\documentclass[11pt, a4paper, reqno, english]{amsart}
\usepackage[margin=3cm]{geometry}

\author{Malte Borken}

\title[Quantitative rigidity of submanifolds]{A characterization of submanifolds of $\RR^{m \times n}$ satisfying optimal rigidity estimates}

\usepackage[english]{babel}

\usepackage{amsmath}
\usepackage{amssymb}
\usepackage{amsthm}
\usepackage{esint} 
\usepackage{mathtools}
\mathtoolsset{showonlyrefs}
\DeclarePairedDelimiter\abs{\lvert}{\rvert}
\DeclarePairedDelimiterXPP\Ln[1]{\mathcal{L}^n}(){}{#1}
\DeclarePairedDelimiter\norm{\lVert}{\rVert}

\DeclarePairedDelimiterX{\dualprod}[2]{\langle}{\rangle}{\,#1\, ,\,#2\,}
\DeclarePairedDelimiterX\set[1]\{\}{#1}%

\newcommand{\given}{:\,}

\usepackage[svgnames]{xcolor}
\usepackage[shortlabels]{enumitem}
\setlist[enumerate,1]{label=\textit{(\roman*)}}

\usepackage[T1]{fontenc}
\usepackage[utf8]{inputenc}
\usepackage{csquotes}
\usepackage[pdfpagemode=UseOutlines,colorlinks]{hyperref}

\usepackage{graphicx}

\usepackage[facing=yes,capbesidewidth=sidefil,capposition=beside,capbesideposition={top,outside},capbesidesep=qquad,floatrowsep=qquad]{floatrow}

\usepackage[font=small,labelfont=bf,labelsep=period]{caption}

\usepackage{aliascnt} 
\theoremstyle{plain}
\newtheorem{theorem}{Theorem}
\numberwithin{equation}{section}
\numberwithin{theorem}{section}

\newtheorem*{thmnn}{Theorem}

\newaliascnt{conjecture}{theorem}

\aliascntresetthe{conjecture}

\newaliascnt{proposition}{theorem}
\newtheorem{proposition}[proposition]{Proposition}
\aliascntresetthe{proposition}

\newaliascnt{lemma}{theorem}
\newtheorem{lemma}[lemma]{Lemma}
\aliascntresetthe{lemma}

\newaliascnt{corollary}{theorem}
\newtheorem{corollary}[corollary]{Corollary}
\aliascntresetthe{corollary}

\theoremstyle{remark}

\theoremstyle{definition}

\newaliascnt{definition}{theorem}
\newtheorem{definition}[definition]{Definition}
\aliascntresetthe{definition}

\newaliascnt{remark}{theorem}
\newtheorem{remark}[remark]{Remark}
\aliascntresetthe{remark}

\newaliascnt{example}{theorem}
\newtheorem{example}[example]{Example}
\aliascntresetthe{example}

\renewcommand{\AA}{\mathbb{A}}

\newcommand{\CC}{\mathbb{C}}

\newcommand{\NN}{\mathbb{N}}

\newcommand{\RR}{\mathbb{R}}

\newcommand{\ZZ}{\mathbb{Z}}

\newcommand{\cA}{\mathcal{A}}

\newcommand{\cE}{\mathcal{E}}
\newcommand{\cF}{\mathcal{F}}

\newcommand{\cL}{\mathcal{L}}
\newcommand{\cM}{\mathcal{M}}

\newcommand{\cP}{\mathcal{P}}
\newcommand{\cQ}{\mathcal{Q}}

\usepackage{mathrsfs}

\newcommand{\sP}{\mathscr{P}}

\newcommand{\wsc}{\overset{*}{\rightharpoonup}}

\newcommand{\mres}{\mathbin{\vrule height 1.6ex depth 0pt width 0.13ex\vrule height 0.13ex depth 0pt width 1.3ex}} 
\def\Xint#1{\mathchoice
{\XXint\displaystyle\textstyle{#1}}%
{\XXint\textstyle\scriptstyle{#1}}%
{\XXint\scriptstyle\scriptscriptstyle{#1}}%
{\XXint\scriptscriptstyle\scriptscriptstyle{#1}}%
\!\int}
\def\XXint#1#2#3{{\setbox0=\hbox{$#1{#2#3}{\int}$}
\vcenter{\hbox{$#2#3$}}\kern-.5\wd0}}

\def\dashint{\Xint-}

\newcommand{\del}{\partial}
\newcommand{\eps}{\varepsilon}

\newcommand{\ddt}{\frac{\mathrm{d}}{\mathrm{dt}}}

\newcommand{\Step}[1]{\medskip  \noindent \emph{Step #1.}}

\renewcommand{\skew}{\mathrm{skew}}
\newcommand{\sym}{\mathrm{sym}}

\newcommand{\son}{{\mathrm{SO}(n)}}
\DeclareMathOperator{\GL}{GL}
\newcommand{\conf}{{\mathrm{CO}_+(n)}}
\newcommand{\RRn}{{\mathbb{R}^n}}
\newcommand{\RRN}{{\mathbb{R}^N}}
\newcommand{\RRm}{{\mathbb{R}^m}}
\newcommand{\RRM}{{\mathbb{R}^M}}

\newcommand{\RRnn}{{\mathbb{R}^{n \times n}}}
\newcommand{\RRmn}{{\mathbb{R}^{m \times n}}}
\newcommand{\RRnnsk}{{\mathbb{R}^{n \times n}_\skew}}
\newcommand{\RRnnsym}{{\mathbb{R}^{n \times n}_\sym}}

\DeclareMathOperator{\supp}{supp}
\DeclareMathOperator{\Lip}{Lip}

\DeclareMathOperator{\dist}{dist}
\DeclareMathOperator{\diam}{diam}

\DeclareMathOperator{\Tr}{trace}

\DeclareMathOperator{\rank}{rank}

\DeclareMathOperator{\Lin}{L}
\DeclareMathOperator{\Gr}{Gr}

\DeclareMathOperator{\diag}{diag}
\DeclareMathOperator*{\osc}{osc}

\newcommand{\dd}{\mathrm{d}}
\newcommand{\dx}{\,\mathrm{d}x}

\newcommand{\dz}{\,\mathrm{d}z}
\newcommand{\dt}{\,\mathrm{d}t}

\newcommand{\loc}{\mathrm{loc}}
\newcommand{\op}{\mathrm{op}}
\newcommand{\Hom}{\mathrm{hom}}

\newcommand{\QR}{{QR}}

\newcommand{\id}{\mathrm{id}}
\newcommand{\idn}{\mathrm{Id}}

\newcommand{\ie}{i.e.\@ }

\begin{document}
\renewcommand{\sectionautorefname}{Section} 
\renewcommand{\subsectionautorefname}{Subsection}

\begin{abstract}
    Let $K \subset \RRmn$ be a compact $C^1$-submanifold with boundary, $p \in (1,\infty)$ and $Q\coloneq (0,1)^n$. We prove that $K$ satisfies an optimal quantitative rigidity estimate of the form
    \begin{equation}
        \norm{Du -  (Du)_{Q}}_{L^p} \leq C \norm{\dist_K(Du)}_{L^p} \qquad \forall u \in W^{1,p}(Q,\RRm),
    \end{equation}
    \emph{if and only if} $K$ satisfies sequential rigidity and for each $A \in K$,
    the  tangent space to $K$ at $A$ satisfies exact rigidity.
    We further prove that this rigidity estimate is stable under small graphical perturbations of $K$.
    The key technical ingredient is proving that outside some small \enquote{bad} set where the maximal function of $\dist_K^p(Du)$ is large,
    the size of the superlevel sets of $\abs{Du - (Du)_Q}$ decays exponentially.
    This is achieved by an adaptation of the John-Nirenberg inequality for $BMO$.
\end{abstract}
\maketitle

\hypersetup{allcolors=MediumBlue}

\section{Introduction}
A classical result known as Liouville's theorem states that the set $\son$ satisfies the following \emph{rigidity} property:
If $\Omega \subset \RRn$ is open, bounded and connected, and  $u:\Omega \to \RRn$ is a smooth vector field satisfying the \emph{differential inclusion} $Du \in \son$, that is,
\begin{equation}
    Du(x) \in \son \qquad \text{for a.e. }x \in \Omega,
\end{equation}
then $u$ must be affine.
This was later improved by \textsc{Reshetnyak} \cite{MR218544}, who proved as a byproduct of his work on quasiconformal mappings that
$\son$ satisfies what is nowadays called \emph{sequential rigidity}, that is, for every sequence of vector fields $(u_j)_{j \in \NN} \subset W^{1,n}(\Omega;\RRn)$ satisfying $\dist_{\son}(Du_j) \to 0$ in $L^n$, there exists a matrix $A \in \son$ and a subsequence
$(u_{k_j})_{j\in \NN}$, such that $Du_{k_j} \to A$ in $L^n$.
In applications, it is often necessary to be able to quantify this convergence.
Substantial progress in this direction was made by \textsc{John} \cite{John61, John72} and later \textsc{Kohn} \cite{Kohn82}, before the problem was finally resolved by \textsc{Friesecke, James and Müller} \cite{FJM} who proved the following
\emph{optimal rigidity estimate}, also known as geometric rigidity:
\begin{thmnn}[{Geometric rigidity, \cite[Thm.~3.1]{FJM}}]
    Let $\Omega\subset \RRn$ be a bounded Lipschitz domain.
    Then there exists a constant $C=C(\Omega)>0$ such that
    \begin{equation} \label{eq:fjm}
        \inf_{R \in \son} \int_\Omega \abs{ D  u - R}^2 \dx \leq C \int_{\Omega} \dist^2_{\son}( D  u) \dx \qquad \forall u \in W^{1,2}(\Omega;\RRn).
    \end{equation}
\end{thmnn}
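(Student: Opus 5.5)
We follow the scheme of Friesecke, James and Müller: first reduce to the harmonic case on interior cubes, then obtain the estimate on a single cube, and finally pass to a general Lipschitz domain by a covering argument. By scaling it suffices to treat a fixed geometry and track how constants scale. Throughout we write $E \coloneq \int_\Omega \dist^2_{\son}(Du)\dx$.

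\Step{1: Truncation.} Using a Lipschitz truncation argument (approximating $u$ in $W^{1,2}$ by maps whose gradients are bounded by a dimensional constant, with the modified set having measure controlled by $E$), we may assume $\norm{Du}_{L^\infty} \le M$. Since $\dist_{\son}(F)\ge \abs{F}-\sqrt n$, the cost of this reduction is bounded by $C E$, and for bounded gradients $\dist^2$ and $\abs{F}$ are comparable away from a ball.

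\Step{2: Harmonic approximation on a cube $Q$.} We linearize the determinant constraint via $\operatorname{cof} F - F$, which vanishes on $\son$ and is $O(\dist_{\son}(F))$ for $\abs{F}\le M$. Since $\operatorname{div}\operatorname{cof} Du = 0$ (Piola identity), the map $u$ satisfies $\Delta u = \operatorname{div}(Du - \operatorname{cof} Du)$, whose right-hand side is the divergence of a field with $L^2$ norm $\lesssim E^{1/2}$. Hence $u = v + w$ with $v$ harmonic and $\norm{Dw}_{L^2}^2 \le C E$.

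\Step{3: Interior rigidity of harmonic maps.} For $v$ harmonic we have $\dist_{\son}(Dv)$ small in $L^2$. Mean value and interior estimates give $\norm{D^2 v}_{L^2(Q')}$ control; we show $\int_{Q'} \abs{D^2 v}^2 \lesssim \int_Q \dist^2_{\son}(Dv)$. Here we use that $\abs{D^2 v}^2$ is controlled via the linearization of $\son$: the function $\dist^2_{\son}(Dv)$ is, up to errors, subharmonic-like. Concretely, the pointwise identity $\Delta \tfrac12\abs{Dv^TDv - I}^2$-type computation, or Liouville's differential argument, bounds $\abs{D^2v}$ by $\abs{D(\text{distance})}$. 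We then apply Poincaré to get $Dv$ close to a constant $A$ with $\dist(A,\son)$ small, and project to $R\in\son$. This gives the estimate on the interior cube $Q'$.

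\Step{4: From interior to the whole domain.} We cover $\Omega$ by a Whitney decomposition, apply Step 3 on each cube with scale-invariant constants to obtain local rotations $R_Q$, and compare rotations of neighbouring cubes. A weighted Poincaré inequality, with weight the distance to $\partial\Omega$, valid for Lipschitz domains, then controls $\int_\Omega \abs{Du - R}^2$ from the local errors, giving \eqref{eq:fjm}.

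The main obstacle is Step 3 combined with Step 4: getting a \emph{linear} bound in $E$ on the interior requires the sharp harmonic estimate, and the boundary layer requires the weighted Poincaré/Whitney argument. Without it one only obtains non-optimal exponents.
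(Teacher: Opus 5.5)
Your outline follows the original Friesecke--James--M\"uller proof. That is a different route from the paper's: there, the estimate is recovered from Theorem~\ref{thm:GeneralRigidity} with $K=\son$, using Reshetnyak's sequential rigidity and Korn's inequality for $T_R\son=R\,\RRnnsk$, and then extended by Proposition~\ref{prop:domain invariance}. Your Steps 1, 2 and 4 are sound, and Step 4 is essentially Lemma~\ref{lem:local to global}. The gap is Step 3, which carries the whole difficulty but is only described heuristically.

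For $f=\dist^2_{\son}$ and harmonic $v$ you have $\Delta f(Dv)=\sum_k D^2f(Dv)[\partial_kDv,\partial_kDv]$. The coercive form $D^2f(R)[G,G]=2\abs{\sym(R^TG)}^2$ holds only at points $R\in\son$. At a general point the error is of order $\dist_{\son}(Dv)\abs{D^2v}^2$, and outside a tubular neighbourhood of $\son$ the function $f$ need not even be differentiable. You only know that $\dist_{\son}(Dv)$ is small in $L^2$, so $\int\varphi\,\dist_{\son}(Dv)\abs{D^2v}^2$ cannot be absorbed into $\int\varphi\abs{D^2v}^2$. The $\Delta\tfrac12\abs{Dv^TDv-I}^2$ computation has the same defect, through the sign-indefinite term $2(Dv^TDv-I):\sum_k(\partial_kDv)^T\partial_kDv$. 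A pointwise bound of $\abs{D^2v}$ by $\abs{D(Dv^TDv)}$ does not close the argument either: $Dv^TDv-I$ is not harmonic, so there is no Caccioppoli inequality turning it into $\int_{Q'}\abs{D^2v}^2\lesssim\int_Q\dist^2_{\son}(Dv)$.

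Two ingredients are missing.

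\emph{Uniform closeness to $\son$.} Split into $E\geq\eta$ and $E<\eta$. For $E\geq\eta$ the estimate is trivial, since $\abs{Du}\leq M$ after truncation. For $E<\eta$ you have $\norm{Dv}_{L^2(Q)}\leq C(M)$, so interior estimates for harmonic functions give $\norm{D^2v}_{L^\infty}\leq C$ on an intermediate cube. Hence $x\mapsto\dist_{\son}(Dv(x))$ is uniformly Lipschitz there. If it exceeded $\delta$ at one point, it would exceed $\delta/2$ on a ball of radius comparable to $\delta$, forcing $\int_Q\dist^2_{\son}(Dv)\gtrsim\delta^{n+2}$. This is impossible once $\eta\ll\delta^{n+2}$.

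\emph{Algebraic coercivity.} Let $T_{ijk}=(R^T\partial_j\partial_kv)_i$, which is symmetric in $(j,k)$, and $S_{ijk}=\tfrac12(T_{ijk}+T_{jik})$. Then $T_{ijk}=S_{ijk}+S_{ikj}-S_{jki}$, hence $\abs{D^2v}^2\leq 9\sum_k\abs{\sym(R^T\partial_kDv)}^2$.

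Choosing $R$ as the nearest rotation to $Dv(x)$, these two facts give $\Delta f(Dv)\geq c\abs{D^2v}^2$ on the intermediate cube for $\delta$ small. Testing with a cutoff then yields the linear interior bound, and either of your two computations works at that point.

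The algebraic identity is exactly the classical proof that $R\,\RRnnsk$ satisfies exact rigidity, i.e.\ condition~\ref{intro(b)}. Harmonic approximation plus interior regularity is what lets you linearize \emph{without} Reshetnyak's theorem. The paper instead takes sequential rigidity as an input and handles the set where linearization fails with the John--Nirenberg-type Lemma~\ref{lem:BMOtypeEst}.
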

The significance of their result from the viewpoint of applications is due to the fact that in continuum mechanics,
one usually assumes that the constitutive laws corresponding to a material satisfy frame indifference,
which roughly expresses that the laws of physics should not depend on the position and orientation of the observer.
This results in a certain $\son$-invariance, for example of the elastic energy density corresponding to an elastic material.
The estimate \eqref{eq:fjm} is then often needed to deal with this invariance.

For instance, \eqref{eq:fjm} has been used to rigorously derive by means of $\Gamma$-convergence models for lower dimensional elastic objects such as plates, shells and rods \cite{FJM, FJM06, FJMM03, MR2018669}, 
see also the surveys \cite{DimRedMüller, Lewicka23}; and further to derive linear elasticity from nonlinear elasticity \cite{DalMasoNegriPercivale}.
There have been numerous generalizations, including mixed growth conditions \cite{CDM13}; inequalities for incompatible fields (i.e.\ vector fields which are not gradient fields) \cite{MR3283554, MR4237631};
and multiwells (i.e.\ sets of the form $\bigcup_{i=1}^N \son A_i$), both in the compatible \cite{MR2217607, MR2684308, MR3089738} and incompatible case \cite{MR2039459, MR2253059}.
Recently, \textsc{Conti, Dolzmann and Müller} \cite{ContiDolzmannMüller24} also obtained a version of \eqref{eq:fjm} for endomorphisms of a compact Riemannian manifold.

While \eqref{eq:fjm} and its generalizations continue to be highly successful in applications, they do not give much insight into exactly which structural properties of the set $\son$ enable it to satisfy the estimate \eqref{eq:fjm}.
This is addressed in the present work.
More precisely, we consider the following notions of rigidity:
\begin{definition} \label{def:rigidity}
    Let $H \subset \RRmn$ be closed and $p\in(1,\infty)$. Denote $Q := (0,1)^n$.
    \begin{enumerate}
        \item We say that $H$ satisfies \emph{exact rigidity} if every map $u \in W^{1,\infty}(Q;\RRm)$ satisfying the differential inclusion $Du \in H$ is affine.
        \item We say that $H$ satisfies \emph{sequential rigidity} if for every weak-* convergent sequence $(u_j)_{j \in \NN} \subset W^{1,\infty}(Q;\RRm)$ satisfying
            \begin{equation}
                \dist_H(Du_j) \to 0 \quad \text{in measure},
            \end{equation}
            there exists $A \in H$ such that
            \begin{equation}
                Du_j \to A \quad \text{in measure}.
            \end{equation}
        \item We say that $H$ satisfies \emph{$L^p$ quantitative rigidity} if there exists $C>0$ such that
            \begin{equation} \label{eq:intro rig est}
                \norm{Du - (Du)_Q}_{L^p} \leq C \norm{\dist_H(Du)}_{L^p} \qquad \forall u \in W^{1,p}(Q;\RRm),
            \end{equation}
            where $(Du)_Q$ denotes the average of $Du$ on $Q$.
            Furthermore, we denote by $C_{QR}$ the \emph{optimal constant} in \eqref{eq:intro rig est}, that is
            \begin{equation} \label{eq:defCQR}
                C_{QR} := C_\QR(H,p) := \sup_{\substack{u\in W^{1,p}(Q;\RRm)\\ \norm{\dist_H(Du)}_{L^p}>0}} \frac{\norm{Du - (Du)_Q}_{L^p}}{\norm{\dist_H(Du)}_{L^p}} \in [1,\infty].
            \end{equation}
    \end{enumerate}
\end{definition}
Note the following two consequences of this definition.
First, for any $p\in(1,\infty)$ and closed set $H \subset \RRmn$ the following implications hold:
\begin{equation}
    L^p \text{ quantitative rigidity} \; \implies \; \text{sequential rigidity} \; \implies \; \text{exact rigidity}.
\end{equation}
Second, by the triangle inequality, it follows that replacing the left-hand side of \eqref{eq:intro rig est} by either $\,\inf_{A \in \RRmn} \norm{Du - A}_{L^p}\,$ or $\,\inf_{A \in H} \norm{Du - A}_{L^p}\,$ results in an equivalent definition of $L^p$ quantitative rigidity.
In particular, \eqref{eq:fjm} states that $\son$ satisfies $L^2$ quantitative rigidity.

The main question we are interested in is the following:
\emph{What are necessary and sufficient conditions for a compact submanifold $K \subset \RRmn$ to satisfy $L^p$ quantitative rigidity?}
Up to now, apart from special cases such as (linear) subspaces, finite sets of matrices, and multiwells of the form $\bigcup_{i=1}^N \son A_i$, there seem to be few results in the direction of answering this question.
We particularly point out the recent work of \textsc{Lamy, Lorent and Peng} \cite{MR4735627}, who showed that a smooth, compact and connected one-dimensional submanifold without boundary
$K \subset \RR^{2 \times 2}$ satisfies $L^2$ quantitative rigidity if and only if there exists a constant $C_*>0$ such that
\begin{equation} \label{eq:LLPcondition}
    \abs{A-B}^2 \leq C_* \abs{\det(A -B)} \qquad \forall A,B \in K.
\end{equation}
In order to understand \eqref{eq:LLPcondition}, it is helpful to first consider conditions that are \emph{necessary} for $L^p$ quantitative rigidity:
If $K \subset \RRmn$ is a compact $C^1$-\emph{submanifold} (possibly with boundary) satisfying $L^p$ quantitative rigidity, then it holds:
\begin{enumerate}[\textit{(\alph*)}]
    \item \label{intro(a)} $K$ satisfies sequential rigidity;
    \item \label{intro(b)} For each $A \in K$, the tangent space $T_AK$ satisfies exact rigidity.
\end{enumerate}
Condition \ref{intro(a)} follows directly from the definition, while \ref{intro(b)} follows by a blow-up type argument, see \autoref{lem:QuantRigImpliesKorn}.
It is well known that for a closed set $H \subset \RRmn$ to satisfy exact rigidity, it is necessary that $H$ does not
contain any \emph{rank-one connections}, that is, there exist no matrices $A,B \in H$ such that $\rank(A-B) = 1$ (see, e.g., \cite{MR1731640}).
Thus, \ref{intro(a)} and \ref{intro(b)} further imply:
\begin{enumerate}[\textit{(\alph*)}$'$]
    \item \label{intro(a)'} $K$ contains no rank-one connections;
    \item \label{intro(b)'} For every $A \in K$, the tangent space $T_AK$ contains no rank-one matrices.
\end{enumerate}
The inequality \eqref{eq:LLPcondition} can be understood as a quantitative way of capturing these weakened conditions \ref{intro(a)'} and \ref{intro(b)'}, and is in fact equivalent to them in the setting of compact one-dimensional submanifolds without boundary of $\RR^{2 \times 2}$.
However, as further shown by \textsc{Lamy, Lorent and Peng} \cite{MR4735627}, the phenomenon that \ref{intro(a)'} and \ref{intro(b)'} even imply the necessary conditions \ref{intro(a)} and \ref{intro(b)} is exclusive to low dimensions, and already fails in $\RR^{3 \times 3}$.

\subsection{Main results}
The first main result states that in general, \ref{intro(a)} and \ref{intro(b)} are still sufficient, thus providing \emph{a full characterization of compact submanifolds of $\RRmn$ satisfying $L^p$ quantitative rigidity}:
\begin{theorem} \label{thm:GeneralRigidity}
    Let $K \subset \RRmn$ be a compact $C^1$-submanifold with boundary
    and $p \in (1,\infty)$.
    Then $K$ satisfies $L^p$ quantitative rigidity if and only if \ref{intro(a)} and \ref{intro(b)} hold.
\end{theorem}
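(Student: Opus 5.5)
The necessity direction is already settled. Condition \ref{intro(a)} follows directly from the definition. Condition \ref{intro(b)} follows from the blow-up argument of \autoref{lem:QuantRigImpliesKorn}. The proof therefore reduces to sufficiency: assuming \ref{intro(a)} and \ref{intro(b)}, we show $C_\QR(K,p)<\infty$.

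\emph{Local input from \ref{intro(b)}.} Exact rigidity of a linear subspace $V=T_AK$ means no nonzero constant-coefficient solution has gradient in $V$. This makes the projection $P_{V^\perp}$ composed with $D$ an elliptic (injective symbol) first-order operator on $\RRm$-valued maps. A Korn-type inequality then follows from standard Calderón--Zygmund theory:
\begin{equation}
\norm{Du-(Du)_Q}_{L^p}\le C_A\norm{\dist_{V}(Du)}_{L^p}.
\end{equation}
By compactness of $K$ and continuity of $A\mapsto T_AK$ (using the $C^1$ hypothesis, with half-spaces at the boundary handled by the same estimate for the affine span), the constants $C_A$ are uniformly bounded. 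Moreover, in a neighbourhood $\{B:|B-A|<\delta\}$ the set $K$ is $o(\delta)$-close to $A+T_AK$. Consequently, if $Du$ takes values $\delta$-close to $A$, then
\begin{equation}
\norm{Du-(Du)_Q}_{L^p}\le C\norm{\dist_K(Du)}_{L^p}+o(1)\,\norm{Du-A}_{L^p},
\end{equation}
and the error term can be absorbed. This is the \emph{small-oscillation regime}.

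\emph{Reduction from large to small oscillation.} Argue by contradiction with a sequence $u_j$ satisfying $\norm{\dist_K(Du_j)}_{L^p}=\eps_j\norm{Du_j-(Du_j)_Q}_{L^p}$, where $\eps_j\to0$. Since $K$ is compact, $\dist_K(F)\ge|F|-C$, so we may assume without loss of generality that $Du_j$ is bounded in $L^p$ after normalization. The difficulty is that sequential rigidity is formulated for $W^{1,\infty}$ sequences.

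First, apply a Lipschitz truncation on the set where $M(\dist_K^p(Du_j))$ is large. Off a bad set whose measure is controlled by $\norm{\dist_K(Du_j)}_{L^p}^p$, this produces $v_j$ with $\norm{Dv_j}_\infty\le C$ and $\dist_K(Dv_j)\to0$ in measure. Sequential rigidity \ref{intro(a)} then gives $Dv_j\to A$ in measure for some $A\in K$.

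Two cases arise:
\begin{itemize}
\item If $\norm{Du_j-(Du_j)_Q}_{L^p}\not\to0$, convergence in measure of $Dv_j$ combined with a uniform $L^p$ bound would yield a contradiction, \emph{provided} we can upgrade measure convergence to $L^p$ control of $Du_j-A$.
\item If $\norm{Du_j-(Du_j)_Q}_{L^p}\to0$, we are in the small-oscillation regime, and the absorption argument above contradicts $\eps_j\to0$.
\end{itemize}

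\emph{Main obstacle.} The main obstacle is this upgrade: converting convergence in measure (or a qualitative smallness on most of $Q$) into a genuine $L^p$ bound with a linear dependence on $\norm{\dist_K(Du)}_{L^p}$. Measure convergence alone allows concentration of $|Du-A|$ on small sets. To control this I will prove a John--Nirenberg-type decay estimate: outside the bad set $\{M(\dist_K^p(Du))>\lambda^p\}$, the level sets satisfy
\begin{equation}
|\{|Du-(Du)_Q|>t\}|\lesssim e^{-ct}.
\end{equation}
The proof runs a Calderón--Zygmund stopping-time decomposition on dyadic subcubes. On each good cube, the local rigidity (sequential rigidity at scale one, via rescaling and compactness, together with the small-oscillation estimate) shows that $Du$ stays close to a single well with a fixed fraction of improvement. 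Iterating this over generations gives geometric decay, exactly as in the classical John--Nirenberg argument for $BMO$.

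Integrating the decay estimate against $t^{p-1}$ and adding the bad-set contribution then gives the $L^p$ bound. I expect the delicate point to be making the per-cube constant uniform. This requires a compactness argument over all cubes and all wells, which is exactly where \ref{intro(a)} and the uniform Korn constants from \ref{intro(b)} are combined.
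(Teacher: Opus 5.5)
\textbf{There is a genuine gap: your case distinction puts the difficulty in the wrong place, so the argument does not close.}

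\emph{Case 1 is vacuous.} Since $\abs{Du_j}\le\dist_K(Du_j)+C$ and $\dist_K(Du_j)\to0$ in $L^p$, the family $\abs{Du_j}^p$ is equi-integrable. Convergence in measure therefore upgrades to $L^p$ convergence by Vitali. This is just \autoref{prop:seqRig}~\ref{prop:seqRig(iv)}, so case 1 never occurs and needs no John--Nirenberg estimate.

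\emph{Case 2 is not the small-oscillation regime.} $\norm{Du_j-(Du_j)_Q}_{L^p}\to0$ gives no pointwise control. The linearization $\dist_{T_AK}(F-A)\le\dist_K(F)+\eps_0\abs{F-A}$ holds only for $\abs{F-A}\le\delta$. Split $Q$ into a superlevel set $S=\{\abs{Du-(Du)_Q}>\tau\}$ and its complement. The Korn inequality for $T_AK$ then leaves (up to lower-order terms) $M\norm{Du-(Du)_Q}_{L^p(S)}$. This is the left-hand side itself, multiplied by $M\ge1$, on a set known only to be small in measure. Markov's inequality cannot absorb it. This is the actual obstacle, and the John--Nirenberg argument has to be used here, inside the absorption, not for the qualitative upgrade.

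\emph{The decay estimate must scale with the quantity being estimated.} Your bound $\lesssim e^{-ct}$ has neither a prefactor nor a specified scale. Integrated against $t^{p-1}$ it yields only an $O(1)$ bound, never one linear in $\norm{\dist_K(Du)}_{L^p}$; that linearity comes solely from the Korn inequality for $T_AK$. With $\zeta$ as in \autoref{def:F-zeta} and $E=\{M_d[\dist_K^p(Du)]>s_0^p\}$, what is needed is
\begin{equation}
\abs{\{x\in Q\setminus E:\ \abs{Du-(Du)_Q}>t\}}\le C_1\Bigl(\frac{\zeta(s)}{\zeta(s_0)}\Bigr)^p\exp\Bigl(-\frac{t}{C_1\zeta(s_0)}\Bigr).
\end{equation}
\begin{itemize}
\item The prefactor comes from running the first Calder\'on--Zygmund step on $\abs{Du-(Du)_Q}^p$, whose integral is at most $\zeta(s)^p$.
\item The decay scale $\zeta(s_0)$ is small by sequential rigidity. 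It is tied to the linearization radius via $(1+L_0)\zeta(s_0)+s_0\le\delta$.
\item Taking $S$ at height $L_0\zeta(s_0)$ gives $\norm{Du-(Du)_Q}_{L^p(S\setminus E)}\le C_2e^{-L_0/C_2}\zeta(s)$, with $C_2=C_2(p)$ independent of $L_0$.
\item The set $E$ contributes $O(s)$. Choosing $L_0$ large depending on $M$ then lets you absorb.
\end{itemize}
The per-cube step needs no compactness over cubes or wells. On a dyadic cube $\tilde Q\not\subset E$, scaling invariance of $\zeta$ gives $\dashint_{\tilde Q}\abs{Du-(Du)_{\tilde Q}}^p\dx\le\zeta(s_0)^p$ directly.

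\emph{Your justification of the Korn step is also incorrect.} An injective real symbol plus Calder\'on--Zygmund theory does not give an estimate on $Q$ without boundary conditions. The trace-free symmetric matrices of \autoref{ex:HarmonicGradients} are $\RR$-elliptic, yet they fail the estimate. What you need is that exact rigidity of $T_AK$ forces a finite-dimensional kernel. That gives $\CC$-ellipticity and hence \eqref{eq:CC-Ell-Korn} via \autoref{prop:CC-ellipticityIsNice}.
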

The second main result states that \emph{$L^p$ quantitative rigidity
is stable under small graphical perturbations}:
\begin{theorem} \label{thm:Stability}
    Let $K \subset \RRmn$ be a compact $C^1$-submanifold with boundary satisfying $L^p$ quantitative rigidity,
    where $p \in (1,\infty)$.
    Then there exist constants $\rho(K,p) > 0$ and $C(K,p) > 0$ such that the following holds.
    If $\Phi:\RRmn \to \RRmn$ is bi-Lipschitz with
    \begin{equation} \label{eq:LipCondonPhi}
        \Lip(\Phi - \id) \leq \rho,
    \end{equation}
    then $\tilde K\coloneq \Phi(K)$ satisfies $L^p$ quantitative rigidity with
    \begin{equation}
        C_{QR}(\tilde K,p) \leq C.
    \end{equation}
    In particular, $\tilde K$ satisfies $L^p$ quantitative rigidity.
\end{theorem}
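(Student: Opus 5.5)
The plan is to verify that $\tilde K$ satisfies conditions \ref{intro(a)} and \ref{intro(b)} and then invoke \autoref{thm:GeneralRigidity}. The uniform constant $C$ does not come for free from this, however. \autoref{thm:GeneralRigidity} as stated gives no control of $C_{QR}$, so I would re-run its proof quantitatively, tracking only a few parameters that are stable under $\Phi$.

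The first step is to identify such a parameter set for $K$. I expect the following to suffice:
\begin{itemize}
\item[(1)] a uniform Korn-type constant $C_T$ for the tangent spaces, i.e.\ $\norm{Dv-(Dv)_Q}_{L^p}\le C_T\norm{\dist_{T_AK}(Dv)}_{L^p}$ for all $A\in K$;
\item[(2)] a scale $r>0$ on which $K$ is $\delta$-close, as a graph, to its tangent planes;
\item[(3)] a compactness modulus expressing sequential rigidity quantitatively.
\end{itemize}
For a subspace, exact rigidity is equivalent to $L^p$ quantitative rigidity, via Fourier multipliers and the absence of rank-one directions. Exact rigidity of a subspace $V$ is equivalent to an ellipticity condition: $\abs{a\otimes\xi}\le c\,\dist(a\otimes\xi,V)$ is violated only by rank-one elements of $V$ in the linear-map sense. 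The resulting constant is controlled by the Mikhlin multiplier norm, which depends continuously on $V$ as long as the ellipticity constant stays bounded. Compactness of $K$ and continuity of $A\mapsto T_AK$ then give (1) with a uniform $C_T$.

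The second step is stability of these quantities. Write $\tilde K=\Phi(K)$. Since $\Phi$ is only Lipschitz, $\tilde K$ need not be $C^1$, so I would not work with $T_{\tilde A}\tilde K$ directly. I would instead work with the approximating planes $V_A:=T_AK$ and use $\abs{\Phi(B)-B-(\Phi(A)-A)}\le\rho\abs{B-A}$. Locally, $\tilde K$ is then within $(\delta+C\rho)\abs{B-A}$ of the affine plane $\Phi(A)+V_A$, so the "almost-plane" hypothesis at scale $r$ used in the proof of \autoref{thm:GeneralRigidity} survives with slightly worse constants. Also $\dist_{\tilde K}(\Phi(X))$ and $\dist_K(X)$ are comparable up to factors $(1\pm\rho)$, and I would use this to transfer (3).

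The main obstacle is that sequential rigidity is genuinely qualitative. A small perturbation could, in principle, create rank-one connections between far-apart points of $K$. I would rule this out as follows. By \ref{intro(a)'}, $K$ has no rank-one connections, and by \ref{intro(b)'} none asymptotically near the diagonal. Compactness of $K\times K$ then gives $\abs{A-B}\le c\,\dist(A-B,\{\text{rank}\le 1\})$. Since $\Phi-\id$ is $\rho$-Lipschitz, this inequality persists for $\tilde K$ when $\rho<1/(2c)$.

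This only handles rank-one connections, while sequential rigidity is stronger. I would therefore argue directly on $L^p$ quantitative rigidity via the paper's main technical mechanism, the John–Nirenberg-type exponential decay of superlevel sets of $\abs{Du-(Du)_Q}$ outside the bad set where the maximal function of $\dist^p(Du)$ is large. That argument uses, at each point and scale, the local linearization constant from (1) and (2) and a single global compactness input. I would try to derive the global input for $\tilde K$ from the one for $K$ by a contradiction and compactness argument. Take $\rho_j\to0$ and $u_j$ with $C_{QR}(\Phi_j(K))\to\infty$. After normalization, the maps $\Phi_j^{-1}$ send $Du_j$-values near $\tilde K_j$ to values near $K$ with error at most $\rho_j\abs{Du_j}$. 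The limit then contradicts either the quantitative rigidity of $K$ or the uniform tangent Korn bound (1).
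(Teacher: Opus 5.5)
Your architecture matches the paper's: you use the planes $T_AK$ in place of tangent spaces of $\tilde K$, check that the one-sided cone condition survives the perturbation (this is \autoref{lem:computeDelta_K}), and feed everything into the John--Nirenberg mechanism of \autoref{prop:TechnicalProp}. However, the one genuinely global input is never supplied. In \autoref{prop:TechnicalProp} that input is the single-scale smallness condition \eqref{eq:cond on s_0} for $\tilde\zeta$, with $s_0,\delta$ depending only on $K$ and $p$. Sequential rigidity of $\tilde K$ is not an input, so your rank-one discussion is beside the point. Only the value of $\tilde\zeta$ at one fixed scale matters, and that is exactly what makes a crude perturbative comparison sufficient.

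Your contradiction argument does not produce this input as stated. If $C_{QR}(\Phi_j(K),p)\to\infty$, the offending $u_j$ necessarily have $\norm{Du_j-(Du_j)_Q}_{L^p}\to 0$ (by compactness of $K$ and the comparison below). So the ``limit'' must be a blow-up of normalized gradients. Contradicting (1) would then require strong $L^p$ convergence of those normalized gradients. That is precisely the concentration on superlevel sets which the John--Nirenberg lemma exists to circumvent, and you have no compactness for it. Also, comparing $\dist_{\tilde K}(\Phi(X))$ with $\dist_K(X)$ is the wrong comparison, since $\Phi^{\pm 1}\circ Du$ is not a gradient.

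The missing observation is a comparison at the same matrix. Translate so that $\Phi(A_0)=A_0$ for some $A_0\in K$. Then $\abs{\Phi(B)-B}\le\rho\abs{B-A_0}\le\rho\diam(K)$ for all $B\in K$. Hence $\abs{\dist_{\tilde K}(F)-\dist_K(F)}\le\rho\diam(K)$ for every $F\in\RRmn$, and therefore
\[
\tilde\zeta(s)\le\zeta\bigl(s+\rho\diam(K)\bigr)\le C_{QR}(K,p)\bigl(s+\rho\diam(K)\bigr).
\]
Take $s_0$ small and then $\rho\le\min\{\eps_0/16,1/2\}$ small. This gives \eqref{eq:cond on s_0} for $\tilde K$ with $\delta=\frac12\inf_{A\in K}\overline\delta_K(A,T_AK,\eps_0/4)>0$. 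Then \autoref{prop:TechnicalProp} together with \autoref{lem:after def zeta}~\ref{lem:after def zeta(ii)} gives the uniform bound on $C_{QR}(\tilde K,p)$ directly, with no compactness argument at all.

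Separately, your justification of (1) is false. Absence of rank-one matrices in $V$ is $\RR$-ellipticity, and Mikhlin multipliers only give the whole-space estimate \eqref{eq:R-ellipticEst}, not an estimate on $Q$ without boundary conditions. Moreover, $\RR$-ellipticity does not imply exact rigidity. For example, the trace-free symmetric matrices $V_1$ of \autoref{ex:HarmonicGradients} contain no rank-one matrix, yet $D\nabla h\in V_1$ for every harmonic $h$. The correct subspace criterion is $\CC$-ellipticity (\autoref{prop:CC-ellipticityIsNice}). Here you need none of this. Since $K$ is assumed quantitatively rigid, the blow-up argument of \autoref{lem:QuantRigImpliesKorn} gives $C_{QR}(T_AK,p)\le C_{QR}(K,p)=:M$ for every $A\in K$ directly. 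This is exactly the uniform constant required in \eqref{eq:max Tan Korn leq M}, with no continuity or compactness argument over $K$.
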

Since \ref{intro(a)} and \ref{intro(b)} do not depend on $p$, we obtain the following immediate consequence of \autoref{thm:GeneralRigidity}.
\begin{corollary} \label{cor:Invariance-p-domain}
    Let $K\subset \RRmn$ be a compact $C^1$-submanifold with boundary and $p,q \in (1,\infty)$.
    Then $K$ satisfies $L^p$ quantitative rigidity if and only if it satisfies $L^q$ quantitative rigidity.
\end{corollary}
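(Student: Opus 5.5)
This is a direct consequence of \autoref{thm:GeneralRigidity}; the plan is to show that the characterization given there does not involve the exponent. Fix $p,q\in(1,\infty)$ and suppose $K$ satisfies $L^p$ quantitative rigidity. The ``only if'' direction of \autoref{thm:GeneralRigidity}, applied with exponent $p$, shows that \ref{intro(a)} and \ref{intro(b)} hold. The ``if'' direction, applied with exponent $q$, then gives $L^q$ quantitative rigidity. Exchanging the roles of $p$ and $q$ yields the converse implication.

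The only point to check is that the two conditions really are independent of the exponent, as the text preceding the corollary asserts. Sequential rigidity in \autoref{def:rigidity} is formulated for weak-* convergent sequences in $W^{1,\infty}$ with convergence in measure, so no $L^p$ norm appears. Exact rigidity of $T_AK$ concerns Lipschitz maps with gradient in $T_AK$, so again no exponent enters. Hence \ref{intro(a)} and \ref{intro(b)} are purely geometric properties of $K$.

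There is therefore no real obstacle. All the analytic difficulty lies in \autoref{thm:GeneralRigidity}, which we take as given for every $p\in(1,\infty)$. We also remark that this argument only transfers the qualitative property: it makes no claim comparing $C_\QR(K,p)$ with $C_\QR(K,q)$.
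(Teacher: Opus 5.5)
Your proposal is correct and is exactly the paper's argument: the paper also obtains this corollary immediately from Theorem~\ref{thm:GeneralRigidity}, because conditions (a) and (b) do not involve the exponent. Your check that sequential rigidity and exact rigidity of $T_AK$ are defined purely through Lipschitz maps is precisely the justification the paper leaves implicit.
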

In the known cases of sets that satisfy $L^p$ quantitative rigidity \cite{FJM, MR2039459, MR2253059, MR4735627}, 
the validity of \ref{intro(a)} and \ref{intro(b)} is readily verified, or at least has been known much longer than the corresponding quantitative result.
Thus, \autoref{thm:GeneralRigidity} presents a way of proving these results with a single, unified approach.

It also enables finding possible new examples of quantitatively rigid sets.
Particularly, in \autoref{prop:null lagrangian ex} we extend the result of \textsc{Lamy, Lorent and Peng} \cite{MR4735627}: 
If a compact, $1$-dimensional $C^1$-submanifold with boundary $K \subset \RRmn$ satisfies an inequality similar to \eqref{eq:LLPcondition}, 
then it must satisfy \ref{intro(a)} and \ref{intro(b)}, and therefore $L^p$ quantitative rigidity.
From this perspective, \autoref{thm:Stability} presents another way of generating a large class of examples.

\subsection{Structure of the paper}
In \autoref{sec:prelim} we introduce some notation, recall the definition and basic properties of gradient Young measures and quasiconvexity, as well as their relation to sequential rigidity,
and discuss the connection between $\CC$-ellipticity and subspaces that satisfy exact rigidity.
Sections \ref{sec:GeneralRigidity} and \ref{sec:Stability} are devoted to the proofs of our main results, \autoref{thm:GeneralRigidity} and \autoref{thm:Stability}, respectively.
In \autoref{sec:Examples} we discuss examples and the relation to other rigidity results.
Finally, in \autoref{sec:generalDomains} we show how the rigidity estimate \eqref{eq:intro rig est} can be extended to  domains other than the unit cube.

\section{Preliminaries and Notation} \label{sec:prelim}
\subsection{General notation} \label{subsec:generalNotation}

Let $X$ be a normed space. For $x \in X$ and $r\geq 0$, we denote
by $B(x,r)$ the open ball of radius $r$ around $x$. If  $X=\RRn$, we set
$Q(x,r)\coloneq x+(-r,r)^n$ and $Q\coloneq (0,1)^n$.
Given two sets $H,G \subset X$, we write $\dist(H,G)\coloneq  \inf_{x\in H,y\in G}\abs{x-y}$, and $\dist_H(x)\coloneq \dist(\set{x},H)$.
The identity map on $X$ is denoted by $\id$.
The letter $C$ is reserved to denote an absolute constant, which is allowed to change from line to line, and depends only on the dimensions $n$ and $m$.
Dependence on any additional parameters such as $K$ and $p$ is expressed by writing $C(K,p)$ or $C=C(K,p)$.
If the need arises to refer to a specific constant, we may label it using subscripts.

Furthermore, for $k \leq N \in \NN$, we write $\Gr(k,\RRN)$ for the Grassmannian manifold of all $k$-dimensional subspaces of $\RRN$.
If $V \in \Gr(k,\RRN)$, we denote by $\pi_V:\RRN \to V$ the orthogonal projection onto $V$.
We endow $\Gr(k,\RRN)$ with the metric
\begin{equation} \label{eq:DefMetricGrassmannian}
    d(V,W) \coloneq  \abs{\pi_V -\pi_W}_\op = \abs{\pi_{V^\perp} -\pi_{W^\perp}}_\op, \qquad V,W \in \Gr(k,\RRN).
\end{equation}

\subsection{Gradient Young measures and quasiconvexity} \label{sec:GY}
Gradient Young measures and quasiconvexity are essential tools for studying the conditions \ref{intro(a)} and \ref{intro(b)} of \autoref{thm:GeneralRigidity}; let us briefly recall the definition and basic properties.

We restrict ourselves to the setting of Lipschitz functions; gradient Young measures as defined below are often also referred to as $L^\infty$-gradient Young measures, in contrast
to gradient Young measures generated by sequences $(Du_j)_{j \in \NN} \subset L^p(\Omega;\RRmn)$, where $p \in [1,\infty)$.

The basic result guaranteeing existence of gradient Young measures is the following, see for example \cite[Thm.~4.1]{Rindl}.
Let $(u_j)_{j \in \NN}\subset W^{1,\infty}(\Omega;\RRm)$ be a bounded sequence. Then there exist a (non relabeled) subsequence
and a family of probability measures
\begin{equation}
    \nu = (\nu_x)_{x\in\Omega} \subset \cP(\RRmn),
\end{equation}
depending measurably on $x$, such that
for all $f \in C(\RRmn)$ it holds
\begin{equation}
    f(Du_j) \wsc \dualprod{\nu_\cdot}{f} \quad \text{in } L^\infty.  \label{eq:DefGradYoungMeasure}
\end{equation}

We call $\nu$ the \emph{gradient Young measure generated by the (sub-)sequence} $(Du_j)_{j \in \NN}$ (or, with slight abuse of notation, \emph{by the sequence $(u_j)_{j \in \NN}$}),
and denote by $GY(\Omega;\RRmn)$ the set of all measures generated in this way.
Testing with $f = \id$ in \eqref{eq:DefGradYoungMeasure},
it follows that $u_j$ converges weakly* in $W^{1,\infty}(\Omega;\RRm)$ to a function $u$
whose gradient is almost everywhere given by the \emph{barycenter of $\nu_x$},
\begin{equation}
    Du(x) = [\nu_x] \coloneq  \int_\RRmn A \,\dd\nu_x(A) \qquad \text{at a.e. } x \in \Omega.
\end{equation}
We call $u$ the \emph{underlying deformation} of $\nu$.

If a gradient Young measure $(\nu_x)_{x \in \Omega} \in GY(\Omega;\RRmn)$ is constant a.e.\ in $x$ (i.e.\ $\nu_x = \nu$ for some $\nu \in \cP(\RRmn)$ and a.e.\ $x \in \Omega$),
we call it a \emph{homogeneous gradient Young measure}, and write $\nu \in GY_\Hom(\RRmn)$.

A notion that is closely related to gradient Young measures is that of quasiconvexity:
We call a locally bounded and measurable function $f:\RRmn \to \RR$ \emph{quasiconvex} if for every $\eta \in W^{1,\infty}_0(Q;\RRm)$ and every $A \in \RRmn$ it holds
\begin{equation} \label{eq:DefQC}
    f(A) \leq \dashint_Q f(A + D\eta) \dx.
\end{equation}
Recall that any quasiconvex function is rank-one convex and therefore locally Lipschitz continuous, see \cite[Lemmas~5.3 \& 5.6]{Rindl}.
Furthermore, every homogeneous gradient Young measure admits a generating sequence with linear boundary values (see, e.g., \cite[Lem.~4.13]{Rindl}), such that \eqref{eq:DefQC} immediately extends to $GY_\Hom(\RRmn)$:
If $f:\RRmn \to \RR$ is quasiconvex and $\nu \in GY_\Hom(\RRmn)$, then it holds
\begin{equation} \label{eq:GYhomQC}
    f([\nu]) \leq \int_\RRmn f \,\dd \nu.
\end{equation}
Conversely, if $f:\RRmn \to \RR$ is a locally bounded and measurable function satisfying \eqref{eq:GYhomQC} for all $\nu \in GY_\Hom(\RRmn)$,
it follows that $f$ is quasiconvex (\cite[Lem.~4.14]{Rindl}).

We call $f:\RRmn \to \RR$ \emph{quasiaffine} if both $f$ and $-f$ are quasiconvex.
It is well known that the determinant, $\det:\RRnn \to \RR$, is quasiaffine.
Furthermore, we say that $f$ \emph{strictly quasiconvex} if there exists a strictly convex
$g:\RRmn \to [0,\infty)$ such that $f-g$ is still quasiconvex.
If the function $g$ is of the form $\lambda \abs{\cdot}^2$, we say that $f$ is \emph{{$\lambda$-uniformly} quasiconvex}.

\subsection{A characterization of sequential rigidity} \label{subsec:charseq}
It is well-known that sequential rigidity can be characterized using gradient Young measures.
To this end, let us introduce yet another notion of rigidity.
\begin{definition}
    Let $H \subset \RRmn$ be closed. We say that $H$ satisfies \emph{homogeneous sequential rigidity} if every homogeneous gradient Young measure supported in $H$ is trivial, that is, if $\nu \in GY_\Hom(\RRmn)$  satisfies
    \begin{equation}
        \supp\nu \subset H,
    \end{equation}
    there exists some $A \in H$ such that
    \begin{equation}
        \nu = \delta_{A}.
    \end{equation}
\end{definition}
Using this, sequential rigidity can now be characterized as follows.
\begin{proposition} \label{prop:seqRig}
    Let $K \subset \RRmn$ be compact and $p\in (1,\infty)$. 
    Then the following are equivalent:
    \begin{enumerate}
        \item \label{prop:seqRig(i)} $K$ satisfies sequential rigidity.
        \item \label{prop:seqRig(ii)} For every gradient Young measure $\nu \in GY(Q;\RRmn)$ satisfying
            \begin{equation}
                \supp\nu_x \subset K \qquad \text{for a.e. } x \in Q,
            \end{equation}
            there exists $A \in K$ such that
            \begin{equation}
                \nu_x = \delta_{A} \qquad \text{for a.e. } x \in Q.
            \end{equation}
        \item \label{prop:seqRig(iii)} $K$ satisfies exact rigidity and homogeneous sequential rigidity.
        \item \label{prop:seqRig(iv)} For every $\eps > 0$,
            there exists $\delta=\delta(\eps,p)>0$ such that for all $u \in W^{1,p}(Q;\RRm)$, it holds
            \begin{equation}
                \norm{\dist_K(Du)}_{L^p} \leq \delta \qquad \implies \qquad \norm{Du - (Du)_Q}_{L^p} \leq \eps.
            \end{equation}
    \end{enumerate}
\end{proposition}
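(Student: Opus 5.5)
I plan to prove the cycle \ref{prop:seqRig(i)} $\Rightarrow$ \ref{prop:seqRig(ii)} $\Rightarrow$ \ref{prop:seqRig(iii)} $\Rightarrow$ \ref{prop:seqRig(ii)} $\Rightarrow$ \ref{prop:seqRig(i)}, and separately \ref{prop:seqRig(iv)} $\Leftrightarrow$ \ref{prop:seqRig(i)}.

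\emph{From (i) to (ii).} Let $\nu$ be generated by a bounded sequence $(u_j)$. Testing \eqref{eq:DefGradYoungMeasure} with $f = \min\{\dist_K, 1\}$ gives $\dist_K(Du_j)\to 0$ in $L^1$, hence in measure. By (i) some subsequence has $Du_j \to A$ in measure. Since the gradients are bounded, the Young measure generated is then $\delta_A$ for a.e.\ $x$.

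\emph{From (ii) to (iii).} For exact rigidity, take $u$ with $Du \in K$. The constant sequence $u_j = u$ generates $\nu_x = \delta_{Du(x)}$, which is supported in $K$, so by (ii) $Du \equiv A$ and $u$ is affine. For homogeneous sequential rigidity, apply (ii) to a homogeneous $\nu$ supported in $K$.

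\emph{From (iii) to (ii).} This is the main step, and I will use the localization principle for gradient Young measures. For a.e.\ $x_0$, blowing up a generating sequence around $x_0$ yields $\nu_{x_0} \in GY_\Hom$ (see \cite{Rindl}). Since $\supp\nu_{x_0}\subset K$, homogeneous sequential rigidity gives $\nu_{x_0} = \delta_{Du(x_0)}$ with $Du(x_0)\in K$, where $u$ is the underlying deformation. Exact rigidity then makes $u$ affine, so $\nu_x = \delta_A$ for a.e.\ $x$.

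\emph{From (ii) to (i).} Let $(u_j)$ converge weakly-* with $\dist_K(Du_j)\to 0$ in measure. Suppose for contradiction that some subsequence keeps $Du_j$ at positive distance in measure from every $A\in K$. Extract a further subsequence generating a Young measure $\nu$. Testing with $\min\{\dist_K,1\}$ shows $\supp\nu_x\subset K$ for a.e.\ $x$, so by (ii) $\nu_x=\delta_A$ a.e. A Young measure that is a Dirac mass everywhere means $Du_j\to A$ in measure, a contradiction. Hence the full sequence converges, and its limit is independent of the subsequence because $A$ is determined by the weak-* limit $u$.

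\emph{From (iv) to (i).} Let $(u_j)$ be bounded in $W^{1,\infty}$ with $\dist_K(Du_j)\to 0$ in measure. By boundedness, $\dist_K(Du_j)\to0$ in $L^p$, so (iv) gives $Du_j-(Du_j)_Q\to0$ in $L^p$. Moreover $(Du_j)_Q$ converges to $(Du)_Q=:A$. Then $\dist_K(A)\le \lim_j \norm{\dist_K(Du_j)}_{L^1}+\norm{Du_j-A}_{L^1}=0$, so $A\in K$.

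\emph{From (i)/(ii) to (iv).} Argue by contradiction: there are $u_j\in W^{1,p}$ with $\norm{\dist_K(Du_j)}_{L^p}\to0$ but $\norm{Du_j-(Du_j)_Q}_{L^p}\ge\eps$. This is the main obstacle, because the sequence is only bounded in $W^{1,p}$, not in $W^{1,\infty}$. I would first subtract affine maps and note that $\norm{Du_j}_{L^p}$ is bounded since $K$ is compact. Then I would apply a Lipschitz truncation (Acerbi–Fusco / Zhang's lemma), replacing $u_j$ by $v_j$ with $\norm{Dv_j}_{L^\infty}\le C(K)$ and $|\{v_j\ne u_j\}|\to0$.

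The truncation needs care on two points. First, $p$-equi-integrability: $|Du_j|^p\le C(1+\dist_K^p(Du_j))$, and since $\dist_K(Du_j)\to0$ in $L^p$, the family $|Du_j|^p$ is equi-integrable. Second, the truncation estimate $\norm{Du_j-Dv_j}_{L^p}\to0$: the truncation level is taken as $\lambda = 2\max_K|\cdot|$ plus a margin, and the bad set $\{M(|Du_j|)>\lambda\}$ is controlled by $M(\dist_K(Du_j))$, whose $L^p$ norm tends to zero.

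With the truncation in hand, $v_j$ is bounded in $W^{1,\infty}$ and $\dist_K(Dv_j)\to0$ in measure. By (i), $Dv_j\to A$ in measure, hence in $L^p$ by boundedness. Therefore $Du_j-(Du_j)_Q\to0$ in $L^p$, contradicting $\norm{Du_j-(Du_j)_Q}_{L^p}\ge\eps$.
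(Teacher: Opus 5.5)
Your proposal is correct and follows the same route as the paper. The paper cites Rindler's Lemma~8.8(ii) for the equivalence of (i)--(iii), which is proved there via the localization principle as you do; it calls (iv)$\Rightarrow$(i) clear; and it obtains (i)$\Rightarrow$(iv) by contradiction together with the Lipschitz truncation of Friesecke--James--M\"uller (Prop.~A.1). One small correction in your last step: subtract only constants, not affine maps, since a linear part would change $\dist_K(Du_j)$. Also note that on $\{\abs{Du_j}>2\max_K\abs{\cdot}\}$ one has $\abs{Du_j}\le 2\dist_K(Du_j)$, so you can apply the truncation estimate of that proposition directly instead of controlling a maximal function of an extension of $u_j$ beyond $Q$.
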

\begin{proof}
A proof of the equivalence of \ref{prop:seqRig(i)}-\ref{prop:seqRig(iii)} can be found in \cite[Lem.~8.8~(ii)]{Rindl}.
Furthermore, it is clear that \ref{prop:seqRig(iv)} implies \ref{prop:seqRig(i)}.
The converse direction follows by contradiction and truncation \cite[Prop.~A.1]{FJM}.
\end{proof}
In practice, it is often easiest to prove \ref{prop:seqRig(iii)}.
A sufficient condition for homogeneous sequential rigidity is the following.

\begin{remark} \label{rem:UnifQCsets}
    Assume that $K \subset f^{-1}(0)$ for some nonnegative strictly quasiconvex function $f:\RRmn \to [0,\infty)$.
    Then $K$ satisfies homogeneous sequential rigidity:
    Indeed, let $g:\RRmn \to \RR$ be strictly convex such that $f-g$ is still quasiconvex.
    Now, if $\nu \in GY_\Hom(\RRmn)$ satisfies
    $\supp \nu \subset K$, it follows
    \begin{equation}
        -g([\nu]) \leq f([\nu]) - g([\nu]) \leq \int_\RRmn f -g \,\dd\nu = \int_\RRmn -g \,\dd\nu.
    \end{equation}
    Since $g$ is strictly convex, this can only happen if $\nu = \delta_A$ for some $A \in \RRmn$.
\end{remark}

\subsection{Rigidity of subspaces and \texorpdfstring{$\CC$-}{}ellipticity} \label{sec:subspaces}

In order to prove one of the directions in \autoref{thm:GeneralRigidity}, we need to pass from the two qualitative conditions \ref{intro(a)} and \ref{intro(b)} to a quantitative statement, $L^p$ quantitative rigidity.
This happens first on the level of subspaces, where exact rigidity implies that a certain linear differential operator satisfies a very strong type of ellipticity, $CC$-ellipticity, which we define below.  
The corresponding $CC$-ellitic estimate is equivalent to the subspace satisfying $L^p$ quantitative rigidity.
This subsection is dedicated to this preliminary result.

Let $\cA$ be an \emph{$l$-homogeneous, linear, constant-coefficient differential operator on $\RRn$ between $\RRm$ and $\RRM$}, that is
\begin{equation} \label{eq:formofA}
    \cA u \coloneq   \sum_{\substack{\alpha \in \NN^n\\ \abs{\alpha }= l} } \cA_\alpha \del^\alpha u, \qquad u:\Omega \subset \RRn \to \RRm,
\end{equation}
where each $\cA_\alpha \in \Lin(\RRm;\RRM)$, and we use the notation
\begin{equation}
    \abs{\alpha}\coloneq \sum_{j \leq n} \alpha_j, \qquad \del^\alpha \coloneq  \prod_{j \leq n}\del_j^{\alpha_j}, \qquad \xi^\alpha \coloneq  \prod_{j \leq n}\xi_j^{\alpha_j},  \qquad \alpha \in \NN^n, \; \xi \in \CC^n.
\end{equation}
For $\xi \in \CC^n$, we denote by
\begin{equation}
    \AA(\xi) \coloneq \sum_{\abs{\alpha}=l} \xi^\alpha\cA_\alpha \in \Lin(\CC^m;\CC^M)
\end{equation}
the \emph{symbol} of $\cA$.
We call $\cA$ \emph{elliptic}, or \emph{$\RR$-elliptic}, if
\begin{equation}
    \AA(\xi)b \neq 0 \qquad \forall \xi \in \RR^n \setminus \set{0},\; b \in  \RR^m \setminus \set{0};
\end{equation}
and \emph{$\CC$-elliptic} if
\begin{equation}
    \AA(\xi)b \neq 0 \qquad \forall \xi \in \CC^n \setminus \set{0},\; b \in  \CC^m \setminus \set{0}.
\end{equation}
Note that $\CC$-ellipticity is a much stronger property than $\RR$-ellipticity and in many cases too restrictive, see  \autoref{ex:HarmonicGradients} below.
For us, the strength of $\CC$-ellipticity lies in the following results due to \textsc{Smith} \cite{MR282046}, see also \cite[Lemma 3.1, Theorem B]{MR4711552}.
\begin{proposition} \label{prop:CC-ellipticityIsNice}
    Let $\cA$ be of the form \eqref{eq:formofA}.
    The following are equivalent:
    \begin{enumerate}
        \item \label{prop:CCellNice(i)} $\cA$ is $\CC$-elliptic.
        \item \label{prop:CCellNice(ii)} There exists a number $L \in \NN$ and an $(L-l)$-homogeneous, linear, constant-coefficient differential operator $\cE$ such that
            \begin{equation}
                \cE \circ \cA = D^L.
            \end{equation}
        \item \label{prop:CCellNice(iii)} For any open and connected set $\Omega \subset \RRn$, the \emph{kernel of $\cA$},
            \begin{equation}
                \ker \cA := \set{v \in W^{l,1}(\Omega;\RRm) \given \cA v = 0},
            \end{equation}
            is finite dimensional.
    \end{enumerate}
    Moreover, if the above are true, then $\ker \cA$ consists of $\RRm$-valued polynomials and does not depend on $\Omega$; furthermore for $p \in (1,\infty)$ it holds
    \begin{equation} \label{eq:CC-Ell-Korn}
        \min_{\eta \in \ker \cA} \norm{u - \eta}_{W^{l,p}} \leq C(\cA,p,\Omega) \norm{\cA u}_{L^p} \qquad \forall u \in W^{l,p}(\Omega;\RRm).
    \end{equation}
\end{proposition}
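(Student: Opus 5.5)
The plan is to prove the cycle \ref{prop:CCellNice(i)} $\Rightarrow$ \ref{prop:CCellNice(ii)} $\Rightarrow$ \ref{prop:CCellNice(iii)} $\Rightarrow$ \ref{prop:CCellNice(i)} and then deduce the estimate \eqref{eq:CC-Ell-Korn}. I take the two ends first, since they are elementary.

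For \ref{prop:CCellNice(iii)} $\Rightarrow$ \ref{prop:CCellNice(i)} I argue by contraposition. Suppose $\AA(\xi)b=0$ for some $\xi\in\CC^n\setminus\set{0}$ and $b\in\CC^m\setminus\set{0}$. Then, for every holomorphic $h:\CC\to\CC$, the functions $x\mapsto \mathrm{Re}\,(h(\xi\cdot x)b)$ and $x\mapsto \mathrm{Im}\,(h(\xi\cdot x)b)$ lie in the kernel, because $\cA(h(\xi\cdot x)b)=h^{(l)}(\xi\cdot x)\AA(\xi)b=0$. Taking $h(z)=z^k$ for $k\in\NN$ produces polynomials of unbounded degree. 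They are nonzero since $\xi\cdot x$ is a nonconstant complex linear form, so infinitely many are linearly independent and $\ker\cA$ is infinite dimensional.

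For \ref{prop:CCellNice(ii)} $\Rightarrow$ \ref{prop:CCellNice(iii)}: if $\cA v=0$ in the distributional sense, then $D^Lv=\cE\cA v=0$. Hence $v$ is a polynomial of degree less than $L$ on the connected set $\Omega$, and the kernel is contained in a finite-dimensional space of polynomials. This also gives the claims that the kernel consists of polynomials and does not depend on $\Omega$: the polynomial kernel elements are the same for every $\Omega$, and any kernel element on $\Omega$ is such a polynomial.

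The substantive step is \ref{prop:CCellNice(i)} $\Rightarrow$ \ref{prop:CCellNice(ii)}, which I would prove algebraically on the symbol side. Consider the module $\mathcal M$ over $R=\CC[\xi_1,\dots,\xi_n]$ generated by the rows of $\AA(\xi)$, that is, the image of the transpose $\AA(\xi)^T:R^M\to R^m$. $\CC$-ellipticity says that $\AA(\xi)$ has trivial kernel on $\CC^m$ for every $\xi\neq0$. Equivalently, the $m\times m$ minors of $\AA$ have no common zero except $\xi=0$. By Hilbert's Nullstellensatz, each $\xi_j^{N}$ then lies in the ideal generated by these minors. Cramer's rule then shows that $\xi_j^{N}R^m\subset\mathcal M$. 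Consequently every monomial $\xi^\beta e_i$ with $\abs{\beta}=L$ large is an $R$-combination of the rows of $\AA$. Taking homogeneous components, the combination can be chosen homogeneous of degree $L-l$. This gives a matrix $\mathbb E(\xi)$ with $\mathbb E(\xi)\AA(\xi)=(\xi^\beta\idn)_{\abs{\beta}=L}$. The coefficients may a priori be complex, but taking real parts is harmless because $\AA$ is real. Translating back to operators gives $\cE\circ\cA=D^L$. I expect this algebraic step, and getting the degree bookkeeping right, to be the main obstacle. It is the content of Smith's result, so I would follow \cite{MR282046} and \cite[Lemma 3.1]{MR4711552}.

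Finally, for \eqref{eq:CC-Ell-Korn}, I would first prove the local coercivity estimate
\begin{equation}
    \norm{u}_{W^{l,p}}\le C\bigl(\norm{\cA u}_{L^p}+\norm{u}_{L^p}\bigr).
\end{equation}
The key tool is the identity $D^Lu=\cE\cA u$, which holds with $L\ge l$. From it I would like to recover the $l$-th derivatives from $\cA u$ by a negative-order Sobolev argument in the style of Ne\v{c}as' lemma, using Calder\'on--Zygmund theory for $p\in(1,\infty)$. This requires a domain on which Ne\v{c}as' inequality holds, so I would first treat Lipschitz (say cubes or balls) domains and note that the constant depends on $\Omega$. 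With the coercivity estimate in hand, I conclude by the standard Peetre--Tartar compactness argument. Suppose there is a sequence with $\min_{\eta}\norm{u_j-\eta}_{W^{l,p}}=1$ and $\cA u_j\to0$. By Rellich compactness, the normalized sequence converges to a limit in $\ker\cA$. The limit has distance $1$ from $\ker\cA$ and yet lies in it, which is a contradiction. Here finite dimensionality of $\ker\cA$ is used to make the minimum attained and the projection continuous.
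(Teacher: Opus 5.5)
Your proposal is correct, and it supplies more than the paper does. The paper gives no proof of this proposition and simply defers to Smith \cite{MR282046} and \cite[Lemma~3.1, Theorem~B]{MR4711552}.

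The algebraic part of your argument is the standard one underlying those sources. For (i)$\Rightarrow$(ii) you apply the Nullstellensatz to the ideal of $m\times m$ minors of $\AA$, use the adjugate identity $\operatorname{adj}(B)\,B=\det(B)\,\mathrm{Id}$ to put $\xi_j^N R^m$ into the row module, and then pass to homogeneous components and real parts. For (iii)$\Rightarrow$(i) you use complex plane waves $h(\xi\cdot x)b$.

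The genuine difference is how you get \eqref{eq:CC-Ell-Korn}. Smith's route, as far as I know, is constructive. On domains star-shaped with respect to a ball, $u$ is written as a polynomial of degree $<L$ plus integral operators acting on $\cA u$. Their kernels have exactly the homogeneity that makes their $l$-th derivatives Calder\'on--Zygmund operators. This gives the estimate directly, together with an explicit projection onto $\ker\cA$, pointwise potential bounds on lower derivatives, and geometric control of the constant.

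Your route (a coercivity inequality via Ne\v{c}as, then Peetre--Tartar compactness) is shorter and self-contained once Ne\v{c}as' inequality is granted, and it works on every bounded Lipschitz domain. Its cost is a non-explicit constant. For the paper this is harmless, because the uniformity of $C_{QR}(V,p)$ in $V$ is obtained separately from \autoref{lem:KornConstCont}.

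Two small points.

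First, the Ne\v{c}as step needs Ne\v{c}as' inequality at negative integer orders; I believe this holds on bounded Lipschitz domains for $p\in(1,\infty)$, with $W^{-k,p}$ the dual of $W^{k,p'}_0$, but you should check a reference. You use it downward for $j=L-1,\dots,l$:
\[
\norm{D^ju}_{W^{l-j,p}}\le C\bigl(\norm{D^{l-1}u}_{L^p}+\norm{D^{j+1}u}_{W^{l-j-1,p}}\bigr).
\]
Here the lower-order term controls $\norm{D^ju}_{W^{l-j-1,p}}$ because $D^ju=D^{j-l+1}D^{l-1}u$. Starting from $\norm{D^Lu}_{W^{l-L,p}}\le C\norm{\cA u}_{L^p}$, which follows from $D^Lu=\cE\cA u$, you obtain $\norm{D^lu}_{L^p}\le C(\norm{\cA u}_{L^p}+\norm{u}_{W^{l-1,p}})$. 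Ehrling's lemma, or the compactness of $W^{l,p}\hookrightarrow W^{l-1,p}$ used directly in the Peetre--Tartar step, then closes the argument.

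Second, your restriction to bounded Lipschitz domains is necessary, not merely convenient. For $\cA=D$ the estimate is the Poincar\'e inequality, which fails on $\RRn$ and on bounded but irregular connected domains. Likewise, the claim that $\ker\cA$ does not depend on $\Omega$ needs $\Omega$ bounded, since nonzero polynomials do not lie in $W^{l,1}(\RRn)$. The paper only uses the case $\Omega=Q$.
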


\begin{definition}
    Let $V \in \Gr(k,\RRmn)$, and recall that we denote by $\pi_{V^\perp}:\RRmn \to V^\perp$ the orthogonal projection onto $V^\perp$.
    We associate with $V$ the first order, homogeneous, linear, constant-coefficient differential operator
    \begin{align}
        \cA^V u \coloneq  \pi_{V^\perp}Du,
    \end{align}
    whose symbol is given by
    \begin{equation}
        \AA^V(\xi)b=  \pi_{V^\perp}(b \otimes \xi).
    \end{equation}
    We call $V \in \Gr(k,\RRmn)$ \emph{$\RR$-elliptic} (resp.\@ \emph{$\CC$-elliptic}) if $\cA^V$ is $\RR$-elliptic (resp.\@ $\CC$-elliptic).
\end{definition}
Note that $u \in W^{1,1}_\loc(\Omega;\RRm)$ satisfies the differential inclusion $Du \in V$ if and only if it solves $\cA^V u = 0$.
Assume now that $V \in \Gr(k,\RRmn)$ satisfies exact rigidity. 
Then
\begin{equation}
    \ker \cA^V = \set{(x \mapsto Ax + b )\given A \in V, \; b \in \RRm},
\end{equation}
and in particular $\ker \cA^V$ is finite dimensional.
By \autoref{prop:CC-ellipticityIsNice}, it follows that $\cA^V$ is $\CC$-elliptic and satisfies \eqref{eq:CC-Ell-Korn}.
Since $\abs{\cA^V u} = \dist_V(Du)$, this is equivalent to $V$ satisfying $L^p$ quantitative rigidity for any $p \in (1,\infty)$.
Let us summarize this for later reference.
\begin{corollary} \label{cor:Subspace-QuantIffExact}
    Let $V \in \Gr(k,\RRmn)$ and $p \in (1,\infty)$. Then $V$ satisfies $L^p$ quantitative rigidity if and only if it satisfies exact rigidity.
\end{corollary}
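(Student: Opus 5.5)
The plan is to prove the two implications separately, using \autoref{prop:CC-ellipticityIsNice} as the bridge between the qualitative and the quantitative statement.

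\emph{Quantitative rigidity implies exact rigidity.} This direction is elementary. Suppose $V$ satisfies $L^p$ quantitative rigidity and let $u \in W^{1,\infty}(Q;\RRm)$ satisfy $Du \in V$ a.e. Since $Q$ is bounded, $u \in W^{1,p}(Q;\RRm)$, and $\dist_V(Du)=0$ a.e. The estimate \eqref{eq:intro rig est} then forces $Du = (Du)_Q$ a.e. on the connected set $Q$, so $u$ is affine.

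\emph{Exact rigidity implies quantitative rigidity.} Here I follow the discussion preceding the statement.
\begin{enumerate}
    \item I first identify $\ker \cA^V$ on $Q$ for $l=1$, where by definition $\ker\cA^V = \set{v\in W^{1,1}(Q;\RRm) \given Dv \in V \text{ a.e.}}$. The goal is to show that this equals $\set{x\mapsto Ax+b \given A\in V,\, b\in\RRm}$, which has dimension at most $\dim V + m$.
    \item This is the main obstacle. Exact rigidity is only assumed for $W^{1,\infty}$ maps, whereas the kernel consists of $W^{1,1}$ maps.
    \begin{enumerate}
        \item To bridge the gap, take $v \in W^{1,1}(Q;\RRm)$ with $\pi_{V^\perp}Dv=0$ and a compactly contained subcube $Q'$.
        \item Mollification commutes with the constant-coefficient operator, so $v_\eps = v*\varphi_\eps$ is smooth on $Q'$ and $\cA^V v_\eps = 0$ there.
        \item Exact rigidity applies on cubes by translation and scaling invariance: if $w$ is defined on $Q'$, then $x\mapsto w(a+rx)$ is defined on $Q$ and its gradient stays in $V$. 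Hence $v_\eps$ is affine on $Q'$.
        \item Affine maps form a closed finite-dimensional space and $v_\eps\to v$ in $W^{1,1}(Q')$, so $v$ is affine on $Q'$.
        \item Exhausting $Q$ by such subcubes shows that $v$ is affine on $Q$, with gradient in $V$.
    \end{enumerate}
    \item Consequently $\ker\cA^V$ is finite dimensional for $\Omega=Q$. The characterization in \autoref{prop:CC-ellipticityIsNice}\ref{prop:CCellNice(iii)} is stated for every open connected $\Omega$. To apply it cleanly, I note that the same mollification argument works on any open connected $\Omega$: the map is locally affine on small cubes, hence globally affine by connectedness. So $\ker \cA^V$ is finite dimensional for all such $\Omega$, and $\cA^V$ is $\CC$-elliptic.
    \item By \eqref{eq:CC-Ell-Korn} with $\Omega = Q$, $l=1$, for each $u\in W^{1,p}(Q;\RRm)$ there is an affine $\eta(x) = Ax+b$ with $A\in V$ such that
    \begin{equation}
        \norm{Du - A}_{L^p} \le \norm{u-\eta}_{W^{1,p}} \le C\norm{\pi_{V^\perp}Du}_{L^p}.
    \end{equation}
    \item Since $V$ is a subspace, $\abs{\pi_{V^\perp}Du} = \dist_V(Du)$ pointwise. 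Moreover,
    \begin{equation}
        \norm{Du - (Du)_Q}_{L^p} \le 2\norm{Du - A}_{L^p},
    \end{equation}
    because averaging is a contraction in $L^p(Q)$ and $\abs{Q}=1$. Combining these gives \eqref{eq:intro rig est}.
\end{enumerate}

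The only step needing care is step 2, the passage from $W^{1,\infty}$ exact rigidity to triviality of the $W^{1,1}$ kernel. The mollification-plus-rescaling argument handles it.
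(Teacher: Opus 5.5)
Your proposal is correct and follows the paper's route. Both arguments identify $\ker \cA^V$ with the affine maps $x \mapsto Ax+b$, $A \in V$, deduce $\CC$-ellipticity and \eqref{eq:CC-Ell-Korn} from \autoref{prop:CC-ellipticityIsNice}, use $\abs{\pi_{V^\perp}Du} = \dist_V(Du)$, and treat the converse direction as immediate. Your mollification-and-rescaling step, which passes from exact rigidity for $W^{1,\infty}$ maps to the $W^{1,1}$ kernel on arbitrary connected domains, fills in a detail the paper leaves implicit.
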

\begin{remark} \label{rem:RR-Ellipticity}
    In comparison, $\RR$-elliptic subspaces can be characterized as follows, see \cite[Lem.~2.7]{MR1731640}.
    For $V \in \Gr(k,\RRmn)$ the following are equivalent:
    \begin{enumerate}
        \item $V$ contains no rank-one matrices.
        \item $V$ is $\RR$-elliptic.
        \item For every $p \in (1,\infty)$ there exists a constant $C(V,p)>0$ such that
            \begin{equation} \label{eq:R-ellipticEst}
                \norm{Du}_{L^p} \leq C \norm{\dist_V(Du)}_{L^p} \qquad \forall u \in W^{1,p}(\RRn;\RRm).
            \end{equation}
        \item Every homogeneous gradient Young measure supported in $V$ is a Dirac measure.
        \item Every solution to the differential inclusion $Du \in V$ is smooth (as a matter of fact, even analytic).
    \end{enumerate}
    For our purposes, the estimate \eqref{eq:R-ellipticEst} would not be strong enough, as we cannot afford to make restrictions on the boundary data of the functions considered.
\end{remark}

We end the subsection with a few examples and remarks.
It is well-known that the \emph{skew-symmetric matrices},
\begin{equation}
    \RRnnsk \coloneq \set{A \in \RRnn \given A = -A^T} \in \Gr(n(n-1)/2,\RRnn),
\end{equation}
satisfy exact rigidity.
The corresponding quantitative statement given by \autoref{cor:Subspace-QuantIffExact} is known as \emph{Korn's inequality}.

The following two examples show that $\RR$-ellipticity, $\CC$-ellipticity and exact rigidity are generally not equivalent for subspaces.
\begin{example} \label{ex:HarmonicGradients}
    Let $n\geq 2$ and define
    \begin{equation}
        V_1 \coloneq  \set{A \in \RRnnsym \given \Tr A = 0} \in \Gr\bigg(\frac{n(n+1)}{2}-1,\RRnn\bigg).
    \end{equation}
    It is easily seen that $Du \in V_1$ is equivalent to $u$ locally being the gradient of a harmonic function (and in particular smooth).
    Thus, by \autoref{rem:RR-Ellipticity}, it follows that $V_1$ is $\RR$-elliptic.
    However, \autoref{prop:CC-ellipticityIsNice} shows that $V_1$ is not $\CC$-elliptic, since the space of gradients of harmonic functions defined on $\RRn$ is infinite dimensional.
    Alternatively, this can also be checked on the level of the symbol.
\end{example}
\begin{example} \label{ex:TangentspaceMoebius}
    Let $n \geq 3$ and set
    \begin{equation}
        V_2 \coloneq  V_1^\perp = \RRnnsk + \RR \idn \in \Gr\bigg(\frac{n(n-1)}{2}+1,\RRnn\bigg).
    \end{equation}
    Then $V_2$ is $\CC$-elliptic, but does not satisfy exact rigidity.
    Indeed, it can be checked (see, e.g., \cite[Ch.~3, §2.2]{MR1326375}) that $\ker \cA^{V_2}$ is given by all polynomials $\eta$ of the form
    \begin{equation}
        \eta(x) = Bx + \lambda x + a + b\abs{x}^2 -2 (x \cdot b)x, \qquad B \in \RRnnsk,\, \lambda \in \RR,\, a,b \in \RRn.
    \end{equation}

\end{example}

Nevertheless, there is one case where $\RR$-ellipticity, $\CC$-ellipticity and exact rigidity are in fact equivalent.
\begin{lemma} \label{lem:1d-Subspace}
    Let $V = \RR A \subset \RRmn$ be a $1$-dimensional subspace. Then $V$ satisfies exact rigidity if and only if $\rank A > 1$.
\end{lemma}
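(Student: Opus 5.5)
Both implications reduce to the scalar ODE identity $Du = \lambda A$ with $\lambda$ a real-valued function, and the two cases are separated by the rank of $A$ (with $A \neq 0$ tacitly assumed, since $V$ is $1$-dimensional).

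\emph{Necessity} ($\rank A = 1$ forces failure of exact rigidity). Write $A = a \otimes b$ with $a \in \RRm\setminus\set{0}$ and $b \in \RRn\setminus\set{0}$. For any nonaffine Lipschitz $g:\RR \to \RR$, for instance $g(t) = \abs{t - c}$ with $c$ chosen so that the kink meets $Q$, set
\begin{equation}
    u(x) \coloneq g(b \cdot x)\,a .
\end{equation}
Then $Du(x) = g'(b\cdot x)\, a \otimes b \in V$ almost everywhere, and $u$ is not affine on $Q$. Hence exact rigidity fails.

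\emph{Sufficiency} ($\rank A \geq 2$ gives exact rigidity). Let $u \in W^{1,\infty}(Q;\RRm)$ with $Du = \lambda A$ a.e., where $\lambda \in L^\infty(Q)$; here $\lambda$ is measurable because $\lambda = \dualprod{Du}{A}/\abs{A}^2$. We must show that $\lambda$ is constant. The compatibility condition $\del_j \del_k u = \del_k \del_j u$ holds in the distributional sense, and componentwise it reads
\begin{equation}
    A_{ik}\,\del_j \lambda = A_{ij}\,\del_k \lambda \qquad \text{in } \mathcal D'(Q), \quad \forall i,j,k .
\end{equation}
In other words, for each row $r_i = (A_{ik})_k$ of $A$, the distributional gradient satisfies $r_i \otimes D\lambda = D\lambda \otimes r_i$, so $D\lambda \wedge r_i = 0$. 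Since $\rank A \geq 2$, we can pick two linearly independent rows $r_1, r_2$.

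To conclude that $D\lambda = 0$, I will mollify: with $\lambda_\eps = \lambda * \varphi_\eps$ on a slightly smaller cube, the same linear relations hold pointwise, so $D\lambda_\eps(x)$ is parallel both to $r_1$ and to $r_2$. This forces $D\lambda_\eps \equiv 0$, so $\lambda_\eps$ is constant on the smaller cube (which is connected). Letting $\eps \to 0$ shows that $\lambda$ is a.e. constant on $Q$. Therefore $Du$ is constant, and $u$ is affine because $Q$ is connected.

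The only delicate point is justifying the distributional compatibility condition for merely Lipschitz $u$. This is immediate, since mixed distributional derivatives of $u$ commute, and $\del_k(\lambda A_{ij}) = A_{ij}\del_k\lambda$. An alternative route is to observe that $\cA^V$ is $\CC$-elliptic and use \autoref{prop:CC-ellipticityIsNice}, but the direct argument above is simpler.
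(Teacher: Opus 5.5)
Your proof is correct, and it takes a somewhat different, coordinate-free route from the paper's.

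The paper writes out only sufficiency. It applies linear changes of variables in target and domain to normalize $A$ to the $2\times 2$ identity; since this maps the cube to a parallelepiped, it also invokes the fact that exact rigidity does not depend on the domain. It then argues by separation of variables: $Du$ diagonal forces $u_1=u_1(x_1)$ and $u_2=u_2(x_2)$, so $\lambda=\del_1u_1(x_1)=\del_2u_2(x_2)$ is constant. Your relation $A_{ij}\del_k\lambda=A_{ik}\del_j\lambda$ is the invariant form of the same mechanism: $D\lambda$ must be parallel to every row of $A$, and in the normalized coordinates those rows are $e_1,e_2$. Because you work on $Q$ with arbitrary $A$, you need neither the normalization nor the remark about domains. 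You also give the necessity direction explicitly via $u=g(b\cdot x)a$, which the paper leaves to the general fact that exact rigidity excludes rank-one connections.

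Two small remarks.
\begin{enumerate}
\item The mollification is unnecessary. The constant-coefficient linear algebra applies directly to the distribution $D\lambda$: write $D\lambda=r_1S=r_2S'$ with scalar distributions $S,S'$, and test against a linear functional vanishing on $r_2$ but not on $r_1$. This gives $D\lambda=0$ in $\mathcal D'(Q)$ immediately.
\item Your suggested alternative via \autoref{prop:CC-ellipticityIsNice} would not suffice on its own. $\CC$-ellipticity only yields a finite-dimensional kernel consisting of polynomials, not of affine maps; \autoref{ex:TangentspaceMoebius} is a $\CC$-elliptic subspace without exact rigidity. You would still need your compatibility argument to show that the polynomial $\lambda$ is constant.
\end{enumerate}
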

\begin{proof}
Assume that $\rank A > 1$. A suitable transformation and the fact that exact rigidity does not depend on the domain allow us to reduce to the case $m=n=2$ and $A= \idn$.
Now assume $u \in W^{1,\infty}(Q;\RR^2)$ satisfies $Du \in V$. Then, since $Du$ is diagonal, it follows that $u_j = u_j(x_j)$, $j = 1,2$. Hence,
\begin{equation}
    Du(x) =
    \begin{pmatrix}
        \del_1u_1(x_1) & 0\\
        0 & \del_2 u_2(x_2)
    \end{pmatrix} = \lambda(x)\idn,
\end{equation}
which implies that $Du$ must be constant.
\end{proof}

\section{Proof of {\autoref*{thm:GeneralRigidity}}} \label{sec:GeneralRigidity}
The aim of this section is to prove \autoref{thm:GeneralRigidity}.
To this end, we start by introducing some notation and collecting a few preliminary lemmas.
\begin{definition}[$\overline \delta_K$ and $\underline \delta_K$] \label{def:delta_K}
    Let $N,k \in \NN$ and let $K \subset \RRN$ be a closed set.
    Then for $a \in K$, $V \in \Gr(k,\RRN)$ and $\eps > 0$, we set $\overline \delta_K(a,V,\eps)$ to be
    the largest $\overline \delta \geq 0$ such that
    \begin{equation} \label{eq:UnifTangEst}
        \begin{gathered}
            \dist_V(x-a) \leq \dist_K(x) + \eps\abs{x-a}\\
        \end{gathered} \qquad \forall x \in B(a,\overline \delta).
    \end{equation}
    We also set $\underline \delta_K(a,V,\eps)$ to be the largest $\underline \delta \geq 0$ such that
    \begin{equation} \label{eq:ReverseTangEst}
        \dist_K(x) \leq \dist_V(x-a) + \eps\abs{x-a} \qquad \forall x \in B(a,\underline \delta).
    \end{equation}
\end{definition}
\begin{remark} \label{rem:after bar delta}
    Alternatively, $\overline \delta_K(a,V,\eps)$ can be characterized as follows.
    For $x \in \RRN$, denote $r(x)\coloneq (\dist_V(x-a)-\eps\abs{x-a})_+$.
    Then for $\delta > 0$, the condition
    \begin{equation}
        \overline \delta_K(a,V,\eps) \geq \delta
    \end{equation}
    is equivalent to $K \cap U = \emptyset$, where
    \begin{equation}
        U \coloneq  \bigcup_{x \in B(a,\delta)}B(x,r(x)),
    \end{equation}
    with the convention $B(x,0)=\emptyset$.
    Consider the cone
    \begin{equation}
        I(V,\eps,a)\coloneq  r^{-1}(0) = \set{x \in \RRN \given \dist_V(x-a) \leq \eps\abs{x-a}}.
    \end{equation}
    A short computation shows that
    \begin{equation}
        B(a,\delta) \setminus I(V,\eps,a) \subset U \subset B(a,2\delta) \setminus I(V,\eps,a),
    \end{equation}
    and thus,
    \begin{equation}
        K \cap B(a,2\delta) \subset I(V,\eps,a) \quad \Rightarrow \quad \overline \delta_K(a,V,\eps) \geq \delta \quad \Rightarrow \quad K \cap B(a,\delta) \subset I(V,\eps,a).
    \end{equation}
    A similar characterization is holds for $\underline \delta_K(a,V,\eps)$.
\end{remark}
\begin{figure}[t]
    \ffigbox
    {\caption{A sketch of the sets described in \autoref{rem:after bar delta}.}}
    {\includegraphics[trim={5.3cm 8.6cm 10.7cm 3.5cm},clip,width=0.6\textwidth]{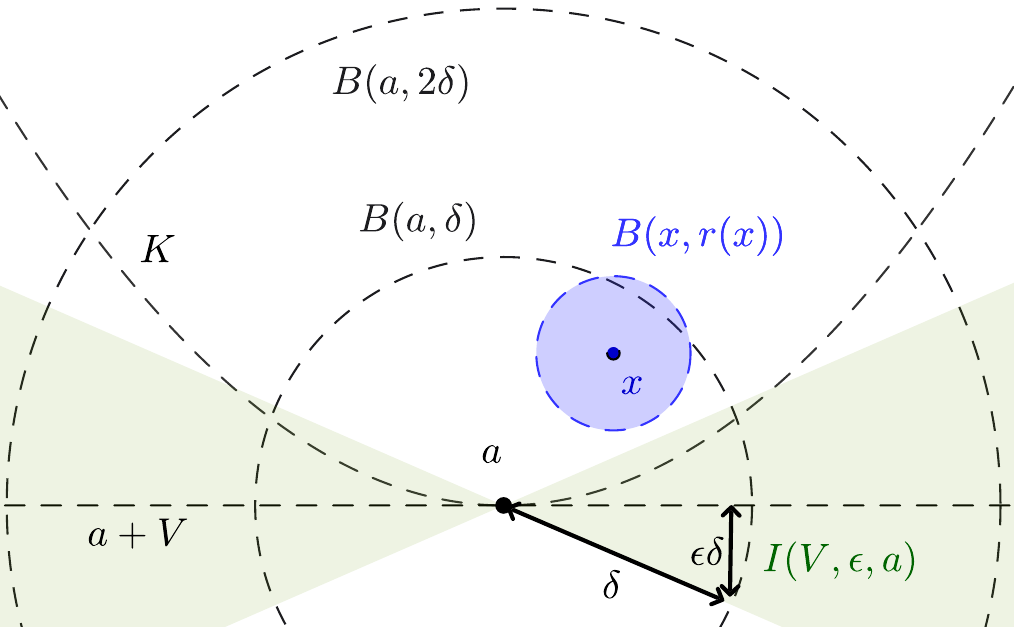}}
\end{figure}
From this point of view, the reason for working with compact $C^1$-submanifolds with boundary is that they behave well with respect to the quantities $\overline \delta_K$ and $\underline \delta_K$.
This is the content of the following elementary lemma, whose proof we omit.
\begin{lemma} \label{lem:PropertiesMFLD}
    Let $K \subset \RRN$ be a $k$-dimensional $C^1$-submanifold with boundary.
    Then the following hold.
    \begin{enumerate}
        \item The tangent space to $K$ at a point  $a \in K$ can be characterized as the unique $k$-dimensional subspace $V \in \Gr(k,\RRN)$ such that \label{lem:PropertiesMFLD(i)}
            \begin{equation} \label{eq:CharTanSpace}
                \overline \delta_K(a,V,\eps) >0 \qquad \forall \eps >0.
            \end{equation}
        \item If $K$ is compact, then the bound in \eqref{eq:CharTanSpace} is uniform in $a \in K$, that is, \label{lem:PropertiesMFLD(ii)}
            \begin{equation}
                \inf_{a \in K} \overline \delta_K(a,T_aK,\eps) > 0 \qquad \forall \eps >0.
            \end{equation}
        \item The (relative) interior points of $K$ (\ie those not lying on the boundary) are exactly those points $a \in K$, for which, in addition to \eqref{eq:CharTanSpace}, it holds \label{lem:PropertiesMFLD(iii)}
            \begin{equation}
                \underline \delta_K(a,T_aK,\eps) >0 \qquad \forall \eps > 0.
            \end{equation}
        \item The tangent bundle $TK:K \to \Gr(k,\RRN)$, interpreted as the map $a \mapsto T_aK$, is continuous. \label{lem:PropertiesMFLD(iv)}
    \end{enumerate}
\end{lemma}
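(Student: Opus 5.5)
The plan is to work in local graph coordinates and read off each item from the definitions of $\overline\delta_K$ and $\underline\delta_K$ in \eqref{eq:UnifTangEst} and \eqref{eq:ReverseTangEst}, using \autoref{rem:after bar delta} to pass between those inequalities and the cone condition $K\cap B(a,\delta)\subset I(V,\eps,a)$.

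\emph{Existence in (i).} Fix $a\in K$ and $V=T_aK$. Near $a$, $K$ is (a relative neighbourhood in) the graph of a $C^1$ map $g$ over a half-ball or a ball in $V$, with $g(0)=0$ and $Dg(0)=0$. Hence for every $\eps>0$ there is $\delta>0$ such that every $y\in K\cap B(a,2\delta)$ satisfies $\dist_V(y-a)\le \eps\abs{y-a}$. By \autoref{rem:after bar delta} this cone inclusion gives $\overline\delta_K(a,V,\eps)\ge\delta>0$. The same conclusion also follows directly: for $x\in B(a,\delta)$, choose a nearest point $y\in K$ with $\abs{x-y}=\dist_K(x)\le\abs{x-a}$, and use
\[
\dist_V(x-a)\le\abs{x-y}+\dist_V(y-a)\le \dist_K(x)+2\eps\abs{x-a}.
\]

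\emph{Uniqueness in (i).} Suppose $W\in\Gr(k,\RRN)$ satisfies $\overline\delta_K(a,W,\eps)>0$ for all $\eps$. Apply \eqref{eq:UnifTangEst} at points $x\in K$, where $\dist_K(x)=0$. Since $K$ contains curves through $a$ with any velocity in the inward half-space of $V$ (or in all of $V$ at interior points), dividing by $\abs{x-a}$ and letting $x\to a$ gives $\dist_W(v)\le\eps\abs v$ for all $v$ in a half-space of $V$. Letting $\eps\to0$ gives $v\in W$ for all such $v$. A half-space spans $V$, so $V\subset W$, and equality of dimensions gives $V=W$.

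\emph{Item (iii).} At an interior point, for $x$ near $a$ write $x-a=v+w$ with $v\in V$ and $w\perp V$. The point $a+v+g(v)$ lies in $K$, so
\[
\dist_K(x)\le\abs{w-g(v)}\le \dist_V(x-a)+\eps\abs{x-a}
\]
for $\abs v$ small. Hence $\underline\delta_K>0$. At a boundary point, take $x=a-t\nu$, where $\nu\in V$ is the inward normal of $\partial K$. Then $\dist_V(x-a)=0$, but $\dist_K(x)\ge ct$ for small $t$, because $K$ lies near the half-space $\{v\cdot\nu\ge -o(\abs v)\}$. This violates \eqref{eq:ReverseTangEst} once $\eps<c$.

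\emph{Item (iv).} Continuity of $a\mapsto T_aK$ is standard. In a chart, $T_aK$ is the image of the differential of a $C^1$ parametrization, and the map sending a rank-$k$ matrix to the orthogonal projection onto its range is continuous, since $\pi=M(M^TM)^{-1}M^T$. This gives continuity in the metric \eqref{eq:DefMetricGrassmannian}.

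\emph{Item (ii).} This is the main obstacle, because the pointwise $\delta$ from (i) must be made uniform. Use compactness together with the uniform continuity of $Dg$ on finitely many charts. For $a,y\in K$ close, the mean value inequality in a chart gives $\dist_{T_aK}(y-a)\le \omega(\abs{y-a})\abs{y-a}$, where $\omega$ is a modulus of continuity of $TK$ from (iv), taken uniform on the compact set $K$. Pairs that are far apart, outside a Lebesgue number of the chart cover, are not needed, since only $y\in B(a,2\delta)$ matters. Choosing $\delta$ with $\omega(2\delta)\le\eps$ and below the Lebesgue number makes the cone inclusion hold uniformly in $a$, and \autoref{rem:after bar delta} again concludes.
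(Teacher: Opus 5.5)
The paper calls this lemma elementary and omits its proof, so there is no argument of the paper to compare yours against. Your proof is correct and is the standard one: a local graph representation over $T_aK$, the cone criterion of \autoref{rem:after bar delta}, curves through $a$ for uniqueness, continuity of $M\mapsto M(M^TM)^{-1}M^T$, and compactness plus the mean value inequality for uniformity.

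A few points could be stated more precisely, though none is a gap.
\begin{itemize}
\item At a boundary point, $K$ is locally a graph over a $C^1$-diffeomorphic image of a half-ball in $T_aK$, not over a flat half-ball. Your boundary argument in (iii) only uses that this domain lies in $\{v\cdot\nu\ge -o(\abs{v})\}$, so it is unaffected.
\item Your direct nearest-point estimate gives the inequality with $2\eps$ instead of $\eps$. This is harmless, and the cone criterion of \autoref{rem:after bar delta} gives $\eps$ itself. The nearest point exists because $K$ is closed, which \autoref{def:delta_K} presupposes.
\item In (ii) the estimate should read $\dist_{T_aK}(y-a)\le C\,\omega(C\abs{y-a})\abs{y-a}$. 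The constant $C$ comes from bi-Lipschitz bounds of finitely many charts restricted to compact convex parameter sets. Convexity is what makes the straight segment in parameter space available and keeps the path length comparable to $\abs{y-a}$.
\end{itemize}
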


The following lemma will be used several times throughout this and the following sections.
Recall that for $H \subset \RRmn$ closed and  $p \in (1,\infty)$, we denote by $C_{QR}(H,p) \in [1,\infty]$ the optimal constant in the quantitative rigidity estimate \eqref{eq:intro rig est}.
\begin{lemma} \label{lem:KornConstCont}
    Let $k \leq mn$.
    Then for every $p \in (1,\infty)$ the map
    \begin{equation}
        \Gr(k,\RRmn) \ni V \longmapsto \frac{1}{C_{QR}(V,p)} \in [0,1]
    \end{equation}
    is $2$-Lipschitz continuous with respect to the metric $d$ defined in \eqref{eq:DefMetricGrassmannian}.
    In particular, $C_{QR}:\Gr(k,\RRmn) \to [1,\infty]$ is continuous, and the set of $L^p$ quantitatively rigid subspaces is open in $\Gr(k,\RRmn)$.
\end{lemma}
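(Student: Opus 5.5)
The plan is to rewrite $1/C_{QR}(V,p)$ as an infimum of ratios, and to compare the ratios for two subspaces $V,W$ using one and the same test function $u$. Directly from \eqref{eq:defCQR},
\begin{equation}
    \frac{1}{C_{QR}(V,p)} = \inf\set[\Big]{\frac{\norm{\dist_V(Du)}_{L^p}}{\norm{Du-(Du)_Q}_{L^p}} \given u \in W^{1,p}(Q;\RRm),\ \norm{Du-(Du)_Q}_{L^p}>0},
\end{equation}
with the convention $1/\infty = 0$. This reformulation needs two checks. First, maps with $\dist_V(Du)\equiv 0$ and non-constant $Du$ give ratio $0$ here, matching $C_{QR}=\infty$. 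Second, maps with constant $Du$ are irrelevant to both formulas. By symmetry it then suffices to show $1/C_{QR}(V,p) \le 1/C_{QR}(W,p) + 2d(V,W)$.

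The basic pointwise estimate is $\dist_V(A) = \abs{\pi_{V^\perp}A}$, which gives
\begin{equation}
    \abs{\dist_V(A)-\dist_W(A)} \le \abs{(\pi_{V^\perp}-\pi_{W^\perp})A} \le d(V,W)\abs{A}.
\end{equation}
The difficulty is that $\abs{A}=\abs{Du}$ is not controlled by $\abs{Du-(Du)_Q}$. I would deal with this by a normalization adapted to $W$.

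Fix $u$ admissible for $W$ and set $M \coloneq (Du)_Q = M_W + M_\perp$, with $M_W\in W$ and $M_\perp \in W^\perp$. Replacing $u$ by $u - M_W x$ changes neither $\dist_W(Du)$ (since $\dist_W$ is invariant under translations by elements of $W$) nor $Du-(Du)_Q$. So I may assume $M = M_\perp \in W^\perp$. Then $M = (\pi_{W^\perp}Du)_Q$, and since $\abs{Q}=1$, Jensen gives
\begin{equation}
    \abs{M}\le \norm{\dist_W(Du)}_{L^p}, \qquad\text{hence}\qquad \norm{Du}_{L^p}\le \norm{Du-M}_{L^p}+\norm{\dist_W(Du)}_{L^p}.
\end{equation}
Combining this with the pointwise estimate yields
\begin{equation}
    \norm{\dist_V(Du)}_{L^p} \le (1+d)\norm{\dist_W(Du)}_{L^p} + d\,\norm{Du-M}_{L^p}, \qquad d = d(V,W).
\end{equation}

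Dividing by $\norm{Du-(Du)_Q}_{L^p}$ gives $\frac{1}{C_{QR}(V,p)} \le (1+d)\cdot(\text{ratio of } u \text{ for } W) + d$, since the left-hand side is at most the $V$-ratio of $u$. Taking the infimum over $u$ gives $\frac{1}{C_{QR}(V,p)} \le (1+d)\frac{1}{C_{QR}(W,p)} + d \le \frac{1}{C_{QR}(W,p)} + 2d$. The last step uses $1/C_{QR}(W,p)\le 1$, which holds because $C_{QR}\ge 1$.

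Exchanging $V$ and $W$ gives the $2$-Lipschitz bound. Continuity of $C_{QR}$ into $[1,\infty]$ follows because $t\mapsto 1/t$ is a homeomorphism $[0,1]\to[1,\infty]$. The set of $L^p$ quantitatively rigid subspaces is $\set{V \given 1/C_{QR}(V,p)>0}$, which is open as the preimage of an open set under a continuous map. The only real obstacle is the mean-value normalization via $M_W$, which controls $\abs{Du}$ by the two quantities appearing in the ratio.
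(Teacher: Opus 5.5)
Your proof is correct and follows essentially the same route as the paper. Both rewrite $1/C_{QR}$ as an infimum, both subtract the in-subspace component of $(Du)_Q$ so that $\norm{Du}_{L^p}$ is bounded by $\norm{Du-(Du)_Q}_{L^p}$ plus the distance term, and both then use $\abs{(\pi_{V^\perp}-\pi_{W^\perp})A}\le d(V,W)\abs{A}$ together with $1/C_{QR}\le 1$. The only cosmetic difference is that you keep an arbitrary test function and pass to the infimum at the end, whereas the paper fixes an $\eps$-near-optimizer with $\norm{Du-(Du)_Q}_{L^p}=1$.
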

\begin{proof}
Let $p \in (1,\infty)$ be fixed.
By homogeneity, we may write
\begin{equation}
    \frac{1}{C_{QR}(V)} \coloneq \frac{1}{C_{QR}(V,p)} =\inf_{\substack{u \in W^{1,p}(Q;\RRm)\\ \norm{Du - (Du)_Q}_{L^p}= 1}} \norm{\pi_{V^\perp }Du}_{L^p}.
\end{equation}
Now let $\eps >0$. Then we can find $u \in W^{1,p}(Q;\RRm)$ with $\norm{Du - (Du)_Q}_{L^p}= 1$ such that
\begin{equation} \label{eq:BdVperpDu}
    \norm{\pi_{V^\perp }Du}_{L^p} \leq \frac{1}{C_{QR}(V)} + \eps \leq 1 + \eps;
\end{equation}
without loss of generality we may additionally assume that $\pi_V((Du)_Q) = 0$.
For $W \in \Gr(k,\RRmn)$ it follows
\begin{equation}
    \begin{split}
    \frac{1}{C_{QR}(W)} & \leq  \norm{\pi_{W^\perp}Du}_{L^p} \leq \norm{\pi_{V^\perp}Du}_{L^p} + \norm{(\pi_{W^\perp} - \pi_{V^\perp})Du}_{L^p} \\
    &\leq                              \frac{1}{C_{QR}(V)} + \eps + \norm{(\pi_{W^\perp} - \pi_{V^\perp})Du}_{L^p}.
    \end{split}
\end{equation}
Further, since $\pi_V(Du)_Q = 0$ and by \eqref{eq:BdVperpDu}
\begin{equation}
    \norm{Du}_{L^p} \leq \norm{Du - (Du)_Q}_{L^p} +  \norm{\pi_{V^\perp}(Du)_Q}_{L^p} \leq 2 + \eps;
\end{equation}
so that by definition of the metric on $\Gr(k,\RRmn)$ it holds
\begin{equation}
    \norm{(\pi_{W^\perp} - \pi_{V^\perp})Du}_{L^p} \leq d(V,W)(2 + \eps).
\end{equation}
Hence,
\begin{equation}
    \frac{1}{C_{QR}(W)} \leq \frac{1}{C_{QR}(V)} + \eps +  d(V,W)(2 + \eps).
\end{equation}
As $V,W \in \Gr(k,\RRmn)$ and $\eps >0$ were arbitrary, this concludes the proof.
\end{proof}
Combining the preceding lemma with \autoref{lem:PropertiesMFLD}~\ref{lem:PropertiesMFLD(iv)}, we obtain the following.
\begin{corollary} \label{cor:Tangent space const cont}
    Let $K \subset \RRmn$ be a $k$-dimensional $C^1$-submanifold with boundary and $p \in (1,\infty)$.
    Then the map
    \begin{equation}
        K \ni A \longmapsto C_{QR}(T_AK,p) \in [1,\infty]
    \end{equation}
    is continuous.
\end{corollary}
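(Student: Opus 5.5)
The plan is to write the map as a composition $A \mapsto T_AK \mapsto C_{QR}(T_AK,p)$ and use continuity of each piece. By \autoref{lem:PropertiesMFLD}~\ref{lem:PropertiesMFLD(iv)}, the tangent bundle $K \ni A \mapsto T_AK \in \Gr(k,\RRmn)$ is continuous with respect to the metric $d$ from \eqref{eq:DefMetricGrassmannian}. By \autoref{lem:KornConstCont}, the map $V \mapsto 1/C_{QR}(V,p)$ is $2$-Lipschitz on $\Gr(k,\RRmn)$, with values in $[0,1]$. Composing, the map $A \mapsto 1/C_{QR}(T_AK,p)$ is continuous from $K$ into $[0,1]$.

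It remains to pass from $1/C_{QR}$ back to $C_{QR}$. The map $t \mapsto 1/t$ is a homeomorphism from $[0,1]$ onto $[1,\infty]$, with the usual convention $1/0=\infty$ and the order topology (equivalently, the one-point compactification topology at $+\infty$) on $[1,\infty]$. Composing the continuous map above with this homeomorphism gives the continuity of $A \mapsto C_{QR}(T_AK,p)$ as a $[1,\infty]$-valued map.

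There is no real obstacle. The only points needing care are that the dimension $k$ of $T_AK$ is constant on $K$, which holds since $K$ is a $k$-dimensional manifold, so all tangent spaces lie in the single Grassmannian $\Gr(k,\RRmn)$ where \autoref{lem:KornConstCont} applies. One also has to state the topology on $[1,\infty]$ explicitly, since the value $\infty$ is allowed.
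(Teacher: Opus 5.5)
Your proposal is correct and matches the paper's argument: the corollary is obtained by composing the continuous tangent map $A \mapsto T_AK$ from \autoref{lem:PropertiesMFLD}~\ref{lem:PropertiesMFLD(iv)} with the $2$-Lipschitz map $V \mapsto 1/C_{QR}(V,p)$ from \autoref{lem:KornConstCont}. Your explicit remark that $t \mapsto 1/t$ is a homeomorphism $[0,1] \to [1,\infty]$ just spells out the step the paper compresses into the phrase ``in particular, $C_{QR}$ is continuous''.
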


\subsection{Proof of Necessity} \label{subsec:Necessity}
In this subsection we prove the necessity of the conditions \ref{intro(a)} and \ref{intro(b)} in \autoref{thm:GeneralRigidity}.
Let $p \in (1,\infty)$ and let $K \subset \RRmn$ be a $C^1$-submanifold with boundary satisfying $L^p$ quantitative rigidity.
Since $K$ is compact, this immediately implies that $K$ satisfies sequential rigidity, that is, \ref{intro(a)} holds.

Therefore, it only remains to show \ref{intro(b)}, that is, $T_AK$ satisfies exact rigidity for each $A \in K$.
We achieve this by proving directly that $T_AK$ satisfies $L^p$ quantitative rigidity using the following blow-up type argument.

\begin{lemma} \label{lem:QuantRigImpliesKorn}
    Let $K \subset \RRmn$ be a $C^1$-submanifold with boundary and $p \in (1,\infty)$.
    Then for every $A \in K$ it holds
    \begin{equation} \label{eq:QuantRigImpliesKorn}
        C_{QR}(T_AK,p) \leq C_{QR}(K,p).
    \end{equation}
\end{lemma}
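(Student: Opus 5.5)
The plan is a blow-up argument. If $C_{QR}(K,p)=\infty$ there is nothing to prove, so assume it is finite and fix $A\in K$ with $V\coloneq T_AK$. Take any $u\in W^{1,p}(Q;\RRm)$ with $\norm{\dist_V(Du)}_{L^p}>0$. We would like to test $u_t(x)\coloneq Ax+tu(x)$, $t>0$ small, in the estimate for $K$. Then
\begin{equation}
    \norm{Du_t-(Du_t)_Q}_{L^p}=t\norm{Du-(Du)_Q}_{L^p},
\end{equation}
so it suffices to show
\begin{equation}
    \limsup_{t\to0}\tfrac1t\norm{\dist_K(A+tDu)}_{L^p}\le\norm{\dist_V(Du)}_{L^p}.
\end{equation}
This works when $A$ is an interior point and $Du$ is bounded. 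Indeed, by \autoref{lem:PropertiesMFLD}~\ref{lem:PropertiesMFLD(iii)}, for any $\eps>0$ and $\abs{tDu}<\underline\delta_K(A,V,\eps)$ we get
\begin{equation}
    \dist_K(A+tDu)\le t\dist_V(Du)+\eps t\abs{Du}.
\end{equation}
Dividing by $t$, then letting $t\to0$ and $\eps\to0$, gives the claim.

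To remove the boundedness assumption I will use density. Lipschitz maps are dense in $W^{1,p}$ on the cube (for instance by reflection and mollification). Both sides of the ratio are continuous in $W^{1,p}$, since $\dist_V$ is $1$-Lipschitz. The denominator also stays positive along an approximating sequence, because it converges to the positive number $\norm{\dist_V(Du)}_{L^p}$. So it suffices to treat $u\in W^{1,\infty}$.

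The boundary points are what I expect to be the main obstacle, since $\underline\delta_K$ may vanish there: $K$ only covers a half-space of $V$ near $A$. The first idea is to use continuity of $A\mapsto C_{QR}(T_AK,p)$ from \autoref{cor:Tangent space const cont}. Interior points are dense in $K$, and the interior estimate gives $C_{QR}(T_BK,p)\le C_{QR}(K,p)$ for every interior $B$. Passing to the limit $B\to A$ yields the bound at boundary points. This is also valid when the limit is $\infty$, since then the inequality forces finiteness. One caveat is that if $K$ has a $0$-dimensional component, then $T_AK=\{0\}$ and $A$ is an isolated interior point with $\underline\delta_K>0$ trivially, so the interior argument covers that case directly.

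As a backup I would avoid continuity altogether. Near a boundary point, the set $K$ contains, up to $o(\abs{x-A})$ errors, the half-space $\{v\in V:\ell(v)\ge0\}$. Shifting the base point to $A+t\lambda N$, with $N$ an inward tangent vector and $\lambda$ large compared with $\norm{Du}_{L^\infty}$, puts $A+t\lambda N+tDu$ inside this half-cone region. The same estimate then holds with the error term $\eps t(\abs{Du}+\lambda)$, which still vanishes in the limit $\eps\to0$.
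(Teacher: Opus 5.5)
Your proposal is correct and follows the paper's proof. The shared steps are the blow-up $x\mapsto Ax+tu(x)$ at interior points controlled by $\underline\delta_K(A,T_AK,\eps)$, density to reduce to bounded $Du$, and passing from interior to boundary points via continuity of $A\mapsto C_{QR}(T_AK,p)$ (\autoref{cor:Tangent space const cont}); the paper argues by contradiction rather than by a direct limit, which is cosmetic, and your half-space backup for boundary points is not needed.
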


\begin{proof}
Note that by \autoref{cor:Tangent space const cont}, it suffices to consider interior points of $K$.
Now assume that \eqref{eq:QuantRigImpliesKorn} fails for an interior point $A \in K$, and denote
$C_* \coloneq  C_{QR}(K,p)$ and $V \coloneq T_AK$.
Then there exist a map $u \in W^{1,p}(Q;\RRm)$ and $\kappa > 0$ such that
\begin{equation}
    1 = \norm{Du - (Du)_Q}_{L^p} \geq (C_*+\kappa) \norm{\dist_V(Du)}_{L^p}.
\end{equation}
Since $C^\infty(\overline Q;\RRm)$ is dense in $W^{1,p}(Q;\RRm)$, we may further assume that
\begin{equation}
    \norm{Du}_{L^\infty} \leq L < \infty.
\end{equation}
Let $\eps >0 $. By \autoref{lem:PropertiesMFLD}~\ref{lem:PropertiesMFLD(iii)}, it holds $\delta\coloneq  \underline \delta_K(A,V,\eps) > 0$, which means that
\begin{equation}
    \dist_K(F) \leq \dist_V(F-A) + \eps\abs{F-A} \qquad \forall F \in B(A,\delta).
\end{equation}
Now, for $\lambda>0$, define $u_\lambda (x)\coloneq  \lambda u(x) + Ax$.
Then $\norm{Du_\lambda - A}_{L^\infty} \leq \lambda L$, so if $\lambda L \leq \delta$, we can conclude
\begin{equation}
    \begin{split}
    \norm{\dist_K(Du_\lambda)}_{L^p} & \leq \norm{\dist_V(Du_\lambda-A)}_{L^p} + \eps \norm{Du_\lambda - A}_{L^p} \\
    & \leq \lambda \norm{\dist_V(Du)}_{L^p} + \eps \lambda L                  \\
    & \leq \lambda\left(\frac{1}{C_* + \kappa } + \eps L   \right).
    \end{split}
\end{equation}
Choosing $\eps$ sufficiently small, we still have
\begin{equation}
    \norm{\dist_K(Du_\lambda)}_{L^p} < \frac{\lambda}{C_*} = \frac{\norm{Du_\lambda - (Du_\lambda)_Q}_{L^p}}{C_*},
\end{equation}
contradicting the choice $C_* = C_{QR}(K,p)$.
\end{proof}
This concludes the proof of necessity.

\subsection{Proof of Sufficiency}
In this subsection we prove the sufficiency of the two conditions in \autoref{thm:GeneralRigidity}.
The main part of the proof is formulated as a separate and slightly more general result, \autoref{prop:TechnicalProp}, which will also be central in \autoref{sec:Stability}.
We begin by reformulating the rigidity estimate \eqref{eq:intro rig est}.
\begin{definition} \label{def:F-zeta}
    Let $K \subset \RRmn$ be closed and let $p \in (1, \infty)$.
    For $s > 0$, we define the \emph{class of admissible functions $\cF(s)$}
    to be all functions $u \in W^{1,p}(Q;\RRm)$
    for which the right-hand side of \eqref{eq:intro rig est} is bounded by $s$; that is,
    \begin{equation} \label{eq:def cS}
        \cF(s) \coloneq  \set[\big]{u \in W^{1,p}(Q;\RRm) \given \norm{\dist_K(Du)}_{L^p} \leq s}.
    \end{equation}
    We further define $\zeta(s)$ to be the supremum over the corresponding left-hand side,
    \begin{equation} \label{eq:def zeta}
        \zeta(s) \coloneq  \sup_{u \in \cF(s)} \norm{Du - (Du)_{Q}}_{L^p}.
    \end{equation}
\end{definition}
From this definition we obtain the following.
\begin{lemma} \label{lem:after def zeta}
    Let $K \subset \RRmn$ be closed, $p \in (1, \infty)$, and let $\zeta$ be given by \eqref{eq:def zeta}.
    \begin{enumerate}
        \item \label{lem:after def zeta(i)}
            Then $K$ satisfies $L^p$ quantitative rigidity if and only if there exists $C>0$ such that
            \begin{equation} \label{eq:zetaleCss}
                \zeta(s) \leq C s \qquad \forall s > 0.
            \end{equation}
        \item \label{lem:after def zeta(ii)} Assume in addition that $K$ is compact, and let $s_0 > 0$. Then \eqref{eq:zetaleCss} holds if there exists $C>0$ such that
            \begin{equation} \label{eq:zetaleCs}
                \zeta(s) \leq C s \qquad \forall s \in (0,s_0).
            \end{equation}
        \item \label{lem:after def zeta(iii)} $K$ satisfies sequential rigidity if and only if $\zeta(s) \to 0$ as $s \to 0$.
        \item \label{lem:after def zeta(iv)} The definition of $\zeta$ is \emph{scaling invariant} in the following sense:
            For any cube $\tilde Q \subset Q$ with sides parallel to the coordinate axes and any $u \in W^{1,p}(\tilde Q;\RRm)$, it holds
            \begin{equation} \label{eq:ScalingInvZeta}
                \left(\dashint_{\tilde Q} \abs{Du - (Du)_{\tilde Q}}^p \dx\right)^\frac{1}{p} \leq \zeta\left(\left[\dashint_{\tilde Q} \dist_K^p(Du) \dx\right]^\frac{1}{p}\right).
            \end{equation}
    \end{enumerate}
\end{lemma}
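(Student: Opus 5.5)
We prove the four items separately; each is short and uses only the definitions together with \autoref{prop:seqRig}.

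\emph{Item \ref{lem:after def zeta(i)}.} If \eqref{eq:intro rig est} holds with constant $C$, then every $u \in \cF(s)$ satisfies $\norm{Du-(Du)_Q}_{L^p} \le C\norm{\dist_K(Du)}_{L^p} \le Cs$, and taking the supremum over $\cF(s)$ gives \eqref{eq:zetaleCss}. Conversely, let $u$ be given. If $\norm{\dist_K(Du)}_{L^p}=s>0$, then $u \in \cF(s)$, so $\norm{Du-(Du)_Q}_{L^p}\le \zeta(s)\le Cs$. If $s=0$, then $u\in\cF(s')$ for every $s'>0$, so the left-hand side is at most $Cs'$ for all $s'>0$ and hence vanishes.

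\emph{Item \ref{lem:after def zeta(ii)}.} This is the only step that needs a bound on large values of $s$, and compactness provides it. Set $R\coloneq \max_{A\in K}\abs{A}$. Since $\abs{F}\le \dist_K(F)+R$ pointwise, we get $\norm{Du}_{L^p}\le s+R$ for every $u\in\cF(s)$ (recall $\abs{Q}=1$). The map $G\mapsto G-(G)_Q$ has operator norm at most $2$ on $L^p$. Hence $\zeta(s)\le 2(s+R)$. For $s\ge s_0$ this gives $\zeta(s)\le 2(1+R/s_0)s$. Taking the maximum of this constant and $C$ proves \eqref{eq:zetaleCss} for all $s>0$.

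\emph{Item \ref{lem:after def zeta(iii)}.} We use \autoref{prop:seqRig}, specifically the equivalence of \ref{prop:seqRig(i)} and \ref{prop:seqRig(iv)}. Statement \ref{prop:seqRig(iv)} says: for every $\eps>0$ there is $\delta>0$ with $\cF(\delta)\subset\set{\norm{Du-(Du)_Q}_{L^p}\le\eps}$, i.e.\ $\zeta(\delta)\le\eps$. Since $\zeta$ is nondecreasing (the classes $\cF(s)$ are nested), this is exactly $\zeta(s)\to0$ as $s\to0$.

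\emph{Item \ref{lem:after def zeta(iv)}.} Let $\tilde Q=x_0+rQ$ and define $v(y)\coloneq r^{-1}u(x_0+ry)$ for $y\in Q$. Then $Dv(y)=Du(x_0+ry)$, and the change of variables turns averages over $\tilde Q$ into integrals over $Q$. Consequently $\norm{\dist_K(Dv)}_{L^p(Q)}=\bigl(\dashint_{\tilde Q}\dist_K^p(Du)\dx\bigr)^{1/p}\eqqcolon t$, and $\norm{Dv-(Dv)_Q}_{L^p(Q)}$ equals the left-hand side of \eqref{eq:ScalingInvZeta}. Since $v\in\cF(t)$, the left-hand side is at most $\zeta(t)$. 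For this it matters that the admissible class contains all $W^{1,p}$ maps with no boundary conditions, so the rescaled function is admissible.
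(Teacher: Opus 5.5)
Your proof is correct and follows the paper's argument item by item: (i) directly from the definitions (including your explicit handling of the case $\norm{\dist_K(Du)}_{L^p}=0$), (ii) via the compactness bound $\zeta(s)\le 2s+2\,\mathrm{const}(K)$, (iii) from the equivalence of (i) and (iv) in \autoref{prop:seqRig}, and (iv) by rescaling $\tilde Q$ to $Q$. The only difference is cosmetic: in (ii) you use $\max_{A\in K}\abs{A}$, whereas the paper compares with a point of $K$ and obtains $\diam(K)$, a translation-invariant constant; the two are otherwise equivalent.
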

\begin{proof}
Item \ref{lem:after def zeta(i)} follows immediately from the definition.
To prove \ref{lem:after def zeta(ii)}, note that for $u \in \cF(s)$ it holds
\begin{equation} \label{eq:use compactness}
    \norm{Du - (Du)_Q}_{L^p} \leq 2\norm{\dist_K(Du)}_{L^p} + 2\diam(K) \leq 2s + 2\diam(K).
\end{equation}
If $s \geq s_0$, the right-hand side can be estimated by $(2 + 2\diam(K) s_0^{-1})s$, and the conclusion follows by taking the supremum over $u$.
Item \ref{lem:after def zeta(iii)} follows from \autoref{prop:seqRig}, and item \ref{lem:after def zeta(iv)} by a change of variables.
\end{proof}

Before stating the more technical \autoref{prop:TechnicalProp}, let us quickly outline the main ideas that are needed to prove \autoref{thm:GeneralRigidity}.
The general strategy is to linearize $\dist_K$ around some suitably chosen point $A \in K$, and to ultimately reduce the rigidity estimate for $K$ to a rigidity estimate for $T_AK$;
the latter follows from condition \ref{intro(b)}.
To this end, we split $Q$ into a superlevel set $S$ of $\abs{Du-(Du)_Q}$, and a sublevel set $Q \setminus S$.
On the sublevel set, $Du$ is sufficiently close to $A$ so that the remainder term from the linearization can be controlled;
the main difficulty lies in showing $\abs{Du-(Du)_Q}^p$ cannot concentrate too much on the superlevel set $S$.
To get some intuition for how this can be achieved, let us for now assume that we have a uniform bound of the form
\begin{equation} \label{eq:hyp unif bd}
    \norm{\dist_K(Du)}_{L^\infty(Q)} \leq s.
\end{equation}
This would directly result in a bound for the $BMO$-seminorm of $Du$: Indeed, by \eqref{eq:ScalingInvZeta} we would have
\begin{equation}
    \dashint_{\tilde Q} \abs{Du -(Du)_{\tilde Q}} \dx \leq \left(\dashint_{\tilde Q} \abs{Du -(Du)_{\tilde Q}}^p \dx\right)^\frac{1}{p} \leq \zeta(s)
\end{equation}
for all cubes $\tilde Q \subset Q$ with sides parallel to the coordinate axes.
By the John-Nirenberg inequality \cite{MR131498}, this would imply that the superlevel sets of $\abs{Du - (Du)_{Q}}$ decay exponentially fast:
\begin{equation} \label{eq:JNlemmaOrg}
    \abs[\big]{\set{x \in Q \given \abs{Du(x) - (Du)_Q}>t}} \leq C \exp\left(-\frac{t}{C \zeta(s)}\right).
\end{equation}
Since \ref{intro(a)} is equivalent to $\zeta(s) \to 0$ as $s \to 0$, such a bound would be enough to control the concentration of $\abs{Du-(Du)_Q}^p$.

In general, we cannot expect the bound \eqref{eq:hyp unif bd} to hold; instead we only know that $u \in \cF(s)$.
The idea is then to, in a sense, make this bound uniform by removing another small \emph{bad set} $E \subset Q$ where the maximal function of $\dist_K^p(Du)$ is large.
This will allow us to still obtain a bound of the type \eqref{eq:JNlemmaOrg} outside $E$ by proving a modified version of the John-Nirenberg lemma.

We now claim that \autoref{thm:GeneralRigidity} follows from the below proposition.
\begin{proposition} \label{prop:TechnicalProp}
    Let $p \in (1,\infty)$.
    For every $M>0$, there exist constants $\eps_0 = \eps_0(p,M) > 0$ and $L_0 = L_0(p,M) > 2$ such that the following holds.
    Let $K \subset \RRmn$ be compact and assume that there exists a family of subspaces
    \begin{equation} \label{eq:def V_A}
        (V_A)_{A \in K} \subset \bigcup_{k \leq mn}\Gr(k,\RRmn),
    \end{equation}
    such that
    \begin{equation} \label{eq:max Tan Korn leq M}
        \sup_{A \in K} C_{QR}(V_A,p) \leq M.
    \end{equation}
    If there exist $\delta>0$ and $s_0 > 0$ such that 
    \begin{equation} \label{eq:def delta}
        \inf_{A \in K} \overline \delta_K(A,V_A,\eps_0) \geq \delta
    \end{equation}
    and
    \begin{equation} \label{eq:cond on s_0}
        (1 + L_0)\zeta(s_0) + s_0 \leq \delta,
    \end{equation}
    then $\zeta$ as defined in \autoref{def:F-zeta} satisfies \eqref{eq:zetaleCs}, where $C = C(p,M,s_0, \diam(K))$. \medskip
\end{proposition}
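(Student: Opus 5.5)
Fix $s\in(0,s_0)$ and $u\in\cF(s)$; the goal is $\norm{Du-(Du)_Q}_{L^p}\le Cs$. The plan is to linearize $\dist_K$ at a point $A\in K$ close to $(Du)_Q$, apply the rigidity estimate for $V_A$ (constant at most $M$ by \eqref{eq:max Tan Korn leq M}), and control the set where $Du$ is far from $A$ by a John--Nirenberg type argument.

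\Step{1 (choice of $A$).} Since $\norm{\dist_K(Du)}_{L^p}\le s$ and $\norm{Du-(Du)_Q}_{L^p}\le\zeta(s)$, we get $\dist_K((Du)_Q)\le \zeta(s)+s$. Choose $A\in K$ with $\abs{(Du)_Q-A}\le\zeta(s)+s$. Set $t_0\coloneq L_0\zeta(s_0)$ and $S\coloneq\set{x\given\abs{Du-(Du)_Q}>t_0}$. On $Q\setminus S$ we have $\abs{Du-A}\le (1+L_0)\zeta(s_0)+s_0\le\delta$ by \eqref{eq:cond on s_0}. By \eqref{eq:def delta} this gives $\dist_{V_A}(Du-A)\le\dist_K(Du)+\eps_0\abs{Du-A}$ there.

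\Step{2 (linearization).} Take $v(x)\coloneq u(x)-Ax$. Then the rigidity estimate for $V_A$ yields
\begin{equation}
\norm{Du-(Du)_Q}_{L^p}\le M\norm{\dist_{V_A}(Dv)}_{L^p}\le M\big(s+\eps_0\norm{Du-A}_{L^p}+\norm{\abs{Du-A}}_{L^p(S)}\big).
\end{equation}
Since $\norm{Du-A}_{L^p}\le 2\norm{Du-(Du)_Q}_{L^p}+s$, choosing $\eps_0\le 1/(4M)$ lets us absorb that term. The task then reduces to proving
\begin{equation}
\norm{Du-(Du)_Q}_{L^p(S)}\le \tfrac{1}{4M}\norm{Du-(Du)_Q}_{L^p}+Cs.
\end{equation}

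\Step{3 (modified John--Nirenberg, the main obstacle).} Let $E\coloneq\set{\mathcal M(\dist_K^p(Du)\mathbf 1_Q)>s_0^p}$ be the bad set, with $\mathcal M$ the dyadic maximal function on $Q$. By weak $(1,1)$ we have $\abs E\le s^p/s_0^p$. On any dyadic subcube $\tilde Q\not\subset E$, the average of $\dist_K^p(Du)$ is at most $s_0^p$, so \eqref{eq:ScalingInvZeta} gives a mean oscillation at most $\zeta(s_0)$. I would then run the Calderón--Zygmund stopping-time proof of John--Nirenberg. Stop on maximal dyadic cubes where the average of $\abs{Du-(Du)_{\text{parent}}}$ exceeds $2^n\cdot 2\zeta(s_0)$. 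Cubes contained in $E$ are discarded into $E$. Iterating $k$ times produces the bound
\begin{equation}
\abs{\set{\abs{Du-(Du)_Q}>Ck\zeta(s_0)}\setminus E}\le 2^{-k}.
\end{equation}
This is the delicate step: discarded cubes must be tracked so that the remaining ones still have good parents, and a correction is needed because $\abs{(Du)_{\tilde Q}-(Du)_Q}$ grows linearly along the chain. Taking $L_0$ large, \autoref{lem:after def zeta}\ref{lem:after def zeta(iii)}-type smallness is not needed; only the exponential decay $\abs{S_t\setminus E}\le C e^{-t/(C\zeta(s_0))}$ for $t\ge t_0$ matters. A sharper form in terms of $\norm{Du-(Du)_Q}_{L^1}$ (from the first step of the stopping time) is preferable, since it gives $\abs{S\setminus E}\lesssim e^{-cL_0}\norm{Du-(Du)_Q}_{L^p}^p/\zeta(s_0)^p$.

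\Step{4 (integrating).} The layer-cake formula over $S\setminus E$ with this decay gives $\int_{S\setminus E}\abs{Du-(Du)_Q}^p\le e^{-cL_0}C\norm{Du-(Du)_Q}_{L^p}^p$. Choosing $L_0=L_0(p,M)$ large makes this at most $(8M)^{-p}\norm{Du-(Du)_Q}^p_{L^p}$.

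On $E$, use Hölder/higher integrability or simply a Whitney/Calderón--Zygmund decomposition of $E$ into maximal cubes $Q_j$ with $\dashint_{Q_j}\dist^p_K\sim s_0^p$. On each $Q_j$ we have $\int_{Q_j}\abs{Du-(Du)_{Q_j}}^p\le \zeta(\cdot)^p\abs{Q_j}$, and \autoref{lem:after def zeta}\ref{lem:after def zeta(ii)} (the compactness bound \eqref{eq:use compactness}) gives this $\le C(1+\diam K)^p s_0^{-p}\int_{Q_j}\dist_K^p(Du)$. Meanwhile $\abs{(Du)_{Q_j}-(Du)_Q}$ is controlled by the parent cube, which lies outside $E$ and is handled as in the previous paragraph. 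Summing gives $\int_E\abs{Du-(Du)_Q}^p\le C(s^p+\ldots)$ with $C=C(p,M,s_0,\diam K)$.

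Combining Steps 2--4 yields $\zeta(s)\le Cs$ on $(0,s_0)$.
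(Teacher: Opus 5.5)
Your overall strategy is the paper's. You take $A$ nearest to $(Du)_Q$ and apply the rigidity estimate for $V_A$ to $u-Ax$, linearizing on $Q\setminus S$. You introduce the dyadic bad set $E$ with $\abs{E}\le (s/s_0)^p$. You run a Calder\'on--Zygmund iteration that discards cubes contained in $E$; every remaining cube then has oscillation at most $\zeta(s_0)$ by scaling invariance. A layer-cake estimate on $S\setminus E$ and absorption finish the argument. Using $\norm{Du-(Du)_Q}_{L^p}$ instead of the paper's $\zeta(s)$ in the prefactor is harmless.

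You rightly saw that the bare bound $2^{-k}$ would leave a term of size about $e^{-cL_0}\zeta(s_0)$, which is neither absorbable nor $O(s)$. Note that the prefactor must carry the $p$-th power. The first-generation stopping cubes are disjoint with $p$-th power averages above $\alpha^p$, where $\alpha\sim\zeta(s_0)$, so their total measure is at most $\alpha^{-p}\norm{Du-(Du)_Q}_{L^p}^p$. A prefactor $\norm{Du-(Du)_Q}_{L^1}/\zeta(s_0)$, as your wording suggests, would not be linear in $\norm{Du-(Du)_Q}_{L^p}$ after the layer cake.

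The step that does not work as written is your treatment of $E$. You cannot control $\abs{(Du)_{Q_j}-(Du)_Q}$ through the parent cube ``as in the previous paragraph''. The stopping time only controls averages over stopping cubes. Chaining through the (good) dyadic ancestors of $Q_j$ costs up to $2^{n/p}\zeta(s_0)$ per generation. Since $\abs{Q_j}\le (s/s_0)^p$, there are at least $\frac{p}{n}\log_2(s_0/s)$ generations, so in general you only get a bound of order $s\log(s_0/s)$.

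No decomposition of $E$ is needed. Pointwise, $\abs{Du-(Du)_Q}\le \dist_K(Du)+\diam(K)+s$, because $(Du)_Q$ lies within $s$ of the convex hull of $K$; this is the compactness bound from \autoref{lem:after def zeta}. Combined with $\abs{E}\le (s/s_0)^p$, it gives $\norm{Du-(Du)_Q}_{L^p(E)}\le 2s+\diam(K)\,s/s_0$, which is exactly how the paper removes $E$. The same reasoning, giving $\abs{(Du)_{Q_j}-(Du)_Q}\le \dashint_{Q_j}\dist_K(Du)\dx+\diam(K)+s$, would also repair your cube-wise version.

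Two minor points:
\begin{itemize}
\item The inequality $\norm{Du-A}_{L^p}\le 2\norm{Du-(Du)_Q}_{L^p}+s$ needs $A$ to be a nearest point to $(Du)_Q$, not merely within $\zeta(s)+s$ of it.
\item Replacing $A$ by $(Du)_Q$ on $S$ uses Markov's bound $\abs{S}\le L_0^{-p}$. This adds a term $M/L_0$ to the coefficient you absorb, which is fine since $L_0$ is large.
\end{itemize}
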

\begin{proof}[Proof that \autoref{prop:TechnicalProp} implies \autoref{thm:GeneralRigidity}]
Let $K \subset \RRmn$ be a compact $k$-dimen\-sio\-nal $C^1$-submanifold with boundary satisfying conditions \ref{intro(a)} and \ref{intro(b)} of \autoref{thm:GeneralRigidity},
that is, $K$ is sequentially rigid and at each $A \in K$, the tangent space $T_AK$ satisfies exact rigidity.
Set
\begin{equation}
    V_A \coloneq  T_AK \in \Gr(k,\RRmn), \qquad A \in K.
\end{equation}
It then suffices to check that equations \eqref{eq:max Tan Korn leq M}, \eqref{eq:def delta} and \eqref{eq:cond on s_0} are satisfied.

By \ref{intro(b)} and \autoref{cor:Subspace-QuantIffExact}, it follows that $T_AK$ satisfies $L^p$ quantitative rigidity for each $A \in K$,
or equivalently,
\begin{equation}
    C_{QR}(T_AK,p) < \infty \qquad \forall A \in K.
\end{equation}
By \autoref{cor:Tangent space const cont} and since $K$ is compact, the map
$A \mapsto C_{QR}(T_AK,p)$ attains a maximum $M \in [1,\infty)$ on $K$, so that \eqref{eq:max Tan Korn leq M} is satisfied.
Further, by \autoref{lem:PropertiesMFLD}~\ref{lem:PropertiesMFLD(ii)}, it follows that the constant $\delta$ in
\eqref{eq:def delta} can be chosen strictly positive.
Finally, by \autoref{prop:seqRig}, sequential rigidity of $K$ is equivalent to
$\zeta(s) \to 0$ as $s \to 0$, so that for $s_0>0$ sufficiently small, \eqref{eq:cond on s_0} is satisfied as well.
\end{proof}

\begin{proof}[Proof of \autoref{prop:TechnicalProp}]
Assume that $K \subset \RRmn$ and $(V_A)_{A \in K} \subset \bigcup_{k \leq mn}\Gr(k,\RRmn)$ satisfy the hypotheses of \autoref{prop:TechnicalProp},
where the constants $\eps_0 = \eps_0(p,M) \in (0,1]$ and $L_0 = L_0(p,M) > 2$ will be specified at the end of the proof.
Recall that by \eqref{eq:def delta} and \autoref{def:delta_K}, for $A \in K$ and $F \in \RRmn$ it holds
\begin{equation} \label{eq:WhatDoesDeltaDo}
    \dist_{V_A}(F-A) \leq \dist_K(F) + \eps_0 \abs{F-A} \qquad  \text{if } \abs{F-A} \leq \delta.
\end{equation}
Now, let $s \in (0,s_0)$ and fix $u \in \cF(s)$. Choose $A \in K$ such that
\begin{equation}
    \dist_K((Du)_{Q}) = \abs{(Du)_{Q} - A}.
\end{equation}
Note that by the triangle inequality
\begin{equation} \label{eq:DuQ0minusA0}
    \abs{(Du)_{Q} - A} \leq \norm{Du - (Du)_{Q}}_{L^p} + \norm{\dist_K(Du)}_{L^p} \leq \zeta(s) + s.
\end{equation}
Consider the superlevel set
\begin{equation} \label{eq:def S}
    S\coloneq  \set{x \in Q \given \abs{Du(x) - (Du)_{Q}} > L_0 \zeta(s_0) },
\end{equation}
outside which we are able to use the linearization inequality \eqref{eq:WhatDoesDeltaDo}.
Indeed, by \eqref{eq:cond on s_0} and \eqref{eq:DuQ0minusA0} it holds for all $x \in Q \setminus S$
\begin{equation}
    \abs{Du(x) - A} \leq \abs{Du(x) - (Du)_{Q}} + \abs{(Du)_{Q} - A}  \leq (1 + L_0)\zeta(s_0) + s \leq \delta,
\end{equation}
where we also used that $\zeta$ is nondecreasing by definition.
Hence, first using the rigidity estimate for $V_A$ and \eqref{eq:max Tan Korn leq M}, then \eqref{eq:WhatDoesDeltaDo} on $Q\setminus S$, it follows
\begin{equation} \label{eq:Korn + linearize}
    \begin{split}
    \norm{Du - (Du)_{Q}}_{L^p(Q)} & \leq M \norm{\dist_{V_A}(Du-A)}_{L^p(Q)}                                            \\
    & \leq                                    M\big(s + \eps_0 \norm{Du - A}_{L^p(Q \setminus S)} +  \norm{Du - A}_{L^p(S)}\big).
    \end{split}
\end{equation}
The idea is to absorb all terms which do not depend linearly on $s$ into the left-hand side;
to this end we need to replace $A$ on the right side again by $(Du)_Q$.
First, we deal with the integral over $S$: Note that by Markov's inequality it holds
\begin{equation} \label{eq:BdonS}
    \abs{S} \leq \left(\frac{\norm{Du - (Du)_{Q}}_{L^p(Q)}}{L_0 \zeta(s_0)}\right)^p \leq \left(\frac{\zeta(s)}{L_0 \zeta(s_0)}\right)^p\leq L_0^{-p},
\end{equation}
so that by \eqref{eq:DuQ0minusA0} (and also using $L_0^{-1}\leq 1$) we can estimate
\begin{equation} \label{eq:Du - A Lp(S)}
    \begin{split}
    \norm{Du - A}_{L^p(S)} \leq \norm{Du - (Du)_{Q}}_{L^p(S)} + L_0^{-1}\zeta(s) + s.
    \end{split}
\end{equation}
Similarly, it follows
\begin{equation} \label{eq:Du - A Lp(Q-S)}
    \eps_0\norm{ Du - A}_{L^p(Q \setminus S)} \leq \eps_0 \bigl(\norm{Du - (Du)_{Q}}_{L^p(Q\setminus S)} + \zeta(s) + s\bigr) \leq 2\eps_0\zeta(s) + s.
\end{equation}
Hence, by \eqref{eq:Korn + linearize}, \eqref{eq:Du - A Lp(S)} and \eqref{eq:Du - A Lp(Q-S)}
\begin{equation} \label{eq:GenRigIntermediateEst}
    \norm{Du - (Du)_{Q}}_{L^p(Q)} \leq M\Bigl(3s + (2\eps_0 + L_0^{-1})\zeta(s) + \norm{Du - (Du)_{Q}}_{L^p(S)}\Bigr).
\end{equation}
While the term $3Ms$ already has the desired form and the term involving $\zeta(s)$ can be absorbed into the left-hand side, the last summand however seems more problematic:
We integrate the same quantity as on the left-hand side, except that it is also multiplied by $M$;
the only leverage we have is that we integrate only over $S$.
It is clear that the estimate \eqref{eq:BdonS} for the size of $S$ given by Markov's inequality is not enough to estimate this term,
since we additionally need to control the growth of $\abs{Du -(Du)_{Q}}$.
As noted before, we achieve this by proving a John-Nirenberg type inequality outside some bad set $E \subset Q$ where the maximal function of $\dist_K^p(Du)$ is large.

To this end, denote
\begin{equation}
    \cQ \coloneq  \set{2^{-k}([0,1]^n + z) \subset Q \given k \in \NN_{\geq 0} \text{ and } \; z \in \ZZ^n}
\end{equation}
the set of closed dyadic subcubes of $Q$ and
consider the corresponding dyadic Hardy-Littlewood maximal function
\begin{equation}
    M_d[\dist_K^p(Du)](x) \coloneq  \sup_{\set{\tilde Q \in \cQ \given x \in \tilde Q}} \dashint_{\tilde Q} \dist_K^p(Du) \dz.
\end{equation}
Now consider the \emph{bad set}
\begin{equation} \label{eq:DefErrorSet}
    E\coloneq  \set[\big]{x \in Q \given M_d[\dist_K^p(Du)](x) > s_0^p} = \bigcup\set[\bigg]{\tilde Q \in \cQ \given \dashint_{\tilde Q} \dist_K^p(Du) \dz > s_0^p}.
\end{equation}
Since every cover of dyadic cubes admits a disjoint subcover, we may estimate
\begin{equation}
    \abs{E} \leq \left(\frac{\norm{\dist_K(Du)}_{L^p(Q)}}{s_0}\right)^p \leq  \left(\frac{s}{s_0}\right)^p.
\end{equation}
Similar to \eqref{eq:use compactness}, it follows
\begin{equation} \label{eq:RemoveErrorSet}
    \norm{Du - (Du)_{Q}}_{L^p(S)} \leq 2s + \frac{2\diam(K)}{s_0}s + \norm{Du - (Du)_{Q}}_{L^p(S \setminus E)}.
\end{equation}
Consider now the modified superlevel sets
\begin{equation} \label{eq:DefTildeS}
    \tilde S_t \coloneq  \set{x \in Q \setminus E \given \abs{Du(x) - (Du)_{Q}}> t}, \qquad t > 0,
\end{equation}
and note that
\begin{equation} \label{eq:S_setminus_E}
    S\setminus E = \tilde S_{L_0\zeta(s_0)}.
\end{equation}
\autoref{prop:TechnicalProp} now follows from the following John-Nirenberg type estimate, whose proof is postponed to the end of the subsection.
\begin{lemma} \label{lem:BMOtypeEst}
    There exists a constant $C_1=C_1(p)>0$ such that the following holds.
    Let $s_0 > 0$, $s \in (0,s_0)$, $u \in \cF(s)$ and let $\tilde S_t$ be given by \eqref{eq:DefTildeS}.
    Then, for $t> C_1\zeta(s_0)$ it holds
    \begin{equation}
        \abs{\tilde S_t }\leq \left(\frac{\zeta(s)}{\zeta(s_0)}\right)^p C_1\exp\left(- \frac{t }{C_1 \zeta(s_0)} \right).
    \end{equation}
\end{lemma}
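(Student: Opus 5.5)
We mimic the Calderón–Zygmund proof of the John–Nirenberg inequality, using the dyadic structure of $\cQ$ and the bad set $E$ to replace the $BMO$ bound. The basic input is \autoref{lem:after def zeta}~\ref{lem:after def zeta(iv)}: for every dyadic cube $\tilde Q \in \cQ$ that is \emph{not} contained in $E$ (equivalently, $\dashint_{\tilde Q}\dist_K^p(Du)\le s_0^p$, and hence the same holds for all dyadic ancestors of $\tilde Q$, since $E$ is a union of dyadic cubes), we have
\begin{equation}
    \left(\dashint_{\tilde Q}\abs{Du-(Du)_{\tilde Q}}^p\dx\right)^{1/p}\le \zeta(s_0).
\end{equation}
So on "good" cubes we have a local $L^p$-oscillation bound $\zeta(s_0)$, which is what John–Nirenberg needs. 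On the whole cube $Q$ we have the sharper bound $\zeta(s)$, which produces the prefactor $(\zeta(s)/\zeta(s_0))^p$.

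\textbf{First stopping time.} Set $f = \abs{Du-(Du)_Q}$ and $\lambda = 2^{n/p+1}\zeta(s_0)$, or a similar multiple. Run a Calderón–Zygmund decomposition of $f^p$ on $Q$ at height $\lambda^p$: take the maximal dyadic cubes $Q_j^1$ with $\dashint_{Q_j^1} f^p > \lambda^p$. Markov gives $\sum_j \abs{Q_j^1}\le \zeta(s)^p/\lambda^p$, which carries the factor $(\zeta(s)/\zeta(s_0))^p$. On the parent $\hat Q$ of $Q_j^1$ we have $\dashint_{\hat Q} f^p \le \lambda^p$. Hence
\begin{equation}
    \abs{(Du)_{Q_j^1}-(Du)_Q}\le \left(\dashint_{Q_j^1} f^p\right)^{1/p}\le 2^{n/p}\lambda.
\end{equation}
Outside $\bigcup_j Q_j^1$, Lebesgue differentiation gives $f\le\lambda$ a.e.

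\textbf{Iteration.} Within each $Q_j^1$ not contained in $E$, repeat the construction with $f_j=\abs{Du-(Du)_{Q_j^1}}$. The good-cube bound gives $\dashint_{Q_j^1} f_j^p\le\zeta(s_0)^p$, so the next generation of cubes $Q^2_{j,i}\subset Q^1_j$ satisfies $\sum_i\abs{Q^2_{j,i}}\le (\zeta(s_0)/\lambda)^p\abs{Q^1_j}\le 2^{-p}\abs{Q^1_j}$. Cubes contained in $E$ are discarded, since $\tilde S_t$ excludes $E$. By induction, after $k$ generations the union $G_k$ of the surviving cubes satisfies
\begin{equation}
    \abs{G_k}\le (\zeta(s)/\zeta(s_0))^p\,2^{-p(k-1)}\lambda^{-p}\zeta(s_0)^p.
\end{equation}
The averages drift by at most $2^{n/p}\lambda+\lambda$ per generation. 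Hence on $Q\setminus(E\cup G_k)$ we have $f\le Ck\lambda$ a.e., i.e. $\tilde S_{Ck\lambda}\subset G_k$. Choosing $k\approx t/(C\lambda)$ gives the exponential decay
\begin{equation}
    \abs{\tilde S_t}\le (\zeta(s)/\zeta(s_0))^p C_1 \exp\bigl(-t/(C_1\zeta(s_0))\bigr)
\end{equation}
for $t>C_1\zeta(s_0)$, with $C_1$ depending on $p$ and $n$ only. The restriction $t>C_1\zeta(s_0)$ ensures $k\ge1$, so the prefactor from the first generation is always present.

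\textbf{Main obstacle.} The delicate point is the bookkeeping with $E$. The oscillation bound is available only on cubes not contained in $E$, but a stopping cube $Q_j^k$ may intersect $E$ without being contained in it. This is fine because of the dyadic structure: if $Q_j^k\not\subset E$, then $\dashint_{Q_j^k}\dist_K^p(Du)\le s_0^p$, since otherwise $Q_j^k$ itself would be one of the cubes making up $E$. So the estimate applies to every cube we continue on. Points of $\tilde S_t$ lying in discarded cubes are in $E$ and do not count. I would also check carefully that the first step needs no good-cube hypothesis: it uses only $u\in\cF(s)$, i.e. $\norm{f}_{L^p(Q)}\le\zeta(s)$.
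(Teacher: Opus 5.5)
Your proposal is correct and is essentially the paper's proof. Both arguments run a Calder\'on--Zygmund stopping-time iteration on dyadic cubes, with the same height at every generation. Both discard cubes contained in $E$ and apply \eqref{eq:ScalingInvZeta} on every surviving cube; this is legitimate precisely because $\tilde Q \nsubseteq E$ forces $\dashint_{\tilde Q}\dist_K^p(Du)\dx \le s_0^p$. In both, the prefactor $(\zeta(s)/\zeta(s_0))^p$ enters only through the first generation. Your height $(2^{n/p+1}\zeta(s_0))^p$, in place of the paper's $(2\zeta(s_0))^p$, makes no difference.

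The only slip is the word \enquote{equivalently} in your first paragraph. A cube with $\dashint_{\tilde Q}\dist_K^p(Du)\dx\le s_0^p$ may still lie inside $E$, because of an ancestor or of its own subcubes. You only ever use the implication from $\tilde Q\nsubseteq E$, so nothing breaks.
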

Indeed, assume that \autoref{lem:BMOtypeEst} holds.
By \eqref{eq:S_setminus_E} and Fubini, we can rewrite the last remaining term in \eqref{eq:RemoveErrorSet} in terms of $\tilde S_t$.
\begin{equation}
    \begin{split}
    \norm{Du - (Du)_{Q}}_{L^p(S \setminus E)}^p &= \int_{\tilde S_{\scriptscriptstyle L_0\zeta(s_0)}}\abs{Du - (Du)_{Q}}^p \dx\\
    &= \int_{0}^\infty pt^{p-1}  \abs[\big]{\tilde S_{\max\set{t , L_0 \zeta(s_0)}}} \dt.
    \end{split}
\end{equation}
Provided that $L_0 \geq C_1$, we can insert the estimate from \autoref{lem:BMOtypeEst}:
\begin{equation}
    \begin{split}
    \int_{0}^\infty pt^{p-1}  \abs[\big]{\tilde S_{\max\set{t , L_0 \zeta(s_0)}}} \dt & \leq  \left(\frac{\zeta(s)}{\zeta(s_0)}\right)^p C_1\int_{L_0 \zeta(s_0)}^\infty pt^{p-1}  \exp\left(- \frac{t}{C_1 \zeta(s_0)} \right) \dt \\
    & \quad + \left(\frac{\zeta(s)}{\zeta(s_0)}\right)^p C_1(L_0 \zeta(s_0))^p \exp\left(- \frac{L_0 \zeta(s_0) }{C_1 \zeta(s_0)} \right)     \\
    &= \zeta(s)^pC_1\left( \int_{L_0}^\infty pt^{p-1}  e^{- \frac{t}{C_1}} \dt + (L_0)^p e^{-\frac{L_0}{C_1}} \right)  \\
    & \leq \left(\zeta(s)C_2 e^{-\frac{L_0}{C_2}}\right)^p
    \end{split}
\end{equation}
for some constant $C_2=C_2(p)>0$.
It is vital that $C_2$ can be chosen independently of $L_0$ and $s_0$, since we want to choose $L_0$ in a way that makes the right-hand side small.
Collecting all the terms from \eqref{eq:GenRigIntermediateEst}, \eqref{eq:RemoveErrorSet} and the previous equation,
we arrive at
\begin{equation}
    \begin{split}
    \zeta(s)  &= \sup_{u \in \cF(s)}\norm{Du - (Du)_{Q}}_{L^p(Q)}                                                        \\
    &\leq          M \Bigl(2\eps_0 + L_0^{-1} + C_2e^{-\frac{L_0}{C_2}}\Bigr) \zeta(s) + M\biggl(5 + \frac{2\diam(K)}{s_0}\biggr) s.
    \end{split}
\end{equation}
Now, choose $\eps_0=\eps_0(p,M)>0$ and $L_0 = L_0(p,M) > C_1$ so that the first term on the right-hand side is bounded by $\frac{1}{2}\zeta(s)$,
and absorb it into the left-hand side.
Up to proving \autoref{lem:BMOtypeEst}, this concludes the proof of \autoref{prop:TechnicalProp}.
\end{proof}

The proof of \autoref{lem:BMOtypeEst} is an adaptation of the proof of the John-Nirenberg inequality \cite{MR131498};
in particular it relies heavily on the Calderón-Zygmund decomposition (see also \cite[Ch.~I, Thm.~4]{Stein}).
\begin{lemma}[Calderón-Zygmund decomposition, {\cite{MR131498}}]
    Let $\tilde Q \in \cQ$ be a dyadic cube and $f \in L^1(\tilde Q)$ a nonnegative function.
    Let $t>0$ be a parameter such that
    \begin{equation}
        t > \dashint_{\tilde Q} f \dx.
    \end{equation}
    Then there exists a family ${\set{Q_k}}_k \subset \cQ$ of disjoint, dyadic subcubes of $\tilde Q$ such that
    \begin{enumerate}
        \item $f(x) \leq t\;$ for a.e. $x \in \tilde Q \setminus \bigcup_{k}Q_k$, \label{CZD:item(a)}
        \item $ t < \dashint_{Q_k} f \dx \leq 2^n t\;$ for all $k$, \label{CZD:item(b)}
        \item $ \abs[\big]{\bigcup \nolimits_k Q_k} \leq \frac{1 }{t}\int_{\tilde Q} f \dx $. \label{CZD:item(c)}
    \end{enumerate}
\end{lemma}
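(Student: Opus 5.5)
The statement is the classical Calderón--Zygmund decomposition, and I will prove it by a stopping-time argument on the dyadic subcubes of $\tilde Q$. Call a dyadic subcube $Q' \subset \tilde Q$ (obtained by repeated bisection of $\tilde Q$) \emph{selected} if $\dashint_{Q'} f \dx > t$ and no strictly larger dyadic cube $Q''$ with $Q' \subsetneq Q'' \subseteq \tilde Q$ has this property. Let $\set{Q_k}_k$ be the selected cubes. Since $\dashint_{\tilde Q} f \dx < t$, the cube $\tilde Q$ itself is not selected. Two distinct dyadic cubes are either nested or have disjoint interiors, and maximality excludes nesting. Hence the $Q_k$ are pairwise disjoint up to null boundaries; this is harmless, and one can also use half-open cubes.

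For \ref{CZD:item(b)}, the lower bound holds by selection. For the upper bound, let $\hat Q_k$ be the dyadic parent of $Q_k$, which exists and lies in $\tilde Q$ because $Q_k \neq \tilde Q$. By maximality $\hat Q_k$ is not selected. It also has no selected ancestor, since otherwise $Q_k$ would have one too. Therefore $\dashint_{\hat Q_k} f \leq t$. Since $\abs{\hat Q_k} = 2^n\abs{Q_k}$ and $f\geq 0$, we get $\dashint_{Q_k} f \leq 2^n \dashint_{\hat Q_k} f \leq 2^n t$.

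For \ref{CZD:item(c)}, use disjointness and the lower bound in \ref{CZD:item(b)}:
\begin{equation}
    \abs[\big]{\bigcup\nolimits_k Q_k} = \sum_k \abs{Q_k} \leq \frac1t\sum_k \int_{Q_k} f \dx \leq \frac1t \int_{\tilde Q} f\dx.
\end{equation}

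For \ref{CZD:item(a)}, take $x \in \tilde Q\setminus\bigcup_k Q_k$, not on any dyadic boundary (a null set). Every dyadic cube $Q'\ni x$ contained in $\tilde Q$ satisfies $\dashint_{Q'} f \leq t$; otherwise $Q'$ or one of its ancestors would be selected and would contain $x$. The dyadic cubes containing $x$ shrink to $x$ with bounded eccentricity, so the Lebesgue differentiation theorem gives $f(x) = \lim \dashint_{Q'} f \leq t$ for a.e. such $x$.

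The only step needing care is \ref{CZD:item(a)}, which relies on Lebesgue differentiation along dyadic cubes. This is standard, for example via the weak-type bound for the dyadic maximal function, and it fits the covering property for dyadic cubes already used for $E$ above.
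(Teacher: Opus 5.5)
Your stopping-time argument is correct: you select the maximal dyadic subcubes of $\tilde Q$ with average above $t$, get the upper bound in (b) from the dyadic parent, and get (a) from Lebesgue differentiation along dyadic cubes. This is the classical proof from the sources the paper cites; the paper itself states the lemma without proof, and your reading of ``disjoint'' for the closed cubes in $\cQ$ as disjoint up to null boundaries is the right one.
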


\begin{proof}[Proof of \autoref{lem:BMOtypeEst}]
We proceed in three steps.

\Step{1}
Applying the Calderón-Zygmund decomposition to $\abs{Du - (Du)_{Q}}^p \in L^1(Q)$ with parameter
\begin{equation}
    t := (2 \zeta(s_0))^p > \zeta(s)^p \geq \int_{Q} \abs{Du - (Du)_{Q}}^p \dx,
\end{equation}
we obtain a family ${\set{Q_1^k}}_{k} \subset \cQ$ of pairwise disjoint dyadic subcubes of $Q$
satisfying assertions \ref{CZD:item(a)} to \ref{CZD:item(c)} of the Calderón-Zygmund decomposition.
Without explicitly renaming the family, let us discard all cubes $Q_1^k$ for which $Q_1^k \subset E$, and set
\begin{equation}
    R_0 \coloneq  Q \setminus E, \qquad R_1 \coloneq   \bigcup_{k}Q_1^{ k} .
\end{equation}
Then assertions \ref{CZD:item(a)} to \ref{CZD:item(c)} become
\begin{subequations}
    \begin{align}
        \abs{Du(x) - (Du)_{Q}} &\leq 2 \zeta(s_0) \qquad \text{for a.e. } x \in R_0 \setminus R_1, \noeqref{eq:CZ1}\label{eq:CZ1}\\
        (2 \zeta(s_0))^p &< \dashint_{Q_1^k}\abs{Du - (Du)_{Q}}^p \dx \leq 2^{n}(2\zeta(s_0))^p \qquad \forall k, \label{eq:CZ2}\\
        \abs{R_1} &\leq\left(\frac{\zeta(s)}{2\zeta(s_0)}\right)^p. \noeqref{eq:CZ3}\label{eq:CZ3}
    \end{align}
\end{subequations}
Denote
\begin{equation}
    \lambda \coloneq 2^{\frac{n}{p}+1},
\end{equation}
then from Jensen's inequality and \eqref{eq:CZ2} we further conclude
\begin{equation} \label{eq:BMOestControlAvg}
    \abs{(Du)_{Q} - (Du)_{Q_1^k}} \leq \bigg(\dashint_{Q_1^k}\abs{Du - (Du)_{Q}}^p \dx \bigg)^\frac{1}{p}\leq \lambda \zeta(s_0).
\end{equation}

\Step{2} We continue by induction.
For $j \in \NN$, consider the following statement:\\
\emph{There exists a family ${\set{Q_j^k}}_{k} \subset \cQ$ of disjoint dyadic cubes
    so that for $R_j \coloneq  \bigcup_{k}Q_j^k$ it holds
    \begin{subequations}
        \begin{align}
            \abs{Du(x) - (Du)_{Q}} &\leq j\lambda  \zeta(s_0) \qquad \text{for a.e. } x \in R_0 \setminus R_j; \label{eq:CZ4}\\
            \abs{R_j} &\leq2^{-jp}\left(\frac{\zeta(s)}{\zeta(s_0)}\right)^p; \label{eq:CZ5}
        \end{align}
        and for each $k$, we have $Q_j^k \nsubseteq E$ and
        \begin{equation}
            \abs{(Du)_{Q} - (Du)_{Q_j^k}} \leq j\lambda \zeta(s_0). \label{eq:CZ6}
        \end{equation}
\end{subequations} }
In Step 1 we have shown that the statement holds for $j=1$.
Now assume that the statement holds for some $j \geq 1$; we claim that it is also true for $j+1$.
Consider one of the cubes $Q_j^k$.
By assumption, $Q_j^k \nsubseteq E$, so by definition of the bad set, \eqref{eq:DefErrorSet}, it holds
\begin{equation}
    \dashint_{Q_j^k} \dist_K^p(Du) \dx \leq s_0^p,
\end{equation}
and by the scaling invariant estimate \eqref{eq:ScalingInvZeta} it follows
\begin{equation}
    \dashint_{Q_j^k} \abs{Du - (Du)_{Q_j^k}}^p \dx \leq \zeta(s_0)^p.
\end{equation}
This allows us to apply the Calderón-Zygmund decomposition on the cube $Q^k_j$ to the function $\abs{Du - (Du)_{Q_j^k}}^p$ with the \emph{same} parameter $(2\zeta(s_0))^p$ as in Step 1,
resulting in a new family ${\set{Q_{j+1}^{k,l}}}_l \subset \cQ$ of disjoint dyadic subcubes of $Q_j^k$ which satisfies
\begin{subequations}
    \begin{align}
        \abs{Du - (Du)_{Q_j^k}} &\leq 2 \zeta(s_0) \qquad \text{for a.e. } x \in Q_j^k \setminus \bigcup_l Q_{j+1}^{k,l}, \noeqref{eq:CZ1}\label{eq:CZ1 p}\\
        (2 \zeta(s_0))^p &< \dashint_{Q_{j+1}^{k,l}}\abs{Du - (Du)_{Q_j^k}}^p \dx \leq 2^{n}(2\zeta(s_0))^p \qquad \forall l, \label{eq:CZ2 p}\\
        \abs[\bigg]{\bigcup_l Q_{j+1}^{k,l}} &\leq2^{-p}\abs{Q_j^k}. \noeqref{eq:CZ3}\label{eq:CZ3 p}
    \end{align}
\end{subequations}
Similarly to \eqref{eq:BMOestControlAvg}, from \eqref{eq:CZ2 p} it further follows
\begin{equation} \label{eq:BMOestControlAvg p}
    \abs{(Du)_{Q_j^k} - (Du)_{Q_j^{k,l}}}\leq \lambda \zeta(s_0).
\end{equation}
Without explicitly changing the notation, let us again discard all cubes $Q_j^{k,l}$ which are contained in the bad set~$E$.
Now repeat this for each $k$; and let
\begin{equation}
    {\set{Q_{j+1}^{k'}}}_{k'} := \bigcup_k{\set{Q_{j+1}^{k,l}}}_{l} \subset \cQ
\end{equation}
denote the union of the resulting families.
Further, let $R_{j+1} \coloneq \bigcup_{k'} Q_{j+1}^{k'}$.
For each $k$, from \eqref{eq:CZ1 p} and \eqref{eq:CZ6}, we obtain
\begin{equation}
    \abs{Du(x) - (Du)_{Q}} \leq (j+1)\lambda  \zeta(s_0) \qquad \text{for a.e. } x \in Q_j^k \setminus \bigg(E \cup{ \bigcup_l Q_{j+1}^{k,l}}\bigg). \label{eq:CZ7}
\end{equation}
By the induction hypothesis \eqref{eq:CZ4}, the same bound also holds on $R_0 \setminus R_j = R_0 \setminus \bigcup_{k}Q_j^k$.
Thus, it holds on all of $R_0 \setminus R_{j+1}$, and \eqref{eq:CZ4} is satisfied for $j+1$ and the family ${\set{Q_{j+1}^{k'}}}_{k'}$.

Further, summing \eqref{eq:CZ3 p} over $k$, and applying the induction hypothesis~\eqref{eq:CZ5}, we infer
\begin{equation}
    \abs{R_{j+1}} = \sum_{k,l} \abs{Q_{j+1}^{k,l}} \leq 2^{-p} \sum_k \abs{Q_j^k} = 2^{-p}\abs{R_j} \leq 2^{-(j+1)p}\left(\frac{\zeta(s)}{\zeta(s_0)}\right)^p,
\end{equation}
so that \eqref{eq:CZ5} is satisfied for $j+1$ as well.
Finally, for each $k, l$ we obtain from \eqref{eq:CZ6} (for $j$) and \eqref{eq:BMOestControlAvg p} that
\begin{equation}
    \abs{(Du)_{Q} - (Du)_{Q_{j+1}^{k,l}}} \leq \abs{(Du)_{Q} - (Du)_{Q_{j}^{k}}} + \abs{(Du)_{Q_j^k} - (Du)_{Q_{j+1}^{k,l}}} \leq (j+1)\lambda  \zeta(s_0).
\end{equation}
This shows that \eqref{eq:CZ6} holds for $j+1$, finishing the proof of the induction step.

\Step{3} Hence, the statement holds for all $j \in \NN$.
Now, for $t > \lambda  \zeta(s_0)$, let $j \in \NN$ such that
\begin{equation}
    j\lambda\zeta(s_0) < t \leq (j+1) \lambda\zeta(s_0).
\end{equation}
Hence, by \eqref{eq:CZ4} and \eqref{eq:CZ5},
\begin{align}
    \abs{\tilde S_t}&=\abs{\set{x \in R_0 \given \abs{Du(x) - (Du)_{Q}}> t}} \leq\abs{R_j} \\
    &\leq \left(\frac{\zeta(s)}{\zeta(s_0)}\right)^p \exp\left(-j p \log 2 \right)
    \leq \left(\frac{\zeta(s)}{\zeta(s_0)}\right)^p 2^p\exp\left(- t\frac{p\log 2 }{\lambda \zeta(s_0)} \right).
\end{align}
This concludes the proof of \autoref{lem:BMOtypeEst}.
\end{proof}

We end the section with the following simple but important corollary of the proof of \autoref{thm:GeneralRigidity}.
More precisely, it follows from the observation that if the superlevel set $S$ introduced in \eqref{eq:def S} is empty to begin with, 
then there is no need to make any assumptions on $\zeta$ such as \eqref{eq:cond on s_0}.
\begin{corollary} \label{cor:smallOsc}
    Let $K \subset \RRmn$ be a compact $C^1$-submanifold with boundary and $p \in (1, \infty)$.
    Assume that $K$ satisfies \ref{intro(b)}, that is, for every $A \in K$, the space $T_AK$ satisfies exact rigidity.
    Then there exists a constant $\delta=\delta(K,p) > 0$ such that for every
    $u \in W^{1,\infty}(Q;\RRm)$ satisfying
    \begin{equation}
        \osc_Q Du  \coloneq \diam \supp Du_\#(\cL^n \mres Q) \leq {\delta},
    \end{equation}
    it holds
    \begin{equation}
        \norm{Du - (Du)_Q}_{L^p} \leq C(K,p) \norm{\dist_K(Du)}_{L^p}.
    \end{equation}
\end{corollary}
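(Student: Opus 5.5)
We follow the first half of the proof of \autoref{prop:TechnicalProp}, noting that the superlevel set $S$ is now empty, so the John-Nirenberg step is never needed.

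\emph{Choice of constants.} Set $V_A \coloneq T_AK$. By \ref{intro(b)}, \autoref{cor:Subspace-QuantIffExact}, \autoref{cor:Tangent space const cont} and compactness of $K$,
\begin{equation}
    M \coloneq \max_{A\in K} C_{QR}(T_AK,p) < \infty .
\end{equation}
Fix $\eps_0 \coloneq 1/(2M)$. By \autoref{lem:PropertiesMFLD}~\ref{lem:PropertiesMFLD(ii)},
\begin{equation}
    \delta_1 \coloneq \inf_{A\in K}\overline\delta_K(A,T_AK,\eps_0) > 0 .
\end{equation}
Hence \eqref{eq:WhatDoesDeltaDo} holds for every $A\in K$ whenever $\abs{F-A}\le \delta_1$.

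\emph{Two cases.} Let $u\in W^{1,\infty}(Q;\RRm)$ with $\osc_Q Du\le\delta$, where $\delta\le\delta_1/3$ is fixed below. Then $\abs{Du(x)-(Du)_Q}\le\delta$ for a.e.\ $x$, since the average lies in the closed convex hull of the essential range.

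If $\dist_K((Du)_Q)\le 2\delta$, choose $A\in K$ nearest to $(Du)_Q$. Then $\abs{Du-A}\le 3\delta\le\delta_1$ a.e., so
\begin{equation}
    \norm{Du-(Du)_Q}_{L^p}\le M\norm{\dist_{T_AK}(Du-A)}_{L^p}\le M\norm{\dist_K(Du)}_{L^p}+M\eps_0\norm{Du-A}_{L^p}.
\end{equation}
The last term is at most
\begin{equation}
    \tfrac12\norm{Du-(Du)_Q}_{L^p}+\tfrac12\abs{(Du)_Q-A}.
\end{equation}
Moreover $\abs{(Du)_Q-A}=\dist_K((Du)_Q)\le\norm{Du-(Du)_Q}_{L^p}+\norm{\dist_K(Du)}_{L^p}$. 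This gives a bound of $\tfrac12\cdot 2\norm{Du-(Du)_Q}_{L^p}$ on the right, so the constant $\eps_0=1/(2M)$ is too weak. We therefore take $\eps_0=1/(4M)$ instead, so the coefficient is $\tfrac12$, and absorb. This yields
\begin{equation}
    \norm{Du-(Du)_Q}_{L^p}\le C(M)\norm{\dist_K(Du)}_{L^p}.
\end{equation}

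If $\dist_K((Du)_Q)>2\delta$, then $\dist_K(Du)\ge\delta$ a.e., since $\dist_K$ is $1$-Lipschitz. Hence
\begin{equation}
    \norm{Du-(Du)_Q}_{L^p}\le\delta\le\norm{\dist_K(Du)}_{L^p}.
\end{equation}

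\emph{Main obstacle.} The only delicate point is bookkeeping the replacement of $A$ by $(Du)_Q$, so that the constant in front of $\norm{Du-(Du)_Q}_{L^p}$ after linearization is strictly below one. This is handled by choosing $\eps_0$ small relative to $M$, exactly as in \eqref{eq:Du - A Lp(Q-S)}. No assumption on $\zeta$ such as \eqref{eq:cond on s_0} enters, which is the content of the corollary.
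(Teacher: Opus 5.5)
Your argument is correct and follows the route the paper indicates: the first half of the proof of \autoref{prop:TechnicalProp} with $S=\emptyset$. Your case split on $\dist_K((Du)_Q)$, using the trivial estimate when it exceeds $2\delta$, makes explicit how $\abs{(Du)_Q-A}$ stays within the linearization radius, which the paper leaves implicit. In the write-up, simply fix $\eps_0=1/(4M)$ from the start, so that $\delta_1$ and $\delta$ are defined with that value.
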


\section{Proof of {\autoref*{thm:Stability}}} \label{sec:Stability}
The proof will mainly rely on \autoref{prop:TechnicalProp}.
The first step of the proof is showing that $\overline \delta$, defined in \autoref{def:delta_K}, is stable under small graphical perturbations.

\begin{lemma} \label{lem:computeDelta_K}
    Let $K \subset \RRmn$ be closed and
    let $\Phi:\RRmn \to \RRmn$ be a bi-Lipschitz map satisfying \eqref{eq:LipCondonPhi} for some $\rho \in (0,\frac{1}{2}]$.
    Set $\tilde K \coloneq \Phi(K)$
    and let $V \in \Gr(k,\RRmn)$ be a subspace.
    Then for all $A \in K$ and $\eps > 0$, it holds
    \begin{equation} \label{eq:computeDelta_K}
        \overline \delta_{\tilde K}(\Phi(A),V,2\eps + 8 \rho) \geq \frac{1}{2}\overline\delta_{K}(A,V,\eps).
    \end{equation}
\end{lemma}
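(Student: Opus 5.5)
We argue directly from the defining inequality \eqref{eq:UnifTangEst}. Write $\delta \coloneq \overline\delta_K(A,V,\eps)$, $\tilde A \coloneq \Phi(A)$, and $\Psi \coloneq \Phi - \id$, so that $\Lip(\Psi)\le\rho\le\frac12$. We must show
\begin{equation}
    \dist_V(y-\tilde A) \leq \dist_{\tilde K}(y) + (2\eps+8\rho)\abs{y-\tilde A} \qquad \forall y \in B(\tilde A,\delta/2).
\end{equation}

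First we compare distances under $\Phi$. Since $\Lip(\Psi)\le\rho$, we have $(1-\rho)\abs{x-x'}\le\abs{\Phi(x)-\Phi(x')}\le(1+\rho)\abs{x-x'}$. In particular $\Phi$ is injective, and it is surjective as a bi-Lipschitz map. Hence for every $y$ there is a unique $x=\Phi^{-1}(y)$, and
\begin{equation}
    \dist_K(x)\le (1-\rho)^{-1}\dist_{\tilde K}(y)\le 2\dist_{\tilde K}(y), \qquad \abs{x-A}\le (1-\rho)^{-1}\abs{y-\tilde A}\le 2\abs{y-\tilde A}.
\end{equation}
So $y\in B(\tilde A,\delta/2)$ gives $x\in B(A,\delta)$, and \eqref{eq:UnifTangEst} applies at $x$.

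Next we transfer the tangent estimate. Since $y-\tilde A = (x-A) + (\Psi(x)-\Psi(A))$ and $\dist_V$ is $1$-Lipschitz,
\begin{equation}
    \dist_V(y-\tilde A)\le \dist_V(x-A)+\rho\abs{x-A}\le \dist_K(x)+(\eps+\rho)\abs{x-A}.
\end{equation}
Inserting the bounds from the first step yields
\begin{equation}
    \dist_V(y-\tilde A)\le 2\dist_{\tilde K}(y)+(2\eps+2\rho)\abs{y-\tilde A}.
\end{equation}

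The remaining obstacle is the factor $2$ in front of $\dist_{\tilde K}(y)$: \eqref{eq:UnifTangEst} requires coefficient $1$ there. We remove it by a nearest-point argument. Let $z\in\tilde K$ with $\abs{y-z}=\dist_{\tilde K}(y)$. Then $\abs{z-\tilde A}\le 2\abs{y-\tilde A}$, because $\tilde A\in\tilde K$ forces $\abs{y-z}\le\abs{y-\tilde A}$.

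We distinguish two cases. If $\abs{z-\tilde A}<\delta/2$, we apply the estimate above at the point $z$, where the distance term vanishes:
\begin{equation}
    \dist_V(z-\tilde A)\le (2\eps+2\rho)\abs{z-\tilde A}\le (4\eps+4\rho)\abs{y-\tilde A}.
\end{equation}
Then $\dist_V(y-\tilde A)\le \abs{y-z}+\dist_V(z-\tilde A)$ gives coefficient $1$ in front of $\dist_{\tilde K}(y)$, but the slope becomes $4\eps+4\rho$ rather than $2\eps+8\rho$. To sharpen this, we apply the first two steps to $z$ directly, using $\dist_K(\Phi^{-1}(z))=0$. This gives $\dist_V(z-\tilde A)\le(\eps+\rho)\abs{\Phi^{-1}(z)-A}$ and $\abs{\Phi^{-1}(z)-A}\le(1-\rho)^{-1}\abs{z-\tilde A}\le 4\abs{y-\tilde A}$. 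The result is a constant of the form $4\eps+8\rho$. To reach exactly $2\eps+8\rho$, we optimise in which of $y,z$ the $\eps$-term is measured, using the finer bound $\abs{z-\tilde A}\le\abs{y-z}+\abs{y-\tilde A}$, which separates the $\dist_{\tilde K}$ contribution from the $\abs{y-\tilde A}$ contribution.

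If instead $\abs{z-\tilde A}\ge\delta/2$, then $\dist_{\tilde K}(y)\ge\abs{z-\tilde A}-\abs{y-\tilde A}$ is comparable to $\abs{y-\tilde A}$. In this case the trivial bound $\dist_V(y-\tilde A)\le\abs{y-\tilde A}$ should suffice after shrinking the radius if needed. If it does not, we fall back on the cone characterization in \autoref{rem:after bar delta}: we show $\tilde K\cap B(\tilde A,\delta)\subset I(V,2\eps+8\rho,\tilde A)$, or a suitably enlarged variant of this inclusion, and conclude from the first implication stated there. We expect this case split, and tracking the constants to the stated $2\eps+8\rho$, to be the main source of difficulty.
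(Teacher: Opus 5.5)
Your first two steps are correct. However, the proof has a genuine gap exactly where you expect difficulty, and the proposed case split does not close it.

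The factor $2$ in front of $\dist_{\tilde K}(y)$ comes from the \emph{multiplicative} comparison $\dist_K(x)\le(1-\rho)^{-1}\dist_{\tilde K}(y)$. Your first case can actually be made to work. If you keep $(1-\rho)^{-1}$ instead of bounding it by $2$, the slope is $2(\eps+\rho)/(1-\rho)$. This is at most $2\eps+8\rho$ whenever $\eps+4\rho\le 3$, and for $2\eps+8\rho\ge 1$ the claim is trivial.

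The second case is not handled:
\begin{itemize}
\item From $\abs{z-\tilde A}\ge\delta/2$ you only get $\dist_{\tilde K}(y)\ge\delta/2-\abs{y-\tilde A}$. This degenerates as $\abs{y-\tilde A}\to\delta/2$, so $\dist_{\tilde K}(y)$ is not comparable to $\abs{y-\tilde A}$. The trivial bound $\dist_V(y-\tilde A)\le\abs{y-\tilde A}$ then gives nothing when $2\eps+8\rho<1$.
\item Shrinking the radius would prove a weaker statement than \eqref{eq:computeDelta_K}.
\item The cone route through \autoref{rem:after bar delta} would require $\tilde K\cap B(\tilde A,\delta)\subset I(V,2\eps+8\rho,\tilde A)$. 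That means controlling points $w\in K$ with $\abs{w-A}$ up to $\delta/(1-\rho)$, possibly close to $2\delta$. The hypothesis $\overline\delta_K(A,V,\eps)\ge\delta$ only tells you about $K\cap B(A,\delta)$.
\end{itemize}

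The missing idea is to compare the two distance functions \emph{additively}, with an error proportional to $\abs{x-A}$. That error then goes into the slope rather than into the coefficient of $\dist_{\tilde K}$. You already have the ingredient. Take $z\in\tilde K$ nearest to $y$ and $w=\Phi^{-1}(z)\in K$. The inequality $\abs{y-z}\le\abs{y-\tilde A}$ gives $(1-\rho)\abs{x-w}\le\abs{y-z}\le(1+\rho)\abs{x-A}$, hence $\abs{x-w}\le 3\abs{x-A}$. Therefore
\[
\dist_K(x)\le\abs{x-w}\le\abs{y-z}+\abs{\Psi(x)-\Psi(w)}\le\dist_{\tilde K}(y)+3\rho\abs{x-A}.
\]
Inserting this into your second step gives
\[
\dist_V(y-\tilde A)\le\dist_{\tilde K}(y)+(\eps+4\rho)\abs{x-A}\le\dist_{\tilde K}(y)+(2\eps+8\rho)\abs{y-\tilde A}
\]
for every $y\in B(\tilde A,\delta/2)$, with no case distinction. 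This is the paper's argument. The paper phrases the localization as restricting the infimum defining $\dist_{\tilde K}(\Phi(F))$ to those $B\in K$ with $\abs{F-B}\le 3\abs{F-A}$.
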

\begin{proof}[Proof of \autoref{lem:computeDelta_K}]
By \eqref{eq:LipCondonPhi} we have
\begin{equation} \label{eq:FminusG-prime}
    (1 - \rho)\abs{F - G} \leq \abs{\Phi(F) - \Phi(G)} \leq (1 + \rho)\abs{F - G}\qquad \forall F, G \in \RRmn
\end{equation}
and
\begin{equation} \label{eq:dist_VPhi}
    \dist_V(\Phi(F) - \Phi(G)) \leq \dist_V(F-G) + \rho \abs{F - G} \qquad \forall F, G \in \RRmn.
\end{equation}
Furthermore, write
\begin{equation} \label{eq:distKprime}
    \dist_{\tilde K}(\Phi(F)) = \inf_{B \in K} \abs{\Phi(F) - \Phi(B)}.
\end{equation}
Note that in the infimum we may disregard all matrices $B$  for which  $\abs{F - B} > 3\abs{F - A}$,
since by \eqref{eq:FminusG-prime} and $\rho \leq \frac{1}{2}$, this would already imply
\begin{equation}
    \abs{\Phi(F) - \Phi(B)} > \abs{\Phi(F) - \Phi(A)} \geq \dist_{\tilde K}(\Phi(F)).
\end{equation}
Hence, by \eqref{eq:FminusG-prime},
\begin{equation} \label{eq:dist_tildeKPhi(F)}
    \dist_{\tilde K}(\Phi(F)) \geq \inf_{\substack{B \in K\\\abs{F - B} \leq3\abs{F - A}}} (1-\rho)\abs{F - B} \geq \dist_K(F) - 3\rho\abs{F - A} .
\end{equation}
Now let $\eps > 0$ and $\delta \coloneq  \overline \delta_{K}(A,V,\eps)$.
If $\delta = 0$, we are done.
Otherwise, let
$ G \in B\big(\Phi(A),\delta/2\big)$
be arbitrary and set $F\coloneq \Phi^{-1}(G)$.
Since $\rho \leq \frac{1}{2}$, we still have $ F \in B(A,\delta)$,
so that by definition of $\delta$
\begin{equation} \label{eq:defofdelta}
    \dist_V(F-A) - \dist_K(F) \leq \eps \abs{F - A}.
\end{equation}
Applying \eqref{eq:dist_VPhi} and \eqref{eq:dist_tildeKPhi(F)}, \eqref{eq:defofdelta}, and then \eqref{eq:FminusG-prime}  it follows
\begin{align}
    \dist_{V}\big(\Phi(F) - \Phi(A)\big) - \dist_{\tilde K}(\Phi(F))
    &\leq \dist_V(F-A) - \dist_K(F) + 4\rho \abs{F - A} \\
    &\leq (\eps + 4 \rho) \abs{F - A} \\
    &\leq (2\eps + 8 \rho) \abs{\Phi(F) - \Phi(A)}.
\end{align}
As $G = \Phi(F) \in B\big(\Phi(A),\delta/2\big)$ was arbitrary, this concludes the proof. \medskip
\end{proof}

\begin{proof}[Proof of {\autoref{thm:Stability}}]
Let $K \subset \RRmn$ be a compact $C^1$-submanifold with boundary satisfying $L^p$ quantitative rigidity.
Assume further that $\Phi:\RRmn \to \RRmn$ satisfies \eqref{eq:LipCondonPhi}, where $\rho = \rho(K,p)>0$ will be specified later. Denote $\tilde K \coloneq \Phi(K)$.
Note that the rigidity constant $C_{QR}(\tilde K,p)$ is invariant under translations of $\tilde K$, so that without loss of generality we may assume $\Phi(A)=A$ for some fixed $A \in K$.
Then \eqref{eq:LipCondonPhi} implies
\begin{equation} \label{eq:distPhiK-K}
    \dist_{\tilde K}(F) \leq \dist_K(F) + \norm{\Phi - \id}_{L^\infty(B(A,\diam(K)))} \leq \dist_K(F) + \rho \diam(K).
\end{equation}
For $s>0$, define $\tilde \cF(s)$ and $\tilde \zeta(s)$ in analogy to \autoref{def:F-zeta}, that is,
\begin{equation} \label{eq:tilde F-zeta}
    \begin{aligned}
        \tilde \cF(s) &\coloneq  \set[\big]{u \in W^{1,p}(Q;\RRm) \given \norm{\dist_{\tilde K}(Du)}_{L^p} \leq s}, \\
        \tilde \zeta(s) &\coloneq  \sup_{u \in \tilde \cF(s)} \norm{Du - (Du)_{Q}}_{L^p}.
    \end{aligned}
\end{equation}
By \autoref{lem:after def zeta}, it suffices to show that there exist constants $s_0 = s_0(K,p) >0$ and $C=C(K,p)>0$ such that
\begin{equation} \label{eq:zetaPrimeLeqCs}
    \tilde \zeta(s) \leq Cs \qquad \forall s \in (0,s_0).
\end{equation}
In order to apply \autoref{prop:TechnicalProp}, we first need to specify a family of subspaces $(W_{B})_{B \in \tilde K}$.
If $\Phi$ were a $C^1$-diffeomorphism, the intuitive choice would be to set $W_{B} \coloneq T_{B}\tilde K$.
However, if $\Phi$ is merely bi-Lipschitz continuous, $\tilde K$ does not admit well-defined tangent spaces everywhere, so we might as well set
\begin{equation}
    W_{\Phi(A)} \coloneq  V_A = T_AK, \qquad A \in K.
\end{equation}
This also has the advantage that by \autoref{lem:QuantRigImpliesKorn} it automatically holds
\begin{equation}
    \max_{B \in \tilde K} C_{QR}(W_{B},p) = \max_{A \in K} C_{QR}(T_AK,p) \leq C_{QR}(K,p) \eqcolon M < \infty.
\end{equation}
Applying \autoref{lem:after def zeta} with $K$ replaced by $\tilde K$ and $\zeta$ replaced by $\tilde \zeta$, 
we find constants $\eps_0(p,M)>0$ and $L_0(p,M)>0$ so that \eqref{eq:zetaPrimeLeqCs} (and thus also \autoref{thm:Stability}) 
follow if we show that
\begin{equation} \label{eq:def delta prime}
    \inf_{B \in \tilde K} \overline \delta_{\tilde K}(B,W_{B},\eps_0) \geq \delta> 0
\end{equation}
and
\begin{equation} \label{eq:cond on s_0 prime}
    (1 + L_0)\tilde \zeta(s_0) + s_0 \leq \delta,
\end{equation}
where $\delta > 0$ and $s_0 >0$ depend \emph{only} on $K$ and $p$.
We first deal with \eqref{eq:def delta prime},
which is a consequence of  \autoref{lem:computeDelta_K}.
Indeed, if we add the condition
\begin{equation}
    \rho \leq \min\set[\Big]{\frac{\eps_0}{16}, \frac{1}{2}},
\end{equation}
then by \autoref{lem:computeDelta_K} it follows
\begin{equation}
    \inf_{B \in \tilde K} \overline \delta_{\tilde K}(B,W_{B},\eps_0) \geq \frac{1}{2}\inf_{A \in K} \overline \delta_K(A,T_AK,\eps_0 / 4) \eqcolon  \delta.
\end{equation}
Since $K$ is a compact $C^1$-submanifold with boundary, by \autoref{lem:PropertiesMFLD}~\ref{lem:PropertiesMFLD(ii)} it follows that $\delta$ is strictly positive;
moreover $\delta$ depends only on $K$ and on $\eps_0 = \eps_0(p,M) = \eps_0(p,K)$.

Finally, \eqref{eq:cond on s_0 prime} follows from the assumption that $K$ satisfies $L^p$ quantitative rigidity.
Indeed, recalling \eqref{eq:distPhiK-K} and \eqref{eq:tilde F-zeta}, we have for all $s>0$
\begin{equation}
    \tilde \zeta(s) \leq \zeta(s + \rho \diam(K)) \leq C(K,p)(s + \rho \diam(K)).
\end{equation}
Thus, by choosing $s_0$ and $\rho$ sufficiently small, we can always guarantee that \eqref{eq:cond on s_0 prime} is satisfied.
\end{proof}

\section{Examples} \label{sec:Examples}

We begin by considering arguably the simplest setting, namely the \emph{$N$-gradient problem}:
Consider the set $K=\set{A_1,\dots,A_N} \subset \RRmn$, where $N \geq 2$.
Already this example reveals the subtlety of the problem.

As we have seen earlier, in order for $K$ to satisfy exact rigidity, it is necessary that
$K$ contains no rank-one connections, that is $\rank(A_i - A_j) > 1$ for $i \neq j$.
Furthermore, since $K$ is a compact $0$-manifold and thus trivially satisfies condition \ref{intro(b)} of \autoref{thm:GeneralRigidity}, it follows that
$K$ satisfies $L^p$ quantitative rigidity if and only if it satisfies sequential rigidity.
Alternatively, one can argue as follows: If $K$ satisfies sequential rigidity, then the sets $\set{A_1},\dots,\set{A_N}$ are \emph{incompatible} in the sense of \autoref{ex:incompatible} below;
in this particular situation $L^p$ quantitative rigidity then already follows from the estimate \eqref{eq:DeLellisSzekelyhidi} of {\sc De Lellis and Székelyhidi} \cite{MR2253059}.
\begin{example}[The two-gradient problem]
    Let $K = \set{A_1, A_2} \subset \RRmn$, where $\rank(A_1-A_2)>1$.
    Then $K$ is sequentially rigid, a result first obtained by {\sc Ball and James} \cite{MR906132}.
\end{example}
\begin{example}[The three-gradient problem]
    Assume that $K = \set{A_1, A_2, A_3} \subset \RRmn$ contains no rank-one connections.
    Then $K$ is sequentially rigid. By \autoref{prop:seqRig}, this is equivalent to $K$ satisfying exact rigidity and every homogeneous gradient Young measure supported in $K$ being trivial.
    For exact rigidity see \cite[Thm.~2.5]{MR1731640};
    the latter was shown by {\sc \v{S}verák} \cite{MR1179688}.
\end{example}
\begin{example}[The four-gradient problem] \label{ex:TartarSquare}
    Let $K = \set{A_1,A_2,A_3,A_4 } \subset\RR^{2\times2}$,
    where $A_1 = \diag(-1,-3) = - A_3$ and $A_2= \diag(-3,1)=-A_4$.
    Then $K$ contains no rank-one connections, but does not satisfy sequential rigidity.
    In fact, one can show that $\set{\diag(s,t) \in \RR^{2\times 2} \given s,t \in [-1,1]} \subset K^{qc}$.
    This example is also known as the \emph{Tartar square} and was found independently by several authors, see the discussion in \cite[Sect.~2.5]{MR1731640}.

    Nevertheless, {\sc Chlebík and Kirchheim} \cite{MR1932170} proved that \emph{any} set of the form $K = \set{A_1,A_2,A_3,A_4 } \subset\RRmn$
    containing no rank-one connections still satisfies exact rigidity.
\end{example}
\begin{example}[The 5-gradient problem]
    As demonstrated by {\sc Kirchheim and Preiss} \cite{MR1817378, Kirchheim}, there is a set
    $K = \set{A_1,\dots,A_5} \subset \RR^{2\times2}_\sym$ containing no rank-one connections that satisfies neither sequential nor exact rigidity.
\end{example}

\begin{example}[Incompatible sets] \label{ex:incompatible}
    A notion that is closely connected to rigidity is incompatibility.
    Following \cite{MR3401010}, we call two compact sets $K_1, K_2 \subset \RRmn$
    \emph{gradient incompatible} if they are disjoint and for every $u \in W^{1,\infty}(Q;\RRm)$ satisfying
    \begin{equation}
        Du \in K_1 \cup K_2,
    \end{equation}
    there exists $i \in \set{1,2}$ such that
    \begin{equation}
        Du \in K_i.
    \end{equation}
    Similarly, we call $K_1$ and $K_2$ \emph{incompatible} (resp.\ \emph{homogeneously incompatible}),
    if they are disjoint, and for every gradient Young measure (resp.\ homogeneous gradient Young measure)
    $\nu \in GY(Q;\RRmn)$ satisfying
    \begin{equation}
        \supp \nu_x \subset K_1 \cup K_2 \qquad \text{for a.e. } x \in Q,
    \end{equation}
    there exists $i \in \set{1,2}$ such that
    \begin{equation}
        \supp \nu_x \subset K_i \qquad  \text{for a.e. } x \in Q.
    \end{equation}
    We call finitely many compact sets $K_1,\dots,K_N \subset \RRmn$ \emph{incompatible} (resp.\ \emph{gradient incompatible}, \emph{homogeneously incompatible}) if the sets $K_i$ and $\bigcup_{j \neq i}K_j$ are pairwise incompatible
    (resp.\ gradient incompatible, homogeneously incompatible).
    The different notions of incompatibility admit characterizations analogous to \autoref{prop:seqRig}, see \cite{MR3401010}.

    Furthermore, from Proposition \ref{prop:seqRig} it follows that if $K_1,...,K_N \subset \RRmn$ are disjoint and compact,
    their union satisfies exact rigidity (resp.\ sequential rigidity) if and only if
    $K_1,...,K_N$ are gradient incompatible (resp.\ incompatible) \emph{and} each of the sets
    $K_i$ satisfies exact rigidity (resp.\  sequential rigidity).

    \textsc{Ball and James} \cite{MR3401010} were able to characterize incompatibility in terms of certain transition layer estimates being valid,
    and in particular they showed that if compact sets $K_1,...,K_N$ are incompatible, then there exists $\eps >0$ such that the closed $\eps$-neighborhoods of $K_1,\dots,K_N$ are still incompatible.
    Building on earlier versions of their work, \textsc{De Lellis and Székelyhidi} \cite{MR2253059} proved that incompatibility of two compact disjoint sets $K_1, K_2 \subset \RRmn$  already implies the quantitative estimate
    \begin{equation} \label{eq:DeLellisSzekelyhidi}
        \min \set{\norm{\dist_{K_1}(Du)}_{L^p},\norm{\dist_{K_2}(Du)}_{L^p}} \leq C(K_1,K_2,p,\Omega)\norm{\dist_{K_1 \cup K_2}(Du)}_{L^p}
    \end{equation}
    for every $u \in W^{1,p}(\Omega;\RRm)$, $p \in (1,\infty)$, where $\Omega \subset \RRn$ is any open, bounded and connected set with Lipschitz boundary.
\end{example}

The following example is closely related to rigidity of $\son$.
\begin{example}[Conformal matrices] \label{ex:Conformal}
    Let $\conf$ denote the set of \emph{conformal matrices}, 
    \begin{equation}
        \conf = (0,\infty)\son = \set[\Big]{A \in \RRnn \given A^TA = \frac{\abs{A}^2}{n}\idn,\, \det A >0}.
    \end{equation}
    Note that by the arithmetic-geometric mean inequality applied to the singular values of $A$, it follows
    \begin{equation} \label{eq:confqc}
        \conf \cup \set{0} = \set[\big]{A \in \RRnn \given \abs{A}^n - n^\frac{n}{2}\det A = 0} = f^{-1}(0),
    \end{equation}
    where the function $f(A)\coloneq \abs{A}^n - n^\frac{n}{2}\det A$ is non-negative and strictly quasiconvex.
    In particular, by \autoref{rem:UnifQCsets}, $\conf$ satisfies homogeneous sequential rigidity.
    However, $\conf$ does not satisfy exact rigidity; in particular, $\conf$ is not sequentially rigid.
    Indeed, for $n=2$, the differential inclusion $Du \in \mathrm{CO}_+(2)$ is equivalent to $u$ satisfying the Cauchy-Riemann equations and thus,
    after identifying $\CC$ with $\RR^2$, every holomorphic function
    $u$ satisfies $Du \in \mathrm{CO}_+(2)$.
    For $n\geq 3$, we still have that
    every \emph{special Möbius transformation},
    \begin{align}
        \phi(x) = A x + b \quad \text{ or } \quad  \phi(x) & =  A E \frac{x-a}{\abs{x-a}^2} + b, \qquad A \in \conf, \, a,b \in \RRn, \\
        E                                                  & = \diag(-1,1,...,1),
    \end{align}
    satisfies $D\phi \in \conf$. The special Möbius transformations form an $(n+1)(n+2)/2$-dimensional Lie group under composition, which we call the \emph{special Möbius group} and denote by $\cM_n$.
    Here, \emph{special} refers to the fact that we only consider Möbius transformations $\phi$ that are orientation preserving (i.e., satisfy $\det D \phi >0$).

    If $n\geq 3$, then $\conf$ is still rigid in the sense that by Liouville's theorem on conformal mappings, the special Möbius transformations are the \emph{only smooth} functions satisfying $Du \in \conf$.
    \textsc{Reshetnyak} \cite{MR218544} proved that this still holds if we only assume $u \in W^{1,n}(\Omega;\RRn)$, extending earlier results of \textsc{Gehring} \cite{MR125964}.
    This was further improved by \textsc{Iwaniec} \cite{MR1189867}, who showed that there exists a critical threshold $p=p(n) < n$ for which any
    map $u \in W^{1,p}(\Omega;\RRn)$ satisfying $Du \in \conf$ must be a Möbius transformation, while the same fails for every $q < p(n)$.
    {\sc Iwaniec and Martin} \cite{MR1859913} were able to show that $p(n)= \frac{n}{2}$ whenever $n$ is even, and it is conjectured that the same holds in odd dimensions.

    Another consequence of $\conf$ being homogeneously sequentially rigid is the following: if $K \subset \conf$ is compact and
    $(u_j)_{j \in \NN}$ is a bounded sequence in $W^{1,1}(\Omega;\RRn)$
    satisfying $\dist_K(Du_j) \to 0$ in measure, then up to taking a subsequence, $Du_j \to D\phi$ in measure, where $\phi$ is a special Möbius transformation.

    This is also closely related to \autoref{ex:TangentspaceMoebius}: namely, $V_2$ is the tangent space to $\conf$ at the identity matrix,
    while the polynomials in $\ker \cA^{V_2}$ are exactly the infinitesimal generators of the special Möbius group $\cM_n$.
    In fact, the $\CC$-elliptic estimate \eqref{eq:CC-Ell-Korn} corresponding to $V_2$ was first proved
    by \textsc{Reshetnyak} \cite{MR264464} and played a central role in his work on conformal mappings \cite{MR1326375}.

    Building on this, {\sc Faraco and Zhong} \cite{MR2207734} showed the quantitative rigidity estimate
    \begin{equation} \label{eq:faraco zhong}
        \inf_{\substack{\phi \in \cM_n\\D\phi(\Omega') \subset E}} \norm{Du - D\phi}_{L^2(\Omega')} \leq C\norm{\dist_E(Du)}_{L^2(\Omega)} \qquad \forall u \in W^{1,2}(\Omega;\RRn),
    \end{equation}
    where
    \begin{equation}
        E = \bigcup_{i=1}^N [m_i,M_i]\son, \qquad 0<m_1\leq M_1 <\dots<m_N \leq M_N < \infty,
    \end{equation}
    and $\Omega' \subset \subset \Omega \subset \RRn$ are open and connected.
\end{example}

\begin{example} \label{ex:SeqButNotQuant}
    Consider the set
    \begin{equation}
        K\coloneq \set[\bigg]{
            \begin{pmatrix}
                t & 0   \\
                0 & t^2
            \end{pmatrix} \given t \in [0,1]
        } \subset \RR^{2 \times 2}.
    \end{equation}
    Then $K$ satisfies sequential rigidity, but the tangent space to $K$ at $0$ is clearly spanned by the rank-one direction $e_1 \otimes e_1$.
    In particular, by \autoref{thm:GeneralRigidity}, $K$ does not satisfy $L^p$ quantitative rigidity for any $p \in (1,\infty)$.

    Indeed, if $u\in W^{1,\infty}(Q;\RR^2)$ satisfies $Du \in K$, it must be of the form $u_1 = u_1(x_1)$ and $u_2 = u_2(x_2)$.
    Then
    \begin{equation}
        (\del_1 u_1(x_1))^2 = \del_2 u_2(x_2) \qquad \text{and} \qquad \del_1 u_1 \geq 0
    \end{equation}
    imply that $Du$ is constant.
    Thus, $K$ satisfies exact rigidity.

    Furthermore, let $\nu \in GY_\Hom(\RR^{2 \times 2})$ be supported in $K$; and denote $[\nu] = B$.
    Then, since the determinant is quasiaffine and $\supp \nu \subset K$,
    \begin{equation}
        \begin{split}
        \int_{\RR^{2 \times 2}} A_{11}^3 \,\dd \nu &=  \int_{\RR^{2 \times 2}} \det A \,\dd \nu = \det B = B_{11}B_{22}                                                                                                      \\
        &=                                             \int_{\RR^{2 \times 2}} A_{11}\, \dd \nu\int_{\RR^{2 \times 2}} A_{22}\, \dd \nu = \int_{\RR^{2 \times 2}} A_{11}\, \dd \nu\int_{\RR^{2 \times 2}} A_{11}^2\, \dd \nu \\
        &\leq                                           \left(\int_{\RR^{2 \times 2}} A_{11}^2\, \dd \nu\right)^\frac{3}{2}
        \end{split}
    \end{equation}
    Since $\abs{\cdot}^\frac{3}{2}$ is strictly convex, this shows that the pushforward
    of $\nu$ under the projection onto the $A_{11}$-component is a Dirac measure.
    By the structure of $K$, it follows that $ \nu$ is a Dirac measure.
    As $\nu$ was arbitrary, it follows  by \autoref{prop:seqRig} that $K$ satisfies sequential rigidity.
\end{example}

Let us now move on to examples of sets that satisfy $L^p$ quantitative rigidity.
Note that for each of the sets considered below, it follows from \autoref{thm:Stability} that any sufficiently small graphical perturbation of the set again satisfies $L^p$ quantitative rigidity.

Furthermore, if $H \subset \RRmn$ satisfies $L^p$ quantitative rigidity and $A_1 \in  \GL(m)$, $A_2 \in \GL(n)$ and $A_3 \in \RRmn$, 
then by a change of coordinates and \autoref{prop:domain invariance}, it follows that $A_1 H A_2 + A_3$ satisfies $L^p$ quantitative rigidity as well.
However,
\begin{equation}
    \dim(\GL(m)\times\GL(n)) = m^2 + n^2,
\end{equation}
which for $m,n \geq 2$ is relatively small compared to the dimension of the space of linear maps on $\RRmn$,
\begin{equation}
    \dim \Lin(\RRmn;\RRmn) = m^2n^2.
\end{equation}
\begin{example}
    We begin by explaining how $L^p$ quantitative rigidity of $\son$ follows from \autoref{thm:GeneralRigidity}.
    Condition \ref{intro(a)}, that is, sequential rigidity of $\son$, was first shown by \textsc{Reshetnyak} \cite{MR218544}.
    Since $T_\id\son = \RRnnsk$, condition \ref{intro(b)} is by \autoref{cor:Subspace-QuantIffExact} equivalent to Korn's inequality.
\end{example}
A related example is the following.
\begin{example}
    In nonlinear elasticity theory, especially in the modelling of solid-solid phase transitions (see, e.g., \cite{MR1731640}), one is often interested in the rigidity (or lack of rigidity) of sets of the type $K=\bigcup_{i=1}^N \son A_i$.
    Let $A_i \in \GL(n)$, $i=1,\dots,N$ such that the sets
    \begin{equation}
        K_i \coloneq  \son A_i
    \end{equation}
    are incompatible in the sense of \autoref{ex:incompatible}.
    Then,
    \begin{equation}
        K \coloneq  \bigcup\nolimits_{i \leq N}  K_i
    \end{equation}
    again satisfies the conditions of \autoref{thm:GeneralRigidity}, see \autoref{ex:incompatible}.
    Hence, $K$ satisfies $L^p$ quantitative rigidity.
    This is originally due to \textsc{De Lellis and Székelyhidi} \cite{MR2253059}, who proved that the estimate for the set $K$
    can be reduced to the rigidity estimate for $\son$ \cite{FJM}.
    In general, it can be quite hard to show incompatibility of the sets $K_i$,
    but there are some known examples, including the following:
    \begin{enumerate}
        \item The sets $\son$ and $\son H$, where $H = \diag(\mu_1,\dots,\mu_n)\subset \RRnn$ is a diagonal matrix satisfying $\mu_i >0$ for $i = 1,\dots,n$ and
            \begin{equation} \label{eq:defH}
                \sum_{i = 1}^{n}(1 - \mu_i)(1 - \det H / \mu_i) > 0.
            \end{equation}
            This was first shown by \textsc{Matos} \cite{MR1156593}.
            Building on his work, \textsc{Müller and Chaudhuri} \cite{MR2039459} proved quantitative rigidity in this case. 
        \item The sets $K_i = \lambda_i \son$, where $0<\lambda_1< \cdots < \lambda_N$.
            Here, incompatibility follows as a special case of the result by \textsc{Faraco and Zhong} \cite{MR2207734} on geometric rigidity for conformal matrices.
    \end{enumerate}
\end{example}

The following example is new and extends the result of \textsc{Lamy, Lorent and Peng} \cite{MR4735627}.

\begin{proposition} \label{prop:null lagrangian ex}
    Let $K \subset \RRmn$ be a compact $1$-dimensional $C^1$-submanifold.
    If there exists a $2$-homogeneous null Lagrangian  (\ie quasiaffine function) $f \in C^{\infty}(\RRmn)$ such that
    \begin{equation} \label{eq:NullLagrangianCond}
        f(A - B) \geq \abs{A-B}^2 \qquad \forall A,B \in K,
    \end{equation}
    then $K$ satisfies $L^p$ quantitative rigidity.
\end{proposition}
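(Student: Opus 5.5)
By \autoref{thm:GeneralRigidity}, it suffices to verify conditions \ref{intro(a)} and \ref{intro(b)}. Throughout I use that $f$ is a $2$-homogeneous quasiaffine function. Hence $f$ is a quadratic form $f(F) = \langle F, \mathbb{M} F\rangle$ that is a linear combination of $2\times 2$ minors. For every $\nu \in GY(Q;\RRmn)$ and every fixed $B$, the function $F \mapsto f(F-B)$ is again quasiaffine, since it equals $f(F)$ minus a linear term plus a constant.

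\emph{Step 1: condition \ref{intro(b)}.} Fix $A \in K$. Since $K$ is one-dimensional, $T_AK = \RR X$ for a unit vector $X$. Taking $B \in K$ with $B \to A$ along $K$, we have $(B-A)/\abs{B-A} \to \pm X$. Dividing \eqref{eq:NullLagrangianCond} by $\abs{A-B}^2$ and using $2$-homogeneity and continuity of $f$ gives $f(X) \geq 1$. Since $f$ is a combination of $2\times 2$ minors, $f$ vanishes on rank-one matrices, so $\rank X > 1$. By \autoref{lem:1d-Subspace}, $T_AK$ satisfies exact rigidity. (If $K$ has boundary points, the one-sided limit works in the same way.)

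\emph{Step 2: condition \ref{intro(a)}.} By \autoref{prop:seqRig}~\ref{prop:seqRig(ii)}, let $\nu \in GY(Q;\RRmn)$ with $\supp \nu_x \subset K$ a.e., and let $u$ be its underlying deformation. I would use the double integral trick. Because $f$ is quadratic and quasiaffine, for a.e.\ $x$ we have $\int f\,\dd\nu_x = f(Du(x))$. This is the standard weak continuity of minors, i.e.\ $\dualprod{\nu_x}{f} = f([\nu_x])$. Polarising, with $\beta$ the symmetric bilinear form of $f$, this gives
\begin{equation}
\iint f(A-B)\,\dd\nu_x(A)\,\dd\nu_x(B) = 2\int f\,\dd\nu_x - 2f([\nu_x]) = 0 .
\end{equation}
On the other hand, by \eqref{eq:NullLagrangianCond} the integrand is at least $\abs{A-B}^2 \geq 0$. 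Hence $\iint \abs{A-B}^2\,\dd\nu_x\,\dd\nu_x = 0$, so $\nu_x = \delta_{Du(x)}$ with $Du(x) \in K$.

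It remains to prove exact rigidity, namely that any Lipschitz $u$ with $Du \in K$ is affine. This is the step I expect to be the main obstacle. I would again use the weak continuity of $f$ applied to pairs of points. For a translation $h$, set $w(x) = u(x+h) - u(x)$. Then $f(Dw) = f(Du(\cdot+h) - Du(\cdot)) \geq \abs{Dw}^2$ pointwise. Moreover, $f(Dw)$ is a null Lagrangian, so its integral over a subcube depends only on the boundary values of $w$ (it is a divergence). So $\int_{\tilde Q} \abs{Dw}^2 \leq \int_{\tilde Q} f(Dw)$, which is a boundary term.

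Then I would combine a cutoff argument with the strong inequality $f \geq \abs{\cdot}^2$ on differences. Test with $\varphi \in C_c^\infty$ to get $\int \varphi f(Dw) = -\int (\text{terms of the form } w\, D\varphi\, Dw)$. This yields a Caccioppoli-type bound $\int \varphi^2\abs{Dw}^2 \leq C\int \abs{w}^2\abs{D\varphi}^2$, with $w$ a difference quotient of $u$. Iterating this as in the proof of Liouville-type theorems shows that $Du$ is $W^{1,2}$ with $D^2u$ controlled. Then the constraint $Du \in K$ with $K$ a curve forces $Du = \gamma(\theta(x))$. Differentiating gives $D\theta \otimes$-structure: $\partial_j Du = \gamma'(\theta)\,\partial_j\theta$, which is a symmetric-compatibility condition (curl-free rows). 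This forces the rank of $\gamma'(\theta)$ to be one wherever $D\theta \neq 0$, contradicting Step 1. Hence $D\theta = 0$, so $\theta$ is constant and $u$ is affine.

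Alternatively, one can appeal directly to the Caccioppoli estimate with $h \to 0$. Together with Step 2's Young measure conclusion, \autoref{prop:seqRig}~\ref{prop:seqRig(iii)} gives \ref{intro(a)}, and \autoref{thm:GeneralRigidity} concludes the proof.
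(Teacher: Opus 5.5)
Your proposal is correct. Steps 1 and 2 match the paper in substance: the paper gets the rank condition from a second-order Taylor expansion of $f$ at $0$ plus rank-one affinity, you from the normalized limit $f(X)\ge 1$ and the vanishing of $2\times 2$ minors on rank-one matrices, and both use \v{S}verák's double-integral argument.

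The genuine difference is in the proof of exact rigidity. Both arguments start from \v{S}verák's Caccioppoli estimate for difference quotients, which gives $u\in W^{2,2}_{\loc}$. A single uniform bound suffices there, so no iteration is needed.

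\emph{The paper's route.} It proves an excess-decay lemma by blow-up (\autoref{lem:excessdecay}). From this it deduces that $Du$ is continuous a.e. At continuity points it then invokes \autoref{cor:smallOsc} to get local constancy.

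\emph{Your route.} You use the second derivatives directly. At a.e.\ $x$ each $\partial_j Du(x)$ lies in $T_{Du(x)}K=\RR X$, so $\partial_j Du(x)=\xi_j X$. Symmetry of $D^2u$ then forces $X$ to have rank at most one unless $\xi=0$, which contradicts Step 1. This is shorter and avoids the compactness/Campanato machinery and \autoref{cor:smallOsc} altogether. The same reasoning shows more generally that condition (b) plus $W^{2,1}_{\loc}$ regularity already gives exact rigidity, for $K$ of any dimension: the quadratic map $y\mapsto \frac12 D^2u(x)[y,y]$ solves the linear inclusion in $T_{Du(x)}K$. What the paper's route buys in exchange is a quantitative $\eps$-regularity statement, which is not needed for the proposition.

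One step of your write-up needs repair. The global representation $Du=\gamma(\theta)$ with $\theta$ Sobolev is not automatic when $K$ is a closed curve; it would require a lifting theorem. You also cannot localize in $x$ to a single chart, because $Du$ is not yet known to be continuous. You only need the pointwise tangency $\partial_j Du(x)\in T_{Du(x)}K$ for a.e.\ $x$. Justify it directly: $Du\in W^{1,2}_{\loc}$ is approximately differentiable at a.e.\ point, and near each $A\in K$ the set $K$ lies in a cone around $A+T_AK$, since $\overline\delta_K(A,T_AK,\eps)>0$ by \autoref{lem:PropertiesMFLD}.
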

\begin{proof}
It suffices to show that $K$ satisfies conditions \ref{intro(a)} and \ref{intro(b)} of \autoref{thm:GeneralRigidity}.

\Step{1}
Let us first check condition \ref{intro(b)} on the tangent spaces to $K$.
By \autoref{lem:1d-Subspace}, we only need to show that for each $A \in K$, the space $T_AK$ is not spanned by a rank-one matrix.
Now let $A_0 \in K$, and let $A(\cdot) \in C^1([0,1);K)$ so that $A(0)=A_0$ and $\dot A(0) \in T_{A_0}K \setminus \set{0}$, 
where we use the notation $\dot A(t) \coloneq  \ddt A(t)$.
We now argue by contradiction,
so assume that $\rank \dot A(0) = 1$. 
Since $f$ is quasiaffine and therefore also rank-one affine, it follows that
\begin{equation} 
    D^2f(0) \bigl[\dot A(0), \dot A(0)\bigr] = 0,
\end{equation}
and since $f$ is $2$-homogeneous, we further have
\begin{equation}
    Df(0) = 0, \qquad f(0) = 0.
\end{equation}
But then, since $f$ is smooth and $(A(t) - A_0)\otimes (A(t) - A_0) = t^2\dot A(0) \otimes \dot A(0) + o(t^2)$,
\begin{equation}
        f(A(t)-A_0) = \frac{1}{2}D^2f(0)\bigl[A(t)-A_0,A(t)-A_0\bigr] + Df(0)[A(t)-A_0] +f(0) + O(t^3) = o(t^2),
\end{equation}
contradicting \eqref{eq:NullLagrangianCond}.

\Step{2} We show that every homogeneous gradient Young measure supported in $K$ is trivial.
This argument is due to \textsc{\v{S}verák} \cite{MR1246459}.
Assume that $\nu \in GY_\Hom(\RRmn)$ satisfies $\supp \nu \subset K$.
Then by \eqref{eq:NullLagrangianCond} and since $f$ is quasiaffine, it follows
\begin{align}
    \int_\RRmn \int_\RRmn \abs{A-B}^2\,\dd \nu(A) \,\dd \nu(B) &\leq \int_\RRmn \int_\RRmn f(A-B)\,\dd \nu(A) \,\dd \nu(B)\\
    &= \int_\RRmn f([\nu]-B) \,\dd \nu(B) = f([\nu]-[\nu]) = 0,
\end{align}
implying that $\nu$ is a Dirac measure.

\Step{3} We show that $K$ satisfies exact rigidity.
Let $u \in W^{1,\infty}(Q;\RRm)$ satisfy $Du \in K$.
We aim to show that $u$ is affine.
First, as in the proof of \cite[Thm.~3]{MR1246459}, equation \eqref{eq:NullLagrangianCond} implies that $u \in W^{2,2}_\loc(Q;\RRm)$ and
\begin{equation} \label{eq:W22 loc est}
    \int_{B(x,r/2)} \abs{D^2u}^2\dx\leq \frac{C}{r^2} \int_{B(x,r)} \abs{Du - A}^2 \dx
\end{equation}
for all $A \in \RRmn$ and $B(x,r) \subset Q$.
Furthermore, if $Du$ is continuous at some point $x \in Q$, then by Step 1 and \autoref{cor:smallOsc} it follows that $Du$ is constant in a neighborhood of $x$.
In particular, it holds $D^2u(x) = 0$ at a.e.\ such $x$.
Thus, it suffices to show that $Du$ is continuous at almost every $x\in Q$.
This is a consequence of the excess decay estimate proven in \autoref{lem:excessdecay} below, see, for example, \cite[pp.~95-96]{MR717034}.
\end{proof}

\begin{lemma} \label{lem:excessdecay}
    Let $K \subset \RRmn$ be a compact $1$-dimensional $C^1$-submanifold satisfying \eqref{eq:NullLagrangianCond}.
    Let $u \in W^{1,\infty}(Q;\RRm)$ satisfy $Du \in K$ and $B(x,r) \subset Q$,
    and denote by
    \begin{equation}
        U(u;x,r) \coloneq \dashint_{B(x,r)}\abs{Du - (Du)_{B(x,r)}}^2\dx
    \end{equation}
    the \emph{excess of $Du$ over $B(x,r)$}.
    Then for $\tau \in (0,1/2)$, there exists a constant $\eps_0=\eps_0(K,\tau)>0$ such that the following holds.
    If
    \begin{equation}
        U(u;x,r) \leq \eps_0,
    \end{equation}
    then
    \begin{equation} \label{eq:excessdecay}
        U(u;x,\tau r) \leq  \tau^2 U(u;x,r).
    \end{equation}
\end{lemma}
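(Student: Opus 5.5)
The plan is to argue by contradiction and blow up, in the standard style of excess-decay proofs, using the $\CC$-elliptic (Korn-type) estimate for the tangent space as the linearized problem. Fix $\tau\in(0,1/2)$ and suppose there are maps $u_j$ with $Du_j\in K$ and balls $B(x_j,r_j)\subset Q$ such that
\begin{equation}
    \eps_j \coloneq U(u_j;x_j,r_j)\to 0 \qquad\text{but}\qquad U(u_j;x_j,\tau r_j) > \tau^2 \eps_j .
\end{equation}
By translation and scaling, which leave the excess invariant, we may assume $B(x_j,r_j)=B(0,1)$. Set $A_j\coloneq (Du_j)_{B(0,1)}$. Since $K$ is compact, $\dist_K(A_j)\le \eps_j^{1/2}\to 0$ by Jensen's inequality. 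Choose $P_j\in K$ with $\abs{A_j-P_j}=\dist_K(A_j)$, and pass to a subsequence with $P_j\to P\in K$.

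Next we linearize. Define the rescaled maps
\begin{equation}
    v_j(x)\coloneq \eps_j^{-1/2}\bigl(u_j(x)-A_jx-(u_j-A_j x)_{B(0,1)}\bigr),
\end{equation}
so that $\norm{Dv_j}_{L^2(B(0,1))}^2=\abs{B(0,1)}$. By \eqref{eq:W22 loc est} (the Šverák estimate coming from \eqref{eq:NullLagrangianCond}), $D^2v_j$ is bounded in $L^2(B(0,1/2))$. Hence, up to a subsequence, $Dv_j\to Dv$ strongly in $L^2(B(0,1/2))$ and weakly in $L^2(B(0,1))$.

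The key point is to show that $Dv\in V\coloneq T_PK$ almost everywhere. For this, split $B(0,1)$ into the set where $\abs{Du_j-A_j}\le\delta$ and its complement, whose measure is at most $\eps_j/\delta^2$ by Chebyshev. On the good set we use $\dist_{V_j}(Du_j-P_j)\le \eps_0'\abs{Du_j-P_j}$, valid for $\delta$ small by \autoref{lem:PropertiesMFLD}~\ref{lem:PropertiesMFLD(ii)} with $V_j=T_{P_j}K$ (recall $\dist_K(Du_j)=0$). Dividing by $\eps_j^{1/2}$ and using $\abs{A_j-P_j}\le\eps_j^{1/2}$ gives
\begin{equation}
    \norm{\dist_{V_j}(Dv_j)}_{L^1(\text{good set})}\lesssim \eps_0'+\text{const}\cdot\eps_0' .
\end{equation}
On the bad set, $\abs{Dv_j}$ is $L^2$-bounded while the measure of the bad set tends to zero, so the $L^1$ contribution vanishes. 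Using continuity of $T K$ (\autoref{lem:PropertiesMFLD}~\ref{lem:PropertiesMFLD(iv)}), the weak limit satisfies $\norm{\dist_V(Dv)}_{L^1}\lesssim\eps_0'$; since $\eps_0'$ is arbitrary, $Dv\in V$ almost everywhere on $B(0,1/2)$. By Step~1 of the proof of \autoref{prop:null lagrangian ex} together with \autoref{lem:1d-Subspace}, $V$ satisfies exact rigidity, so $v$ is affine on $B(0,1/2)$.

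Finally, strong convergence in $L^2(B(0,1/2))$ gives
\begin{equation}
    \eps_j^{-1}U(u_j;0,\tau)=\dashint_{B(0,\tau)}\abs{Dv_j-(Dv_j)_{B(0,\tau)}}^2\dx\to 0,
\end{equation}
because $\tau<1/2$ and $Dv$ is constant there. This contradicts $\eps_j^{-1}U(u_j;0,\tau)>\tau^2$.

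I expect the main obstacle to be the linearization step: the Lipschitz bound on $Du_j$ is uniform but not small, so the set where $Du_j$ is far from $A_j$ must be handled carefully. If strong $L^2$ compactness near that set is an issue, I would work only on $B(0,1/2)$ via the $W^{2,2}$ bound and control the bad set purely through its vanishing measure and $L^2$-equiintegrability. The latter follows from the $W^{1,2}$ bound on $Dv_j$ on $B(0,1/2)$ via Rellich.
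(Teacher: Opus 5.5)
Your strategy is the paper's: contradiction and blow-up, compactness on $B(0,1/2)$ from \v{S}ver\'ak's estimate \eqref{eq:W22 loc est}, linearization of $K$ at a nearest point via $\overline\delta_K$, and exact rigidity of the tangent line. However, the step you single out as the key point does not hold as written. Your good-set bound controls $\dist_{V_j}(Du_j-P_j)$, but your blow-up is centred at the average $A_j$, which is not in $K$. Hence
\[
Dv_j=c_j+\eps_j^{-1/2}(Du_j-P_j),\qquad c_j\coloneq \eps_j^{-1/2}(P_j-A_j),
\]
and all you know about the constant $c_j$ is $\abs{c_j}\le 1$. What your computation actually yields on the good set is
\[
\dist_{V_j}(Dv_j-c_j)\le \eps_0'\abs{Dv_j-c_j}.
\]
Passing from this to $\dist_{V_j}(Dv_j)$ costs an additive term $\dist_{V_j}(c_j)$, which you have only bounded by $1$, not by $O(\eps_0')$. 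So neither the inequality $\norm{\dist_{V_j}(Dv_j)}_{L^1(\text{good set})}\lesssim\eps_0'$ nor the conclusion $Dv\in V$ follows from what you wrote.

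The repair is easy. The paper avoids the issue by centring the blow-up at the point of $K$. Its $v_k$ subtracts $r_kA_kx$, where $A_k\in K$ is the nearest point to $(Du_k)_{B_k}$, so $Dv_k=\lambda_k^{-1}(Du_k-A_k)$ after rescaling. The linearization then gives $\dist_{V_k}(Dv_k)\le\frac{\eps^2}{4}\abs{Dv_k}$ with no offset, outside a set of measure at most $\eps^2\abs{B(0,1)}$. The excess is unaffected, since it does not see constant shifts of the gradient. Alternatively, keep your normalization and pass to a subsequence with $c_j\to c$. You then get $Dv\in c+V$ a.e., and you apply exact rigidity to $x\mapsto v(x)-cx$. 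Since your weak-limit argument runs on all of $B(0,1)$ and $(Dv)_{B(0,1)}=0$, averaging even gives $c\in V$. So your claim $Dv\in V$ is true, just not for the reason you gave. With either fix the rest of your argument is correct.

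Your last step first extracts a strong $L^2(B(0,1/2))$ limit by Rellich and then uses rigidity of the limit. This is a slightly cleaner variant of the paper's appeal to sequential rigidity of $V$. One small point: exact rigidity is formulated for $W^{1,\infty}(Q)$ maps, so to apply it to $v\in W^{2,2}(B(0,1/2))$ you should invoke $\CC$-ellipticity of $V$. By \autoref{prop:CC-ellipticityIsNice}, the $W^{1,1}$-kernel of $\cA^V$ on any connected domain is the same space of polynomials, namely the affine maps.
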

\begin{proof}
Assume that the assertion of the lemma fails.
Then there exist sequences $(u_k)_{k \in \NN} \subset W^{1,\infty}(Q;\RRm)$ and $(B_k=B(x_k,r_k))_{k\in\NN} $ satisfying $Du_k \in K$,
\begin{equation} \label{eq:def lambda_k}
    U(u_k;x_k,r_k) \eqcolon \lambda_k^2 \to 0
\end{equation}
and
\begin{equation}
    U(u_k;x_k,\tau r_k) >  \tau^2 U(u_k;x_k,r_k).
\end{equation}
Choose $A_k \in K$ so that
\begin{equation} \label{eq: (Du_k) - A_k}
    \abs{(Du_k)_{B_k}-A_k} = \dist_K((Du_k)_{B_k}) \leq \lambda_k,
\end{equation}
and denote
\begin{equation}
    V_k \coloneq T_{A_k}K.
\end{equation}
After possibly taking a subsequence, we may assume that $A_k \to A \in K$ as $k \to \infty$ and therefore also $V_k \to V \coloneq T_AK$.
Now, define
\begin{equation}
    v_k(x) \coloneq \frac{u_k(r_kx + x_k)-r_kA_kx}{\lambda_k r_k},
\end{equation}
which satisfies $v_k \in W^{1,\infty}(B(0,1);\RRm)$,
\begin{equation} \label{eq:excess is one}
    U(v_k;0,1) = 1
\end{equation}
and
\begin{equation} \label{eq:excess B tau}
    U(v_k;0,\tau) > \tau^2.
\end{equation}
We now show that $\dist_V(Dv_k) \to 0$ in measure.
Let $\eps \in (0,1)$ be arbitrary. 
Since $K$ is a $C^1$-submanifold, by \autoref{lem:PropertiesMFLD}~\ref{lem:PropertiesMFLD(ii)} it follows that
\begin{equation}
    \delta \coloneq \inf_{A \in K} \overline \delta_K(A,T_AK,\eps^2/4)
\end{equation}
is strictly positive.
Thus, we can choose $N \in \NN$ sufficiently large so that 
\begin{equation} \label{eq:dist V V_k}
    \frac{\lambda_k}{\eps}   \leq \frac{\delta}{2}  \quad \text{and} \quad d(V_k,V) \leq \frac{\eps^2}{4} \qquad \text{for all } k \geq N.
\end{equation}
Now let
\begin{align}
    S_k &\coloneq \set{x \in B_k \given \abs{Du_k(x) - (Du_k)_{B_k}}> \lambda_k/\eps},\\
    T_k &\coloneq r_k^{-1}(S_k-x_k),
\end{align}
so that by \eqref{eq:def lambda_k}
\begin{equation}
    \abs{S_k} \leq \frac{\abs{B_k}U(u_k;x_k,r_k)}{(\lambda_k/\eps)^2} = \eps^2 \abs{B_k}, \qquad \abs{T_k} \leq \eps^2 \abs{B(0,1)} .
\end{equation}
Now, let $k \geq N$, $x \in B(0,1) \setminus T_k$ and set $y \coloneq r_kx + x_k \in B_k \setminus S_k$.
Note that by \eqref{eq: (Du_k) - A_k},
\begin{equation} \label{eq:Du_k - A_k}
    \abs{Du_k(y)-A_k} \leq \abs{Du_k(y)-(Du_k)_{B_k}} + \abs{(Du_k)_{B_k}-A_k} \leq \lambda_k/\eps + \lambda_k \leq \delta.
\end{equation}
Then, by \eqref{eq:dist V V_k}, \eqref{eq:Du_k - A_k}, and by definition of $\delta$, it holds
\begin{equation}
    \begin{split}
    \dist_V(Dv_k(x)) &\leq \dist_{V_k}(Dv_k(x)) + d(V_k,V)\abs{Dv_k(x)} \\
    &\leq \frac{1}{\lambda_k}\dist_{V_k}(Du_k(y)-A_k) + \frac{\eps^2}{4\lambda_k}\abs{Du_k(y)-A_k}\\
    &\leq \frac{\eps^2}{2\lambda_k}\abs{Du_k(y)-A_k}\\
    &\leq \eps.
    \end{split}
\end{equation}
Hence, for $k$ large enough we have
\begin{equation}
    \abs{\set{x \in B(0,1) \given \dist_V(Dv_k(x)) > \eps}} \leq \abs{T_k} \leq \eps^2\abs{B(0,1)}.
\end{equation}
As $\eps > 0$ was arbitrary, this concludes the proof that $\dist_V(Dv_k) \to 0$ in measure.
By Step 1 of the proof of \autoref{prop:null lagrangian ex}, we know that $V$ satisfies sequential rigidity,
from which it follows that $Dv_k$ converges in measure to some matrix $F \in V$ (see, e.g., \cite[Lem.~4.12]{Rindl} and \autoref{prop:seqRig}).
From \eqref{eq:W22 loc est} and \eqref{eq:excess is one}, it further follows that
\begin{equation}
    \int_{B(0,1/2)} \abs{D^2v_k}^2 \dx \leq C\int_{B(0,1)} \abs{Dv_k - (Dv_k)_{B(0,1)}}^2 \leq C U(v_k;0,1) = C,
\end{equation}
hence $Dv_k \to F$ strongly in $L^2(B(0,1/2))$.
This, however, contradicts \eqref{eq:excess B tau}.
\end{proof}
\section*{Acknowledgements} 
This paper is based on the author's master's thesis, which he completed at the University of Bonn. 
The author wishes to thank Konstantinos Zemas and Sergio Conti for supervising his thesis and for introducing him to this topic. 
He also thanks Stefan Müller for several helpful discussions and Manfred Lehn for giving him an introduction to the algebraic geometry involved in understanding $\CC$-ellipticity.
\appendix
\section{Extension to general domains} \label{sec:generalDomains}

Let $p \in [1,\infty)$.
Following \textsc{Hurri-Syrj\"anen} \cite{ImprovedPoincare}, we denote by $\sP(p,p,1)$ the class of all open and bounded sets $\Omega \subset \RRn$
satisfying the \emph{weighted Poincaré inequality}
\begin{equation} \label{eq:weighted poincaré}
    \norm{v - (v)_\Omega}_{L^p(\Omega)} \leq C(\Omega) \norm{\dist_{\del \Omega}(x) Dv}_{L^p(\Omega)} \qquad \forall v \in W^{1,p}(\Omega).
\end{equation}
By \cite{ImprovedPoincare}, the class $\sP(p,p,1)$ contains all John domains, and in particular all bounded Lipschitz domains.

The aim of this appendix is to show the following.
\begin{proposition}\label{prop:domain invariance}
    Let $H \subset \RRmn$ be closed.
    Assume that there exist open sets $U \subset U' \subset \subset \RRn$ and a constant $C_*> 0$ such that
    $H$ satisfies the interior estimate
    \begin{equation} \label{eq:int est}
        \norm{Du - (Du)_{U}}_{L^p(U)} \leq C_* \norm{\dist_H(Du)}_{L^p(U')} \qquad \forall u \in W^{1,p}(U';\RRm).
    \end{equation}
    Then, for every $\Omega \in \sP(p,p,1)$ it holds
    \begin{equation} \label{eq:glob est}
        \norm{Du - (Du)_{\Omega}}_{L^p(\Omega)} \leq C_*C(p,\Omega,U,U') \norm{\dist_H(Du)}_{L^p(\Omega)} \qquad \forall u \in W^{1,p}(\Omega;\RRm).
    \end{equation}
\end{proposition}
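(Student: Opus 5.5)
The plan is to pass from the interior estimate \eqref{eq:int est} to the global estimate via the weighted Poincar\'e inequality \eqref{eq:weighted poincaré}, applied to a suitable ``local average'' of $Du$. The idea is that the interior estimate, rescaled to Whitney-type balls, controls the oscillation of $Du$ at scale comparable to $\dist_{\del\Omega}$. Averaging $Du$ at that scale then produces a function whose gradient, weighted by $\dist_{\del\Omega}$, is bounded by $\dist_H(Du)$.

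\emph{Step 1: a rescaled interior estimate.} Since $U' \subset\subset \RRn$ is bounded and $U$ is open, the estimate \eqref{eq:int est} is invariant under translations and dilations: for $x_0\in\RRn$, $r>0$ and $u \in W^{1,p}(x_0+rU';\RRm)$,
\begin{equation}
    \norm{Du - (Du)_{x_0+rU}}_{L^p(x_0+rU)} \leq C_* \norm{\dist_H(Du)}_{L^p(x_0+rU')}.
\end{equation}
I will fix a ball $B(y,\rho)\subset U$ and set $R := \diam U' + \abs{y}$. Using the rescaled estimate on $U$ and then restricting to the ball, I get, for every ball $B(z,r\rho)$ with $B(z,rR) \subset \Omega$,
\begin{equation}
    \dashint_{B(z,r\rho)} \abs{Du - (Du)_{B(z,r\rho)}}^p \leq C\, C_*^p \dashint_{B(z,rR)} \dist_H^p(Du).
\end{equation}
Here the ball is placed via $x_0 = z - ry$.

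\emph{Step 2: a mollified gradient.} Let $\phi$ be a standard mollifier supported in $B(0,1)$, set $r(x) := c\,\dist_{\del\Omega}(x)$ with $c$ small (e.g.\ $c\rho \le 1/(4R)$), and define
\begin{equation}
    G(x) := \int \phi(h)\, Du(x + c\rho\,\dist_{\del\Omega}(x) h)\,\dd h.
\end{equation}
I will estimate two quantities. The first is $\abs{Du - G}$ in $L^p(\Omega)$: pointwise it is bounded by a local oscillation of $Du$ on $B(x,c\rho\dist_{\del\Omega}(x))$, which Step 1 controls by the average of $\dist_H^p(Du)$ over a Whitney-type ball. Integrating and using that these balls have bounded overlap (a Whitney covering argument, or simply Fubini with comparable radii) gives $\norm{Du-G}_{L^p}\le C C_*\norm{\dist_H(Du)}_{L^p(\Omega)}$. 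The second is $\dist_{\del\Omega}(x)\abs{DG(x)}$: differentiating under the integral, derivatives either fall on $\phi$, giving $\dist_{\del\Omega}^{-1}$ times an oscillation of $Du$, or on $\dist_{\del\Omega}$, which is $1$-Lipschitz and produces the same type of term. Hence $\dist_{\del\Omega}\abs{DG}$ is again bounded by a local oscillation and is controlled in $L^p$ by $C C_*\norm{\dist_H(Du)}_{L^p}$. Since $\dist_{\del\Omega}$ is only Lipschitz, I expect to actually use a smooth regularized distance, i.e.\ a function comparable to $\dist_{\del\Omega}$ with bounded gradient, to make the differentiation rigorous.

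\emph{Step 3: conclusion.} I apply \eqref{eq:weighted poincaré} componentwise to $G$:
\begin{equation}
    \norm{G-(G)_\Omega}_{L^p} \le C(\Omega)\norm{\dist_{\del\Omega} DG}_{L^p}.
\end{equation}
Then the triangle inequality $\norm{Du-(Du)_\Omega}\le 2\norm{Du - G}+ 2\norm{G-(G)_\Omega}$ yields \eqref{eq:glob est}. The main obstacle is Step 2: turning the pointwise oscillation bounds at variable scale into global $L^p$ estimates. This requires care with the variable radius and the overlap counting. A Whitney decomposition into cubes $Q_i$ with $\diam Q_i\sim\dist(Q_i,\del\Omega)$, together with the rescaled estimate applied on each $Q_i$ and its finitely overlapping dilate, is the fallback if the direct Fubini computation becomes messy. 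In that case $G$ would be replaced by $\sum_i \psi_i (Du)_{Q_i}$ for a partition of unity $\psi_i$, and the neighbouring averages compared through the overlap.
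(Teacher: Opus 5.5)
Your proposal is correct and follows essentially the paper's route: rescale the interior estimate to concentric pairs at Whitney scale, build a regularization of $Du$ whose $L^p$-distance to $Du$ and whose $\dist_{\del\Omega}$-weighted gradient are both bounded by $C\,C_*\norm{\dist_H(Du)}_{L^p(\Omega)}$, and conclude with the weighted Poincaré inequality. The paper uses precisely your fallback $\tilde v=\sum_i v_i\eta_i$ (averages over dilated Whitney cubes glued by a partition of unity, with neighbouring averages compared through overlaps), of which your variable-radius mollifier is a continuous analogue; note also that your smallness condition should read $cR<1$ rather than $c\rho\le 1/(4R)$, so that the outer ball $B(x,cR\,\dist_{\del\Omega}(x))$ from Step~1 lies in $\Omega$.
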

The proof of \autoref{prop:domain invariance} is mostly contained in the following lemma.
While this construction is certainly not new and appears similarly for example in \cite{FJM}, the author was unable to trace down a reference for the formulation presented below.
\begin{lemma} \label{lem:local to global}
    Let $\Omega \in \sP(p,p,1)$, where $p \in [1,\infty)$.
    Assume that we are given two functions $v \in L^p(\Omega;\RRM)$, $V \in L^p(\Omega;[0,\infty))$, and a parameter $\theta \in (0,1]$, such that for every pair of concentric cubes
    $Q(x,\theta r) \subset Q(x,r) \subset \subset \Omega$ it holds
    \begin{equation} \label{eq:loc to glob ass}
        \norm{v - (v)_{Q(x,\theta r)}}_{L^p(Q(x,\theta r))} \leq \norm{V}_{L^p(Q(x,r))}.
    \end{equation}
    Then
    \begin{equation} \label{eq:loc to glob}
        \norm{v - (v)_{\Omega}}_{L^p(\Omega)} \leq C(p,\Omega,\theta) \norm{V}_{L^p(\Omega)}.
    \end{equation}
\end{lemma}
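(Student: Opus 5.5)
The plan is a Whitney-type decomposition of $\Omega$ combined with the weighted Poincaré inequality \eqref{eq:weighted poincaré}, in the spirit of \cite{FJM}.

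\Step{1: Whitney cubes and local constants.} Cover $\Omega$ by a Whitney family $\{Q_i\}$ of dyadic cubes with pairwise disjoint interiors, each satisfying $\diam Q_i \sim \dist(Q_i,\del\Omega)$. Next enlarge: let $\hat Q_i$ be the cube concentric with $Q_i$ and of side $\theta^{-1}$ times a fixed dilate of $Q_i$. After rescaling the Whitney family by a constant depending on $\theta$ and $n$, the $\hat Q_i$ are compactly contained in $\Omega$ and have bounded overlap, with a constant $N(n,\theta)$. Applying \eqref{eq:loc to glob ass} to the pair $(\theta\hat Q_i,\hat Q_i)$, where $\theta\hat Q_i\supset Q_i$, gives constants $c_i:=(v)_{\theta\hat Q_i}$ with $\norm{v-c_i}_{L^p(\theta\hat Q_i)}\le \norm{V}_{L^p(\hat Q_i)}$.

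\Step{2: Smooth interpolation.} Take a partition of unity $\{\phi_i\}$ subordinate to slightly enlarged cubes $Q_i^*\subset\theta\hat Q_i$. Arrange that $\sum_i\phi_i=1$ on $\Omega$ and $\abs{D\phi_i}\le C/\diam Q_i\le C/\dist_{\del\Omega}$. Set $w:=\sum_i c_i\phi_i$. Since $\sum_i D\phi_i=0$, on $Q_j$ we have $Dw=\sum_{i}(c_i-c_j)D\phi_i$, where the sum runs over neighbours $i$ of $j$. For neighbouring cubes, $Q_i^*\cap Q_j^*$ contains a cube of measure comparable to $\abs{Q_j}$, so
\[
\abs{c_i-c_j}\le C\abs{Q_j}^{-1/p}\bigl(\norm{v-c_i}_{L^p(\theta\hat Q_i)}+\norm{v-c_j}_{L^p(\theta\hat Q_j)}\bigr).
\]
Multiplying by $\dist_{\del\Omega}\sim\diam Q_j$ cancels the gradient bound, and we obtain
\[
\norm{\dist_{\del\Omega}Dw}_{L^p(Q_j)}\le C\sum_{i\sim j}\norm{V}_{L^p(\hat Q_i)}.
\]
Summing the $p$-th powers over $j$ and using bounded overlap together with the bounded number of neighbours yields $\norm{\dist_{\del\Omega}Dw}_{L^p(\Omega)}\le C\norm{V}_{L^p(\Omega)}$. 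Also, $\norm{v-w}_{L^p(\Omega)}^p\le \sum_j\norm{v-w}^p_{L^p(Q_j)}$, and on $Q_j$ we have $v-w=\sum_i\phi_i(v-c_i)$. This gives $\norm{v-w}_{L^p(\Omega)}\le C\norm{V}_{L^p(\Omega)}$.

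\Step{3: Conclusion.} Apply \eqref{eq:weighted poincaré} componentwise to $w$; this requires $w\in W^{1,p}$, which can be obtained by truncating the sum to finitely many cubes and passing to the limit. We get
\[
\norm{w-(w)_\Omega}_{L^p}\le C(\Omega)\norm{\dist_{\del\Omega}Dw}_{L^p}\le C\norm{V}_{L^p}.
\]
Then by the triangle inequality, and since subtracting the mean is $2$-bounded on $L^p$,
\[
\norm{v-(v)_\Omega}_{L^p}\le 2\norm{v-(w)_\Omega}_{L^p}\le 2\bigl(\norm{v-w}_{L^p}+\norm{w-(w)_\Omega}_{L^p}\bigr),
\]
which is \eqref{eq:loc to glob}.

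\textbf{Expected main obstacle.} The delicate part is the geometric bookkeeping. One must choose the Whitney scaling so that the $\theta^{-1}$-enlargements stay inside $\Omega$, still have bounded overlap, and are such that neighbouring smaller cubes $\theta\hat Q_i$ overlap on a set of comparable measure. Only then do the estimates for $c_i-c_j$ hold with constants depending only on $(n,p,\theta)$. The justification that $w\in W^{1,p}(\Omega)$ is a minor technical point handled by the truncation argument in Step 3.
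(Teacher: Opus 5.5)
Your proposal is correct and follows essentially the same route as the paper. Both use a Whitney cover whose $\theta^{-1}$-enlargements stay in $\Omega$ with bounded overlap, glue local averages with a partition of unity, bound $\dist_{\partial\Omega} Dw$ via $\sum_i D\phi_i = 0$ and overlap estimates for $c_i - c_j$, and finish with the weighted Poincaré inequality and the triangle inequality.

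You are more careful than the paper about $w \in W^{1,p}(\Omega)$, which the paper uses silently. Note, however, that cutting the sum off at an arbitrary finite set leaves uncancelled terms $c_j \sum_{i \in I} D\phi_i$. So either keep all cubes above a fixed size, so that the defect lives in $\{\dist_{\partial\Omega} \lesssim 2^{-k}\}$ and is bounded by $\norm{v}_{L^p}$ there, which tends to zero. Or multiply $w$ by cutoffs $\psi_k$ with $\dist_{\partial\Omega}\abs{D\psi_k} \le C$ and $D\psi_k$ supported in $\{\dist_{\partial\Omega} < 2^{-k}\}$, using $w \in L^p$.
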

\begin{proof}[Proof that \autoref{lem:local to global} implies \autoref{prop:domain invariance}]
Assume that the hypotheses of \autoref{prop:domain invariance} are satisfied.
Then there exist $x_0 \in \RRn$ and $r_0,\theta > 0$ such that
\begin{equation}
    Q(x_0,\theta r_0) \subset U \subset U' \subset Q(x_0,r_0).
\end{equation}
Let $u \in W^{1,p}(\Omega;\RRm)$ and set $v := Du$ and $V:=2C_*\dist_H(Du)$.
Then \eqref{eq:loc to glob ass} follows from the fact that \eqref{eq:int est} is invariant under scaling and translation of the domains $U,U'$.
By \autoref{lem:local to global}, it follows that \eqref{eq:loc to glob} holds, which is by definition of $v$ and $V$ equivalent to \eqref{eq:glob est}.
\end{proof}
\begin{proof}[Proof of \autoref{lem:local to global}]
Set
\begin{equation}
    \lambda \coloneq  \frac{3}{\theta},
\end{equation}
and let $\set{Q_i}_{i \in \NN} = \set{Q(x_i,r_i)}_{i \in \NN}$ be a Whitney cover of $\Omega$, satisfying
with $Q'_i \coloneq  Q(x_i,\lambda r_i)$
\begin{enumerate}
    \item \label{local global(i)} $\overline \Omega = \bigcup_{i \in \NN}\overline{Q_i}$, and the union is disjoint up to null sets;
    \item \label{local global(ii)} $\sum_{i \in \NN} \chi_{Q'_i} \leq N \chi_\Omega$;
    \item \label{local global(iii)} The radii of the cubes are comparable to $\dist_{\del \Omega}(x)$ in the sense that
        \begin{equation} 
            \lambda r_i \leq \dist_{\del \Omega}(x) \leq C\lambda r_i \qquad \forall x \in Q'_i, \; i \in \NN.
        \end{equation}
\end{enumerate}
The constants $N$ and $C$ can be chosen to depend only on $\theta$ and the dimension (cf., e.g., \cite[Ch.~IV]{Stein}).
Furthermore, let $\set{\eta_i}_{i \in \NN}$ be a smooth partition of unity on $\Omega$ satisfying for $i \in \NN$
\begin{enumerate}[resume]
    \item \label{local global(iv)} $\supp \eta_i \subset Q(x_i,2r_i)$;
    \item \label{local global(v)} $\norm{D\eta_i}_{C^0} \leq C r_i^{-1}$.
\end{enumerate}
Now, for $i \in \NN$, set $v_i \coloneq  (v)_{Q(x_i,3r_i)}$ and recall that by \eqref{eq:loc to glob ass} it holds
\begin{equation} \label{eq:BDv-vi}
    \norm{v-v_i}_{L^p(Q(x_i,3r_i))} \leq \norm{V}_{L^p(Q_i')}.
\end{equation}
Let $\tilde v \in C^\infty(\Omega;\RR^M)$ be given by
\begin{equation}
    \tilde v(x)  \coloneq  \sum_{i \in \NN} v_i\eta_i(x),
\end{equation}
and observe that by \ref{local global(ii)}, \ref{local global(iv)} and \eqref{eq:BDv-vi},
\begin{equation} \label{eq:v - tilde v}
    \begin{split}
    \norm{v-\tilde v}^p_{L^p(\Omega)} & \leq N^{p-1}\sum_{i \in \NN} \norm{\eta_i (v-v_i)}_{L^p}^p  \\
    & \leq N^{p-1}\sum_{i \in \NN} \norm{V}_{L^p(Q_i')}^p \leq N^p \norm{V}_{L^p(\Omega)}^p.
    \end{split}
\end{equation}
As a next step, we derive a bound on $D\tilde v$.
To this end, note that if for some $i,j \in \NN$ we have
\begin{equation}
    Q_i \cap Q(x_j,2r_j) \neq \emptyset,
\end{equation}
then by \ref{local global(iii)} and \ref{local global(iv)} it follows
\begin{equation}
    \abs[\big]{Q(x_i,3r_i) \cap Q(x_j,3r_j)} \geq \frac{\max\set{r_i^n,r_j^n}}{C},
\end{equation}
and thus by the triangle inequality and \eqref{eq:BDv-vi},
\begin{equation} \label{eq:vi-vj}
    \begin{split}
    \abs{v_i -v_j}^p & \leq 2^{p-1}\dashint_{Q(x_i,3r_i) \cap Q(x_j,3r_j)}  \abs{v(x) - v_i}^p + \abs{v(x)- v_j}^p \dx \\
    &\leq   C(p)   \Bigl(r_i^{-n}{}\norm{V}^p_{L^p(Q'_i)}+r_j^{-{n}}\norm{V}^p_{L^p(Q_j')}\Bigr).
    \end{split}
\end{equation}
Now let $x \in Q_i$.
Then by \ref{local global(iv)} and since $\sum_j D\eta_j = 0$, we have
\begin{equation}
    D\tilde v(x) = \sum_{j \in \NN:\, x \in Q(x_j,2r_j)} (v_j - v_i)D\eta_j(x),
\end{equation}
so that by \ref{local global(iii)}, \ref{local global(v)} and \eqref{eq:vi-vj} it follows
\begin{equation} \label{eq:bd on tilde v}
    \begin{split}
    \abs{D\tilde v(x)}^p & \leq N^{p-1}  \sum_{j \in \NN:\, x \in Q(x_j,2r_j)} \norm{D\eta_j}_{C^0}^p \abs{v_i-v_j}^p \\
    &\leq C(N,p)  \sum_{j \in \NN:\, x \in Q(x_j,2r_j)} r_j^{-p} \Bigl(r_i^{-n}{}\norm{V}^p_{L^p(Q'_i)}+r_j^{-{n}}\norm{V}^p_{L^p(Q_j')}\Bigr) \\
    & \leq C(N,p)  \sum_{j \in \NN:\, x \in Q(x_j,2r_j)} r_j^{-p-n} \norm{V}^p_{L^p(Q_j')}
    \end{split}
\end{equation}
By \eqref{eq:bd on tilde v} and \ref{local global(iii)}, we thus have
\begin{equation}
    \begin{split}
    \int_{Q_i}\dist_{\del \Omega}(x)^p\abs{D \tilde v (x)}^p\dx \leq C(N,p) \int_{Q_i} \sum_{j:\, x \in Q(x_j,2r_j)} r_j^{-n} \norm{V}^p_{L^p(Q_j')} \dx.
    \end{split}
\end{equation}
Summing over $i$ and exchanging the order of summation, we obtain
\begin{align}
    \int_{\Omega}\dist_{\del \Omega}(x)^p\abs{D \tilde v (x)}^p\dx  &\leq C(N,p) \sum_{j \in \NN} \norm{V}_{L^p(Q_j')}^pr_j^{-n}\int_{ Q(x_j,2r_j)} \sum_{i:\,x \in Q_i} 1 \dx \\
    &\leq C(N,p)\norm{V}_{L^p(\Omega)}^p.
\end{align}
The result now follows from the weighted Poincaré inequality \eqref{eq:weighted poincaré}, which holds by assumption, and from \eqref{eq:v - tilde v}.
\end{proof}

\end{document}